\documentclass[11pt,reqno,a4paper]{amsart}

\usepackage[T1]{fontenc}
\usepackage[english]{babel}
\usepackage{microtype}
\usepackage{textcomp}
\usepackage{fbb}

\usepackage{amsmath, amssymb, amsthm, amscd, amsfonts}
\usepackage{mathtools}
\usepackage{mathrsfs}
\numberwithin{equation}{section}

\usepackage{graphicx}
\usepackage{tikz}
\usetikzlibrary{arrows.meta}
\usepackage{pgfplots}
\pgfplotsset{compat=1.15}

\usepackage{geometry}
\usepackage{fullpage}
\usepackage{setspace}
\usepackage{ragged2e}
\usepackage{wrapfig}
\usepackage{multicol}
\usepackage{float}
\usepackage{caption}
\usepackage{subcaption}
\usepackage{booktabs}
\usepackage{enumerate}
\usepackage{enumitem}
\setlist{
  listparindent=\parindent,
  parsep=0pt,
}

\definecolor{darkblue}{rgb}{0,0,0.7}
\definecolor{darkred}{rgb}{0.7,0,0}
\usepackage[
  colorlinks=true,
  linkcolor=darkred,
  citecolor=darkblue,
  urlcolor=blue,
  breaklinks=true
]{hyperref}
\usepackage{cleveref}

\newcommand{\Mat}{\nu}
\newcommand{\depth}{\operatorname{depth}}

\newcommand{\link}{\operatorname{link}}
\newcommand{\supp}{\operatorname{Supp}}

\newcommand{\cf}{\mathsf{MF}}

\newcommand{\K}{\mathbb{K}}

\DeclareMathOperator{\girth}{girth}

\newtheorem{theorem}{Theorem}[section]
\newtheorem{definition}[theorem]{Definition}

\newtheorem{lemma}[theorem]{Lemma}
\newtheorem{proposition}[theorem]{Proposition}
\newtheorem{corollary}[theorem]{Corollary}
\newtheorem{example}[theorem]{Example}
\newtheorem{observation}[theorem]{Observation}
\newtheorem{remark}[theorem]{Remark}

\newtheorem{conj}[theorem]{Conjecture}
\newtheorem{setup}[theorem]{Setup}
\newtheorem{notation}[theorem]{Notation}
\newtheorem*{notation*}{Notation}
\newtheorem{AN}{Assumptions and Notation}

\begin{document}

\title[Cohen-Macaulayness of squarefree powers of edge ideals of whisker graphs]{Cohen-Macaulayness of squarefree powers of edge ideals of whisker graphs}

\author{Rakesh Ghosh}
\email{rakeshghosh1591@gmail.com}

\author{S Selvaraja}
\email{selvas@iitbbs.ac.in}
\address{Department of Mathematics, Indian Institute Of Technology  Bhubaneswar, Bhubaneswar, 752050, India}

\thanks{AMS Classification 2020: 05E45, 05C70, 13F55, 13C14.}
\keywords{Edge ideal, squarefree  power, matching-free complex, whisker graph, shellability, Cohen-Macaulay ring, depth, girth}

\begin{abstract}

Let $G$ be a finite simple graph with edge ideal $I(G)$. For $q\ge 1$, the $q$-th squarefree power $I(G)^{[q]}$ is generated by products of $q$ pairwise disjoint edges of $G$. It is the Stanley-Reisner ideal of a simplicial complex $\cf^q(G)$, called the $q$-matching-free complex, whose faces are those subsets $F\subseteq V(G)$ for which the induced subgraph $G[F]$ contains no matching of size $q$.

We study $\cf^q(G)$ when $G=W(H)$ is a whisker graph. We first characterize purity. If $H$ is bipartite, then $\cf^q(G)$ is pure for all $q$. Otherwise, let $\ell$ denote the length of the smallest odd cycle of $H$ and set $n=|V(H)|$. Then $\cf^q(G)$ is pure if and only if
$q<\lceil \ell/2\rceil$
or
$q>n-\lfloor \ell/2\rfloor.$
We next determine the exact range of shellability. Let $m=\girth(H)$, with $m=\infty$ if $H$ is acyclic. Then $\cf^q(G)$ is shellable for
\[
1\le q\le
\begin{cases}
\lceil m/2\rceil, & \text{if } m<\infty,\\
\nu(G), & \text{if } m=\infty.
\end{cases}
\]
Consequently, $I(G)^{[q]}$ is Cohen-Macaulay for $1\le q\le\lfloor m/2\rfloor$ when $m<\infty$, and for all $1\le q\le\nu(G)$ when $m=\infty$. If $m$ is odd, then $I(G)^{[q]}$ is sequentially Cohen-Macaulay for $q=\lceil m/2\rceil$.
We further obtain extremal characterizations: $\cf^{2}(G)$ is Cohen-Macaulay if and only if $H$ has no induced $3$-cycle, and $\cf^{\,n-1}(G)$ is Cohen-Macaulay if and only if $H$ is acyclic.
Finally, we compute the depth of $I(G)^{[q]}$ for whisker graphs and verify a conjecture from~\cite{DRS24} on the depth of squarefree powers of whisker cycles in the relevant range.

\end{abstract}

\maketitle
\section{Introduction}
Let $\K$ be a field and let $\Delta$ be a simplicial complex on the finite vertex set 
$V=\{x_1,\dots,x_n\}$. The \emph{Stanley-Reisner ideal} of $\Delta$ is the squarefree monomial ideal
\[
I_{\Delta}
=
\big(
\prod_{x_i \in F} x_i 
\;\big|\;
F \subseteq V,\; F \notin \Delta
\big)
\subseteq R=\K[x_1,\dots,x_n].
\]
The quotient ring $R/I_{\Delta}$ is called the \emph{Stanley-Reisner ring} of $\Delta$. 
We say that $\Delta$ (equivalently, $I_{\Delta}$) is (sequentially) Cohen-Macaulay over $\K$ if $R/I_{\Delta}$ is a (sequentially) Cohen-Macaulay ring.

A central problem in combinatorial commutative algebra is to determine when Stanley-Reisner ideals and their powers are Cohen-Macaulay. A classical theorem of Cowsik and Nori~\cite{CN76} states that $I_{\Delta}$ is a complete intersection if and only if $I_{\Delta}^q$ is Cohen-Macaulay for all $q \ge 1$, which occurs precisely when $\Delta$ is a disjoint union of simplices. Later, Terai and Trung~\cite{TT12} proved that if $I_{\Delta}^q$ is Cohen-Macaulay for some $q \ge 3$, then $I_{\Delta}$ must again be a complete intersection. Thus, for ordinary powers, the Cohen-Macaulay property imposes very strong structural restrictions.

Recently, squarefree powers have attracted considerable attention. 
For a squarefree monomial ideal $I \subseteq R$ and an integer $q \ge 1$, 
the \emph{$q$-th squarefree power} of $I$ is defined by
$I^{[q]}=\big( m \in I^q \mid m \text{ is squarefree} \big).$
Clearly, $I^{[q]}=0$ for $q \gg 0$. 
The study of squarefree powers was initiated in~\cite{BHZ18} 
and further developed in~\cite{EHHM22}. 
Since then, the subject has been intensively studied; see, for example,
\cite{CF26,CFE25,DRS24,DRS25,EF26,EHHS24,EH21,EH25,FM24,HS25,Sey24,Fa25}.

In this paper, we study squarefree powers of edge ideals of graphs, 
which form a distinguished subclass of Stanley-Reisner ideals. 
Let $G$ be a finite simple graph with vertex set $V(G)=\{x_1,\dots,x_n\}$ 
and edge set $E(G)$. The \emph{edge ideal} of $G$ is defined by
$I(G)=\big( x_i x_j \mid \{x_i,x_j\}\in E(G)\big) \subseteq \K[x_1,\ldots,x_n],$
and coincides with the Stanley-Reisner ideal of the 
\emph{independence complex} of $G$, whose faces are the independent 
subsets of $V(G)$.
A subset $M=\{e_1,\dots,e_k\}\subseteq E(G)$ is a \emph{matching} 
if $e_i \cap e_j=\emptyset$ for all $i\neq j$. 
The maximum size of a matching in $G$ is the \emph{matching number}, 
denoted by $\nu(G)$. For $q\ge1$, the \emph{$q$-th squarefree power} 
of $I(G)$ is
\[
I(G)^{[q]}
=
\bigl(
\prod_{x \in e_1 \cup \cdots \cup e_q} x
\;\big|\;
e_1,\ldots,e_q \text{ are pairwise disjoint edges of } G
\bigr).
\]
In particular, $I(G)^{[q]}=0$ for all $q>\nu(G)$.

The Cohen-Macaulay property of squarefree powers of edge ideals has been intensively studied in recent years. 
A central objective has been the classification of graphs $G$ for which every squarefree power $I(G)^{[q]}$ is Cohen-Macaulay. 
It was shown in~\cite{DRS24} that if $G$ is a Cohen-Macaulay forest—i.e., $G$ is a forest and $I(G)$ is Cohen-Macaulay—then $I(G)^{[q]}$ is Cohen-Macaulay for all $1 \leq q \leq \nu(G)$. 
Extending this result to simplicial complexes, the authors of~\cite{DRS25} proved that if $\Delta$ is a Cohen-Macaulay simplicial forest, then all squarefree powers of its facet ideal are Cohen-Macaulay.
A complete classification of graphs whose squarefree powers are Cohen-Macaulay and have linear resolutions was obtained in~\cite{CF26}. 
Subsequently, Ficarra and Moradi~\cite{FM24} characterized, within the important classes of chordal graphs, Cameron-Walker graphs, and very well-covered graphs, those graphs for which $I(G)^{[q]}$ is Cohen-Macaulay for all $1 \leq q \leq \nu(G)$. 
In the same work, they established a criterion for the extremal case: the squarefree power $I(G)^{[\nu(G)]}$ is Cohen-Macaulay if and only if either $G$ has a perfect matching, or for every vertex $x_i\in V(G)$ the induced subgraph $G\setminus\{x_i\}$ admits a perfect matching of size $\frac{|V(G)|-1}{2}$.
Taken together, these results show that graphs for which all squarefree powers are Cohen-Macaulay form a highly restricted class. 
This naturally leads to a more refined problem: for a fixed graph $G$, determine the precise range of integers $q\ge1$ for which $I(G)^{[q]}$ is Cohen-Macaulay. 
To the best of our knowledge, no general results addressing this range problem are currently available.

In this paper, we initiate a systematic study of this problem for whisker graphs. 
Our main result establishes an explicit upper bound on $q$, expressed in terms of the girth of the underlying base graph, which guarantees the Cohen-Macaulayness of $I(G)^{[q]}$.

From a combinatorial perspective, the algebraic properties of $I(G)^{[q]}$ are encoded in the topology of its associated Stanley-Reisner complex. 
Since $I(G)^{[q]}$ is squarefree, it arises as the Stanley-Reisner ideal of a simplicial complex, which has been studied in~\cite{EHHS24} and~\cite{FHH23}. 
We denote this complex by $\cf^q(G)$ and refer to it as the \emph{$q$-matching-free complex} of $G$:
\[
\cf^q(G)
=
\bigl\{
F \subseteq V(G)
\;\big|\;
G[F] \text{ contains no matching of size } q
\bigr\}.
\]
By construction, we have $I_{\cf^q(G)} = I(G)^{[q]}$ for all $1 \le q \le \nu(G)$ (see Proposition~\ref{SR}). 
In the case $q = 1$, the complex $\cf^1(G)$ coincides with the independence complex of $G$, a classical and well-studied object in topological combinatorics~\cite{J08}.
For $q \ge 2$, however, the topology and combinatorics of $\cf^q(G)$ remain largely unexplored and provide the geometric framework for the present work.

Our investigation is motivated by a classical result concerning whisker graphs. Let $H$ be a graph and let $W(H)$ denote its whisker graph. Villarreal~\cite{vill_cohen} proved that the independence complex $\cf^1(W(H))$ is Cohen-Macaulay. This was subsequently strengthened by Dochtermann and Engstr\"om~\cite{DochEng09}, who showed that $\cf^1(W(H))$ is in fact pure vertex-decomposable, and hence pure shellable (see Section~\ref{pre} for definitions). These notions are related by the standard implications: vertex-decomposability implies shellability, which in turn implies sequential Cohen-Macaulayness. In the pure case, both vertex-decomposability and shellability imply Cohen-Macaulayness. 

Let $H$ be a graph and set $G = W(H)$. The \emph{girth} of $H$, denoted by $\girth(H)$, is the length of its shortest cycle. If $H$ is acyclic, we adopt the convention that $\girth(H)=\infty$. Throughout the introduction, we write $m=\girth(H)$; thus $m \ge 3$ whenever $H$ contains a cycle.
Our first main result provides a complete characterization of the purity of the 
$q$-matching-free complex $\cf^q(G)$, equivalently, of the unmixedness of the squarefree power $I(G)^{[q]}$.

\begin{theorem}[Theorem~\ref{pure-range}]\label{into1}
Let $G = W(H)$ be a whisker graph with $|V(H)| = n$. 
If $H$ has no odd cycle, then $\cf^q(G)$ is pure. 
Otherwise, let $\ell$ be the length of the smallest odd cycle in $H$. 
Then $\cf^q(G)$ is pure if and only if
$1 \le q < \Big\lceil \frac{\ell}{2} \Big\rceil$ 
or
$ n - \Big\lfloor \frac{\ell}{2} \Big\rfloor<q \leq \nu(G)$
\end{theorem}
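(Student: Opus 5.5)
We work throughout with $G=W(H)$, where $V(H)=\{x_1,\dots,x_n\}$ and $y_i$ is the whisker attached to $x_i$. Then $\nu(G)=n$, and the faces of $\cf^q(G)$ are the sets $F$ with $\nu(G[F])\le q-1$. For a face $F$ we partition $[n]$ into three sets: $A$, the indices with $x_i,y_i\in F$; $B$, the indices with exactly one of $x_i,y_i$ in $F$; and $C$, the indices with neither in $F$. We put $k=|A|$.

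The first step is a structural lemma on facets. If $i\in C$, then $y_i$ is isolated in $G[F\cup\{y_i\}]$, so adding it does not increase the matching number. Hence every facet has $C=\emptyset$, and so $|F|=n+k$. The whisker edges $x_iy_i$ with $i\in A$ form a matching, so $k\le\nu(G[F])\le q-1$. Moreover, taking one vertex from every pair together with $q-1$ full pairs gives a face of size $n+q-1$ in every case. Therefore $\cf^q(G)$ is pure if and only if every facet $F$ has $k=q-1$, i.e. $\nu(G[F])=k$ in the sense that no maximum matching needs edges of $H$ beyond the whiskers.

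\textbf{Non-purity in the middle range.} Let $C_\ell$ be a shortest odd cycle, with $\ell=2r+1$; it is necessarily chordless. Fix $k$ with $0\le k\le n-\ell$, and choose $k$ indices outside $C_\ell$ to form $A$. Define
\[
F=V(C_\ell)\cup\{x_i,y_i: i\in A\}\cup\{y_j: j\notin A,\ x_j\notin C_\ell\}.
\]
This set has $n+k$ vertices, and the non-isolated part of $G[F]$ has $2r+1+2k$ vertices, so $\nu(G[F])=r+k$. We set $q=r+k+1$. The face $F$ is maximal:
\begin{itemize}
\item Adding $y_i$ for $x_i\in C_\ell$ creates the whisker edge $x_iy_i$. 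Since $C_\ell-x_i$ is a path on $2r$ vertices with a perfect matching, the matching number rises to $r+k+1$.
\item Adding $x_j$ for $j$ with $y_j\in F$ creates a new disjoint edge $x_jy_j$, again raising the matching number.
\end{itemize}
Thus $F$ is a facet of size $n+k<n+q-1$. As $k$ ranges over $0,\dots,n-\ell$, the value $q$ ranges over $\lceil \ell/2\rceil\le q\le n-\lfloor \ell/2\rfloor$, so $\cf^q(G)$ is not pure there.

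\textbf{Purity outside that range.} Let $F$ be a facet with $\nu(G[F])=q-1>k$; we aim for a contradiction when $H$ is bipartite or $q$ lies outside the range above. We apply the Gallai--Edmonds decomposition $D\cup A'\cup C'$ to $G[F]$. For each $j\in B$, maximality says that adding the missing vertex of the pair raises $\nu$. Two cases arise:
\begin{itemize}
\item If $y_j\in F$, then $y_j$ is isolated and hence in $D$. Adding $x_j$ raises $\nu$ exactly when $x_j$ can be matched to $y_j$ alongside a maximum matching of $G[F]$, which is automatic.
\item If $x_j\in F$, adding $y_j$ raises $\nu$ if and only if $x_j\notin A'\cup C'$, i.e. $x_j\in D$.
\end{itemize}
Since $\nu>k$, some maximum matching uses an $H$-edge, and an alternating-path exchange against the whisker matching produces an $M$-alternating structure among the $x$'s. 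Together with the fact that all such $x_j$ lie in $D$, this forces a non-trivial factor-critical odd component of $D$ contained in $H$. Such a component contains an odd cycle of $H$. This is already impossible when $H$ is bipartite. Otherwise the component has at least $\ell$ vertices and contributes at least $r$ edges to the matching, which gives $q-1\ge r$, i.e. $q\ge\lceil\ell/2\rceil$. The same component also uses at least $\ell$ indices that cannot belong to $A$, so $k\le n-\ell$, which yields the upper bound $q\le n-\lfloor\ell/2\rfloor$ after counting $\nu(G[F])\le k+(\text{deficiency contributions})$.

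The main obstacle is this last step: making the Gallai--Edmonds argument rigorous. In particular, we must show that $\nu>k$ genuinely forces an odd component of $D$ lying inside $H$, and that the count of its contribution gives exactly $q-1\le n-\lfloor\ell/2\rfloor-1$ rather than a weaker bound. I would first try a direct augmenting-path argument relative to the whisker matching on $A$, and fall back on the full Gallai--Edmonds structure only if that fails.
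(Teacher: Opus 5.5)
Your reduction (every facet meets each pair $\{x_i,y_i\}$, so $|F|=n+k$, and purity is equivalent to $k=q-1$ for every facet) is correct. So is your explicit facet built on a shortest odd cycle: it is a cleaner form of the paper's facet $\bar f$ and settles non-purity for $\lceil\ell/2\rceil\le q\le n-\lfloor\ell/2\rfloor$. (Minor point: in the face of size $n+q-1$, the vertex taken from each remaining pair should be $y_j$, since arbitrary choices can create extra $H$-edges.)

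The genuine gap is the converse, i.e.\ purity for bipartite $H$ and for $q$ outside that range, which is half of the theorem. You only sketch it, and its two load-bearing claims are asserted rather than proved: that $\nu(G[F])>k$ forces a non-trivial factor-critical component of $D$ inside $H$, and that a count then yields $q\le n-\lfloor\ell/2\rfloor$. In particular $k\le n-\ell$ by itself says nothing about $q$, because you never bound $\nu(G[F])$ in terms of $k$.

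The missing ingredient is the identity $\nu(G[F])=k+\nu(H[X_B])$, where $X_A=\{x_i: i\in A\}$ and $X_B=\{x_j\in F: y_j\notin F\}$. Every edge of $G[F]$ not lying in $H[X_B]$ meets $X_A$, and conversely the $k$ whisker edges of $A$ extend any matching of $H[X_B]$. With this identity no alternating-path exchange is needed:
\begin{enumerate}
\item If $q-1>k$, then $H[X_B]$ has an edge.
\item No vertex of $X_A$ lies in $D(G[F])$, since a maximum matching missing $x_i$ also misses the pendant $y_i$ and could be enlarged. Your maximality criterion puts all of $X_B$ in $D$. So the non-trivial components of $G[F][D]$ are exactly the components of $H[X_B]$ with an edge, and by Gallai--Edmonds each is factor-critical.
\item Such a component $K$ has at least $3$ vertices, hence is non-bipartite and contains an odd cycle of $H$. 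This is impossible if $H$ is bipartite; otherwise $|X_B|\ge|K|\ge\ell$.
\item Then $q-1\ge\nu(K)=(|K|-1)/2\ge\lfloor\ell/2\rfloor$, and since $k+|X_B|\le n$,
\[
q-1=k+\nu(H[X_B])\le (n-|X_B|)+\lfloor |X_B|/2\rfloor=n-\lceil |X_B|/2\rceil\le n-\lceil \ell/2\rceil .
\]
\end{enumerate}
Hence $\lceil\ell/2\rceil\le q\le n-\lfloor\ell/2\rfloor$, the required contradiction.

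For comparison, the paper proves this direction differently. It fixes a $(q-1)$-matching $f\subset F$ with $m$ edges in $H$ and shows $|F\cap Y|=m$ via even-connections when $m<\lfloor\ell/2\rfloor$ or $H$ is bipartite.
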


Our second main result establishes the shellability of the matching-free complex $\cf^{q}(G)$ in a range determined by the girth of the base graph $H$. In particular, the homological behavior of these complexes is governed by the induced cycle structure of $H$. Moreover, by \cite{FM24}, the top matching-free complex $\cf^{\nu(G)}(G)$ is Cohen-Macaulay.

\begin{theorem}[Theorem~\ref{main}]\label{intro2}
Let $G = W(H)$. Then $\cf^q(G)$ is shellable for all
\[
1\le q\le
\begin{cases}
\lceil m/2\rceil, & \text{if } m<\infty,\\
\nu(G), & \text{if } m=\infty.
\end{cases}
\]
\end{theorem}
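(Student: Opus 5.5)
We will prove Theorem~\ref{intro2} by understanding the facets of $\cf^q(G)$ explicitly and then ordering them by a shelling adapted to the whisker structure. Write $V(H)=\{x_1,\dots,x_n\}$ and let $y_i$ be the whisker vertex attached to $x_i$, so $V(G)=\{x_1,\dots,x_n,y_1,\dots,y_n\}$. For $F\subseteq V(G)$, the matching number of $G[F]$ is determined as follows. Let $A=\{i: x_i,y_i\in F\}$ be the set of ``full pairs'' and let $B=\{x_i\in F: y_i\notin F\}$. Then $\nu(G[F])=|A|+\nu(H[B])$. Indeed, every whisker $x_iy_i$ with $i\in A$ can be used, and exchanging an $H$-edge at $x_i$ for the whisker at $x_i$ never decreases the matching. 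Consequently $F\in\cf^q(G)$ if and only if $|A|+\nu(H[B])\le q-1$. For the facets, we may assume that $F$ contains $y_j$ for every $j$ with $x_j\notin F$, since adding such a $y_j$ creates only an isolated vertex. So a facet is determined by a pair $(A,B)$ of disjoint subsets of $[n]$ with $|A|+\nu(H[B])=q-1$, maximal in the appropriate sense. Its vertex set is $\{x_i,y_i:i\in A\}\cup\{x_i: i\in B\}\cup\{y_j: j\notin A\cup B\}$.

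The girth hypothesis enters through the following observation. If $q\le\lceil m/2\rceil$, then $|B|$ may be taken small: any set $B$ with $\nu(H[B])\le q-2$ and $|B|\le 2q-3<m$ induces a forest. More relevantly, maximality forces $H[B]$ to have a structure we can control. We expect the facets to split into two kinds. In the first, $B=\emptyset$ and $|A|=q-1$, giving the ``Villarreal-type'' facets of size $n+q-1$. In the second, $B\neq\emptyset$ is such that $H[B]$ is a maximal set with prescribed matching number. When $m=\infty$ (so $H$ is a forest) or $q\le\lceil m/2\rceil$, all the relevant small induced subgraphs $H[B\cup\{x\}]$ are forests. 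Forests are exactly the setting in which the matching-number bookkeeping behaves well under the exchange arguments below.

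The plan is to avoid writing down an explicit shelling order and instead prove the stronger statement that $\cf^q(G)$ is vertex decomposable, using the shedding-vertex recursion:
\[
\link_{\cf^q(G)}(y_i)\cong \cf^q(G')\ \text{with whisker }x_i\text{ forced},\qquad \cf^q(G)\setminus y_i=\cf^q(G\setminus y_i).
\]
We will carry out induction on $n$ and $q$ simultaneously, within a slightly enlarged class: whisker graphs of $H$ with some base vertices ``unwhiskered'' or ``pre-matched''. This class needs to be closed under deletion and link. The link of $y_i$ consists of faces $F\ni y_i$, and the condition on $F\setminus\{y_i\}$ depends on whether $x_i\in F$. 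Taking the link of $x_i$ and then deleting $y_i$ reduces $q$ by one on $G\setminus\{x_i,y_i\}$. This is where the whisker structure pays off. The girth parameter passes down, since induced subgraphs of $H$ have girth at least $m$. It also decreases compatibly, since $q-1\le\lceil m/2\rceil$ still holds.

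The main obstacle is verifying the shedding condition. We must show that no facet of $\cf^q(G)\setminus y_i$ is a facet of the link of $y_i$; equivalently, for every facet $F$ of the deletion, $F\cup\{y_i\}$ contains a $q$-matching. Take $F$ with $y_i\notin F$. If $x_i\notin F$, adding $y_i$ is harmless, so such an $F$ was not a facet. If $x_i\in F$, adding $y_i$ turns $x_i$ from a $B$-vertex into an $A$-vertex, and the matching number rises exactly when $x_i$ is covered by every maximum matching of $H[B]$. We must choose $i$ so that this always happens for facets. The plan is to pick $x_i$ as a leaf, or more generally a vertex of minimal degree, in $H$, and use Gallai--Edmonds type reasoning. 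The danger case is an odd cycle of length at most $2q-1$ inside $B$, where a vertex can be missed by some maximum matching; the bound $q\le\lceil m/2\rceil$ (so $2q-1\le m$, with only the cycle of length exactly $m$ itself possible) is exactly what excludes or isolates this case. Handling that boundary case $q=\lceil m/2\rceil$ with $m$ odd, where the complex is non-pure, is what I expect to need a separate argument. For that case I would fall back to an explicit nonpure shelling that orders facets by decreasing size.
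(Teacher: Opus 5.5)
\textbf{The central step fails: $y_i$ is not a shedding vertex.} Your dismissal of the case $x_i\notin F$ confuses facets of the deletion $\cf^q(G)\setminus y_i=\cf^q(G\setminus y_i)$ with facets of $\cf^q(G)$. A facet $F$ of the deletion can omit $x_i$ precisely because adding $x_i$ would complete a $q$-matching through an $H$-neighbour. Then $y_i$ is isolated in $G[F\cup\{y_i\}]$, so $F\cup\{y_i\}\in\cf^q(G)$. Thus $F$ is a face of $\link_{\cf^q(G)}(y_i)$ that is also a facet of the deletion, which is exactly what the shedding condition forbids.

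Two small examples show this. For $q=1$ and $H=K_2$, $\{x_2\}$ is a facet of $\cf^1(G\setminus y_1)$, while $\{x_2,y_1\}\in\cf^1(G)$. For $q=2$ and $H$ the path $x_1x_2x_3$ (so $x_1$ is a leaf and $m=\infty$), $F=\{x_2,x_3,y_3\}$ is a facet of $\cf^2(G\setminus y_1)$: adding $x_1$ (resp.\ $y_2$) creates the $2$-matching $\{\{x_1,x_2\},\{x_3,y_3\}\}$ (resp.\ $\{\{x_2,y_2\},\{x_3,y_3\}\}$). Yet $F\cup\{y_1\}\in\cf^2(G)$.

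The failure is general. If $x_j$ is an $H$-neighbour of $x_i$ and $q\le n-1$, take a $(q-1)$-matching $M$ of $G\setminus\{x_i,y_i,x_j,y_j\}$. Any facet of $\cf^q(G\setminus\{x_i,y_i\})$ containing $\{x_j\}\cup\supp(M)$ is such a bad facet. So choosing $x_i$ to be a leaf or of minimum degree does not help.

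Your criterion in the case $x_i\in F$ is also inverted. Turning $x_i$ from a $B$-vertex into an $A$-vertex replaces $|A|+\nu(H[B])$ by $|A|+1+\nu(H[B\setminus\{x_i\}])$. This increases exactly when some maximum matching of $H[B]$ \emph{misses} $x_i$. So odd cycles are the harmless configuration, not the dangerous one.

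\textbf{What could replace it, and what is still missing.} The vertex that is always shedding is $x_i$ itself. Every facet of $\cf^q(G\setminus x_i)$ contains the isolated vertex $y_i$ and has matching number $q-1$, so adding $x_i$ creates a $q$-matching; the deletion is the cone $2^{\{y_i\}}*\cf^q(W(H\setminus x_i))$. But then the link $\link_{\cf^q(G)}(x_i)=\{F\subseteq V(G)\setminus\{x_i\}\mid \nu(G[F\cup\{x_i\}])\le q-1\}$ is not covered by any inductive hypothesis you have formulated. Decomposing it is exactly where the girth bound would have to enter.

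The rest of the plan is also not carried out:
\begin{itemize}
\item Your ``enlarged class'' is never defined.
\item Its closure under links and deletions, and the existence of shedding vertices in it, are never checked.
\item The boundary case ($m$ odd, $q=\lceil m/2\rceil$) is deferred to an unspecified nonpure shelling.
\item You aim at vertex decomposability of $\cf^q(G)$ itself, which is stronger than what the paper establishes and would need its own proof.
\end{itemize}

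The idea you are missing is to shed \emph{faces} rather than vertices. The paper removes, one at a time, the supports $\mu_k$ of the $(q-1)$-matchings of $G\setminus x_1$, ordered so that matchings with fewer whisker edges come first. Each $\mu_k$ is a shedding face. Via even-connections (Theorem~\ref{Thm-even}), its link is the independence complex $\cf^1(H_k\setminus S_k)$ with $H_k=G^{e_{k,1}\cdots e_{k,q-1}}$. The bound $q-1<m/2$ then yields simplicial whisker vertices, hence vertex decomposability of these links (Lemma~\ref{lm:Bvd}, Theorem~\ref{vd-even}). The residual complex $\cf^{q-1}(G\setminus\{x_1,y_1\})*2^{\{x_1,y_1\}}$ is shellable by induction on $q$. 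This covers the nonpure boundary case with no separate argument.
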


Combining Theorems~\ref{into1} and~\ref{intro2}, we obtain a precise description of the Cohen-Macaulay behavior of the squarefree powers $I(G)^{[q]}$, equivalently of the matching-free complexes $\cf^{q}(G)$:
\[
\cf^{q}(G)=
\begin{cases}
\text{Cohen-Macaulay}, 
& \text{if } 1\le q\le \lfloor m/2\rfloor,\\
\text{sequentially Cohen-Macaulay but not pure}, 
& \text{if } m \text{ is odd and } q=\lceil m/2\rceil,\\
\text{Cohen-Macaulay}, 
& \text{if } m=\infty \text{ and } 1\le q\le \nu(G).
\end{cases}
\]

The extremal cases admit particularly clean characterizations. In fact, the Cohen-Macaulayness of the second and the $(n-1)$-st matching-free complexes is completely determined by the induced cycle structure of $H$.

\begin{theorem}[Theorem~\ref{cm-chra}]
Let $G=W(H)$ be a whisker graph.
\begin{enumerate}
    \item The complex $\cf^{2}(G)$ is Cohen-Macaulay if and only if $H$ does not contain an induced cycle of length $3$.
    
    \item We have $m=\infty$ if and only if $\cf^{\,n-1}(G)$ is Cohen-Macaulay.
\end{enumerate}
\end{theorem}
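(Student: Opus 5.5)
Both parts follow from the purity characterization (Theorem~\ref{into1}) and the shellability range (Theorem~\ref{intro2}), except for one direction of part (2), which needs a separate non-Cohen--Macaulay argument. I use two standard facts throughout: a pure shellable complex is Cohen--Macaulay, and a Cohen--Macaulay complex is pure. I also use $\nu(G)=n$ for $G=W(H)$, since the whiskers form a perfect matching.

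\emph{Part (1).} In a simple graph every $3$-cycle is induced, so ``no induced $3$-cycle'' just means $m\ge 4$ or $m=\infty$.
\begin{itemize}
\item If $m=\infty$, Theorem~\ref{intro2} gives that $\cf^2(G)$ is shellable. Theorem~\ref{into1} gives that it is pure, because $H$ has no odd cycle. Hence it is Cohen--Macaulay.
\item If $4\le m<\infty$, then $2\le\lfloor m/2\rfloor\le\lceil m/2\rceil$, so $\cf^2(G)$ is shellable by Theorem~\ref{intro2}.
 \begin{itemize}
 \item For purity: if $H$ has no odd cycle, Theorem~\ref{into1} gives it directly.
 \item Otherwise the smallest odd cycle has length $\ell\ge 5$, since $H$ has no triangle. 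Then $2<3\le\lceil \ell/2\rceil$, and purity again follows from Theorem~\ref{into1}.
 \end{itemize}
 So $\cf^2(G)$ is pure and shellable, hence Cohen--Macaulay.
\item Conversely, suppose $H$ contains a triangle. Then $\ell=3$, and $q=2$ is not $<\lceil 3/2\rceil=2$. Also $2>n-1$ would force $n\le 2$, which is impossible when $H$ has a triangle. By Theorem~\ref{into1}, $\cf^2(G)$ is not pure, so it is not Cohen--Macaulay.
\end{itemize}

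\emph{Part (2), forward direction.} If $m=\infty$, then $n-1\le \nu(G)=n$. Theorem~\ref{intro2} gives shellability of $\cf^{n-1}(G)$ and Theorem~\ref{into1} gives purity. Hence $\cf^{n-1}(G)$ is Cohen--Macaulay.

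\emph{Part (2), reverse direction.} Here purity alone will not suffice: when the smallest odd cycle has length $\ell\ge5$, or when $H$ is bipartite with a cycle, Theorem~\ref{into1} says $\cf^{n-1}(G)$ is pure. So I will use Reisner's criterion. The plan is to exhibit a face $\sigma$ whose link is not Cohen--Macaulay.

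\emph{Choice of the face.} Let $C$ be a shortest cycle of $H$, with vertex set $\{v_1,\dots,v_k\}$. A shortest cycle is induced, so $H[C]\cong C_k$. Let $\sigma$ consist of the whisker leaves $y_v$ for $v\notin C$, together with suitable further vertices outside $W(C)$. The goal is that for $F\subseteq V(W(C))$, adding $\sigma$ raises the matching number by exactly $n-k$. Then
\[
\link_{\cf^{n-1}(G)}(\sigma)\;\cong\;\cf^{k-1}(W(C_k)),
\]
possibly after coning off vertices.

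The delicate point is that vertices of $\sigma$ might be matched into $F$ through edges of $H$ and so increase the matching number too much. To prevent this, I would choose $\sigma$ so that every vertex outside $W(C)$ is either saturated by whisker edges inside $\sigma$ or absent. For instance, take the full whisker pairs $\{x_v,y_v\}$ for $v\notin C$, and then check that any extra matching edge leaving $\sigma$ can be exchanged for the whisker edge $x_vy_v$ without loss.

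\emph{Non-Cohen--Macaulayness of the cycle case.} This reduces the claim to showing that $\cf^{k-1}(W(C_k))$ is not Cohen--Macaulay for every $k\ge3$.

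A first computation already shows non-purity in some cases. Deleting $x_i,x_j$ for non-adjacent $v_i,v_j$ leaves matching number $k-2$, because $y_i,y_j$ become isolated. This produces faces of size $2k-2$. On the other hand, deleting $y_i,y_j$ together with a third suitable vertex gives facets of size $2k-3$, whenever $x_i,x_j$ can still be used in a matching of size $k-1$.

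If purity happens to hold for some $k$, I would instead use Hochster's formula. Concretely, I would find a further link, say of all but three whisker pairs, that is a $1$-dimensional complex and is disconnected. Alternatively, I would show via the Eagon--Reiner theorem that the Alexander dual of $\cf^{k-1}(W(C_k))$ fails to have a linear resolution.

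\emph{Main obstacle.} I expect the hardest step to be this cycle computation, uniformly in $k$ and in both parities, together with verifying that the link reduction really produces $\cf^{k-1}(W(C_k))$.
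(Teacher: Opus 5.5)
Your Part (1) and the forward direction of Part (2) are correct and coincide with the paper's argument, which cites Corollary~\ref{main-cor} (combining Theorems~\ref{into1} and~\ref{intro2}). Your link reduction is also sound. With $\sigma=\{x_v,y_v : v\notin C\}$, exchanging at the leaves $y_v$ gives $\nu(G[F\cup\sigma])=\nu(G[F])+n-k$, so the link is exactly $\cf^{k-1}(W(C_k))$, with no coning.

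\textbf{The gap.} You never prove the decisive step: that $\cf^{k-1}(W(C_k))$ is not Cohen--Macaulay for $k\ge 4$. None of the routes you sketch reaches it.
\begin{enumerate}
\item For $k\ge4$ this complex is pure by Theorem~\ref{into1}. So facets of size $2k-3$ do not exist, and your non-purity computation only covers $k=3$.
\item The link of $k-3$ whisker pairs is $\cf^{2}$ of a whiskered forest on three vertices. That complex is $3$-dimensional and Cohen--Macaulay, so it detects nothing.
\item For $k\ge5$, no $1$-dimensional link is disconnected. Every facet has the form $V\setminus\{u,v\}$, so a $1$-dimensional link is the link of a face whose complement $U$ has four vertices. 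Such a link is disconnected only if $U$ induces a $4$-cycle in the graph $\overline{T}$ of pairs whose removal still leaves $k-1$ disjoint edges. The edges of $\overline{T}$ are $\{x_i,y_i\}$ and $\{y_i,y_j\}$ for $x_ix_j\in E(C_k)$. Hence $\overline{T}\cong W(C_k)$, which has no induced $4$-cycle when $k\ge5$.
\end{enumerate}

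\textbf{The missing idea.} It is this graph $\overline{T}$, and it is exactly the paper's argument. Once $m\ge4$, $\cf^{n-1}(G)$ is pure. By Proposition~\ref{facet-complement}, its facets are the complements of the non-edges of $\overline{T}$, so $I_{\cf^{n-1}(G)^\vee}=I(T)$. By Eagon--Reiner and Fr\"oberg, Cohen--Macaulayness is then equivalent to chordality of $\overline{T}\cong W(H)$. This fails because $H$ has an induced cycle of length $m\ge4$. The argument works on $G$ directly, so no link reduction is needed.

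If you prefer to stay with Reisner's criterion, use the face $\{x_1,\dots,x_k\}$ of $\cf^{k-1}(W(C_k))$ instead. Its link consists of those $B\subseteq\{y_1,\dots,y_k\}$ whose complement in $\{y_1,\dots,y_k\}$ contains some $y_i,y_j$ with $x_ix_j\notin E(C_k)$. This is the Alexander dual of $C_k$, viewed as a $1$-dimensional complex on $\{y_1,\dots,y_k\}$. Hence $\widetilde{H}_{k-4}$ of the link is nonzero, while the link has dimension $k-3$, contradicting Reisner's criterion.
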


We further prove that the bounds on $q$ established in the preceding theorems are sharp, as demonstrated by explicit counterexamples (see Example~\ref{sharp-bd}). 
Motivated by the work of Francisco and H\`a~\cite{FH} on independence complexes arising from whiskering a vertex cover, we investigate whether analogous phenomena persist for higher matching-free complexes. We show that, in contrast to the case $q = 1$, the property of sequential Cohen-Macaulayness does not extend to $\cf^q(H \cup W(S))$ for $q \ge 2$; see Example~\ref{end-ex}.

Next, we turn to the study of the depth of squarefree powers of edge ideals of whisker graphs. Our main result is the following.

\begin{theorem}[Theorem~\ref{depth-formula}]
Let $G=W(H)$ be a whisker graph. Then
\[
\depth\!\bigl(R/I(G)^{[q]}\bigr)
=
\begin{cases}
n+q-1, 
& \text{ if } 1\le q\le \lfloor m/2\rfloor,\\
n,
& \text{ if } m \text{ odd and } q=\lceil m/2\rceil,\\
n+q-1, 
& \text{ if } m=\infty \text{ and } 1\le q\le \nu(G).
\end{cases}
\]
\end{theorem}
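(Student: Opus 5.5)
The plan is to read off the depth from the facet structure of $\Delta=\cf^q(G)$ established in Theorems~\ref{into1} and~\ref{intro2}, using the standard fact that for a shellable (more generally sequentially Cohen-Macaulay) complex, $\depth \K[\Delta]$ equals one plus the minimum dimension of a facet. We have $\depth R/I_\Delta=\dim\Delta+1$ when $\Delta$ is Cohen-Macaulay. In the non-pure case $\Delta$ is shellable, hence sequentially Cohen-Macaulay, and then $\depth R/I_\Delta=\min\{|F|: F \text{ a facet of } \Delta\}$. So the proof reduces to computing the minimum and maximum facet sizes of $\cf^q(W(H))$ in the relevant ranges of $q$.

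Write $V(G)=\{x_1,\dots,x_n,y_1,\dots,y_n\}$ with whiskers $x_iy_i$.

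First, the maximal size of a face is $n+q-1$. Take the $n$ leaves $y_1,\dots,y_n$ together with any $q-1$ of the $x_i$. Its induced matching number is exactly $q-1$, and adding any further $x_j$ creates a $q$-matching through the whiskers. For the upper bound, a face $F$ meeting $a$ pairs $\{x_i,y_i\}$ in both vertices satisfies $\nu(G[F])\ge a$. Thus $a\le q-1$, and $|F|\le n+a\le n+q-1$.

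Second, in the pure ranges every facet has this size, so in the cases $1\le q\le\lfloor m/2\rfloor$ and $m=\infty$ we get $\depth = n+q-1$. The range $q\le\lfloor m/2\rfloor$ lies inside the purity range of Theorem~\ref{into1}, because $\ell\ge m$ gives $\lfloor m/2\rfloor<\lceil \ell/2\rceil$. Cohen-Macaulayness then comes from Theorem~\ref{intro2}. When $m=\infty$, $H$ is bipartite, so the complex is pure for all $q$ and again shellable.

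Third, in the remaining case $m$ is odd and $q=\lceil m/2\rceil=(m+1)/2$, so $\ell=m$. The complex is shellable but not pure, and we need the minimum facet size to equal $n$. For the upper bound on the minimum, take an induced $m$-cycle $C$ of $H$ on $x_1,\dots,x_m$ and let $F=\{x_1,\dots,x_m\}\cup\{y_{m+1},\dots,y_n\}$. Then $|F|=n$ and $\nu(G[F])=\nu(C_m)=(m-1)/2=q-1$. Adding $y_i$ with $i\le m$ gives the cycle plus a pendant, whose matching number is $(m+1)/2$. Adding $x_j$ with $j>m$ gives a $q$-matching using the whisker $x_jy_j$ and $(m-1)/2$ cycle edges. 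So $F$ is a facet.

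For the lower bound, we must show every facet has at least $n$ vertices. Suppose a facet $F$ misses both $x_i$ and $y_i$. Then adding $y_i$ must create a $q$-matching, which forces the matching to use an edge at $y_i$. But $x_i\notin F$, so $y_i$ is isolated in $G[F\cup\{y_i\}]$, a contradiction. Hence $F$ meets every pair $\{x_i,y_i\}$, and $|F|\ge n$.

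This lower bound is the simple pigeonhole observation above and is uniform in $q$, so I expect the main care to lie in the facet verification for the cycle construction. Once both bounds are in hand, shellability gives $\depth=n$ in this case.
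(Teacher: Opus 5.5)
Your proposal is correct and follows the paper's proof. In the first and third cases you use purity plus shellability to get Cohen-Macaulayness, so $\depth=\dim+1=n+q-1$. For $m$ odd and $q=\lceil m/2\rceil$ you combine sequential Cohen-Macaulayness with the minimum-facet-size formula (the paper cites Faridi for this), the same facet $\{x_1,\dots,x_m\}\cup\{y_{m+1},\dots,y_n\}$, and the bound $|F|\ge n$ for every facet. Your direct arguments for the maximal face size and for $|F|\ge n$ (a facet must meet every whisker pair) are somewhat cleaner than the paper's appeal to the proof of Theorem~\ref{thm:dim}.
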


When $H$ is a unicyclic graph and $m$ is odd, we further obtain an upper bound for the depth of squarefree powers of the edge ideal of $G$ (see Theorem~\ref{uni-depth}).

In~\cite{DRS24}, the authors proposed the following conjecture concerning whisker graphs of cycles.

\begin{conj}[\cite{DRS24}, Conjecture~6.3]
Let $G = W(C_n)$ be the whisker graph of a cycle of length $n$. Then
\[
\depth\!\bigl(R/I(G)^{[q]}\bigr)
=
\begin{cases}
n+q-1, & \text{ if } 1\le q\le \lfloor n/2\rfloor,\\
2q-1, & \text{ if } \lfloor n/2\rfloor+1\le q\le n.
\end{cases}
\]
\end{conj}

As a consequence of our main results, we verify this conjecture in a substantial range, namely for $1 \le q \le \frac{n}{2}$, and additionally for $q = \left\lceil \frac{n}{2} \right\rceil$ when $n$ is odd (see Corollary~\ref{cor:partial-cycle}).
Our results recover and unify several earlier statements concerning the Cohen-Macaulayness and depth of squarefree powers of edge ideals, including those in~\cite{DRS24, FM24}; see Corollary~\ref{know-rs}.

This paper is organized as follows. 
In Section~\ref{pre}, we recall the necessary terminology and preliminary results used throughout the paper. 
Section~\ref{AR} develops the key technical lemmas and propositions that form the foundation of our arguments. 
In Section~\ref{pure}, we characterize the purity of $q$-matching-free complexes associated with whisker graphs. 
Section~\ref{shellable} is devoted to establishing shellability results and deriving the main structural and algebraic consequences. 
Finally, in Section~\ref{depth-sq}, we compute the depth of squarefree powers of edge ideals of whisker graphs within a specified range.

\section{Preliminaries}\label{pre}
In this section we fix notation and recall basic definitions.
Let $G$ be a finite simple graph with vertex set $V(G)$ and edge set $E(G)$. A subgraph $H\subseteq G$ is \emph{induced} if $\{u,v\}\in E(H)$ precisely when $u,v\in V(H)$ and $\{u,v\}\in E(G)$. For $S\subseteq V(G)$ we denote by $G[S]$ the induced subgraph with vertex set $S$.
The \emph{open neighborhood} of $S$ is
$N_G(S)=\{v\in V(G) \mid \{u,v\}\in E(G) \text{ for some } u\in S\},$
and the \emph{closed neighborhood} is $N_G[S]=S\cup N_G(S)$.
We write $G\setminus S$ for $G[V(G)\setminus S]$.
A vertex $x\in V(G)$ is called \emph{simplicial} if its closed neighborhood
$N_G[x]$ induces a clique; that is, $\{x_1,x_2\}\in E(G)$ for all
$x_1,x_2\in N_G[x]$. 
Let $\{e_1,\dots,e_q\}$ be a set of edges in a graph $G$.
We define the \emph{support} of their product by
$\supp(e_1\cdots e_q) := \bigcup_{i=1}^q e_i,$
i.e., the set of vertices incident to at least one of the edges.
In particular, if $M \subseteq E(G)$ is a matching, we set
$\supp(M) := \bigcup_{e \in M} e.$
Similarly, for a monomial $m$ in the polynomial ring $R=\K[V(G)]$, we define
$\supp(m) := \{x \in V(G) \mid x \text{ divides } m\}.$

A \emph{simplicial complex} $\Delta$ on a vertex set $V = \{x_1,\dots,x_n\}$ is a family of subsets of $V$ (called \emph{faces}) that is closed under taking subsets and contains every singleton $\{x_i\}$.
The maximal faces under inclusion are the \emph{facets}.
For a face $F \in \Delta$, set $\dim(F) = |F|-1$, and define the dimension of $\Delta$ as $\dim(\Delta) = \max\{\dim(F) \mid F \in \Delta\}$.
The complex $\Delta$ is \emph{pure} if all its facets have the same dimension; a complex with exactly one facet is a \emph{simplex}.

For a face $F \in \Delta$, its \emph{link} and \emph{deletion} are, respectively,
\[
\link_{\Delta}(F) \;=\; \{F' \subseteq V \mid F'\cap F = \emptyset \text{ and } F'\cup F \in \Delta\},~
\Delta\setminus F \;=\; \{H \in \Delta \mid H\cap F = \emptyset\}.
\]

If $\Delta_1$ and $\Delta_2$ are simplicial complexes on disjoint vertex sets $V_1$ and $V_2$, their \emph{join} is
$\Delta_1 * \Delta_2 \;=\; \{\sigma_1 \cup \sigma_2 \mid \sigma_1 \in \Delta_1,\; \sigma_2 \in \Delta_2\},$
which is a simplicial complex on $V_1\cup V_2$.  Clearly $\Delta * \{\emptyset\} = \Delta$ for any $\Delta$.

Vertex decomposability was introduced for pure complexes by Provan and Billera~\cite{ProvLouis} and later generalized to the non‑pure setting by Bj\"orner and Wachs~\cite{BW97}.  

\begin{definition}\label{def:vd}
A simplicial complex $\Delta$ is \emph{vertex decomposable} if either $\Delta$ is a simplex, or there exists a vertex $v \in V(\Delta)$ such that:
\begin{enumerate}
    \item both $\Delta \setminus v$ and $\link_{\Delta}(v)$ are vertex decomposable, and
    \item no face of $\link_{\Delta}(v)$ is a facet of $\Delta \setminus v$.
\end{enumerate}
A vertex $v$ satisfying (2) is called a \emph{shedding vertex}.
\end{definition}

Shellability was first defined for pure complexes and later extended by Bj\"orner and Wachs to the non‑pure case~\cite{BW96,BW97}.

\begin{definition}
Let $\Delta$ be a simplicial complex. Denote by $\langle F\rangle$ the simplex spanned by a face $F$.  
We say $\Delta$ is \emph{shellable} if its facets can be ordered $F_1,\dots,F_t$ such that for every $k=2,\dots,t$,
$\Bigl(\bigcup_{i=1}^{k-1}\langle F_i\rangle\Bigr)\cap \langle F_k\rangle$
is pure of dimension $\dim(F_k)-1$.
\end{definition}

A useful generalization is the notion of a \emph{shedding face}, introduced by Jonsson~\cite{J05}.

\begin{definition}[\cite{J05}, Definition~2.10]
A face $\sigma\in\Delta$ is a \emph{shedding face} if for every $\tau\in\operatorname{star}(\Delta,\sigma)=\{\rho\in\Delta\mid \sigma\subseteq\rho\}$ and every $v\in\sigma$, there exists $w\in V(\Delta)\setminus\tau$ such that
$(\tau\cup\{w\})\setminus\{v\}\in\Delta .$
\end{definition}

\begin{remark}
When $\sigma$ consists of a single vertex, this reduces to the shedding vertex condition of Bj\"orner and Wachs~\cite[Section~11]{BW97}.
\end{remark}

A central tool for proving shellability is the following deletion–link recursion, which allows induction on the complex.

\begin{lemma}[\cite{Russ11}, Lemma~3.4]\label{lem:shedding}
Let $\sigma$ be a shedding face of $\Delta$.  
If $\Delta\setminus\sigma$ and $\link_\Delta(\sigma)$ are shellable, then $\Delta$ is shellable.
\end{lemma}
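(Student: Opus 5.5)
We follow the standard argument from the theory of shedding faces (Jonsson's framework), organized so that shellability of $\Delta$ is built from shellings of the deletion and the link. First we fix shellings of the two pieces. Let $G_1,\dots,G_r$ be a shelling order of $\Delta\setminus\sigma$, and let $L_1,\dots,L_s$ be a shelling order of $\link_\Delta(\sigma)$. The facets of $\Delta$ containing $\sigma$ are exactly the sets $L_j\cup\sigma$. The key preliminary observation is that every facet of $\Delta\setminus\sigma$ is a facet of $\Delta$. Suppose instead that $G_i\subsetneq F$ for a facet $F$ of $\Delta$. Since $G_i$ is maximal in the deletion, $F$ must meet $\sigma$. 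The shedding condition is applied to faces $\tau\supseteq\sigma$. So we pass from $F$ to $\tau=F\cup\sigma$ when that is a face; more carefully, we use the shedding condition iteratively to remove the vertices of $\sigma\cap F$ one by one, each time adding a new vertex $w$ outside the current face. This produces a face of the deletion strictly containing $G_i$, which is a contradiction. This iteration step needs care, because after a swap the face may no longer contain all of $\sigma$. I expect it to be the main technical point, and I would handle it by noting that Jonsson's condition is stated for every $\tau$ in the star and every $v\in\sigma$ separately, and arguing by induction on $|F\cap\sigma|$.

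Next we propose the order $G_1,\dots,G_r,\ \sigma\cup L_1,\dots,\sigma\cup L_s$. For the first block, the intersection conditions are inherited from the shelling of $\Delta\setminus\sigma$, since these facets are facets of $\Delta$. For $\sigma\cup L_k$, the previously listed faces meeting $\langle\sigma\cup L_k\rangle$ come in two kinds. The first kind is $\langle\sigma\cup L_j\rangle$ with $j<k$; the intersections $\langle\sigma\cup L_j\rangle\cap\langle\sigma\cup L_k\rangle$ are $\sigma$ joined with $\langle L_j\rangle\cap\langle L_k\rangle$, hence pure of codimension one by the link shelling. The second kind comes from the deletion, where the intersections are contained in $\langle L_k\cup(\sigma\setminus\{v\})\rangle$ for various $v\in\sigma$.

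It remains to show that for each $v\in\sigma$, the codimension-one face $(\sigma\cup L_k)\setminus\{v\}$ already lies in the earlier union. Moreover, every face of $\langle\sigma\cup L_k\rangle$ not containing $\sigma$ must lie in one of these codimension-one faces or in earlier link pieces. For the first claim we apply the shedding condition to $\tau=\sigma\cup L_k$ and $v$. This gives $w\notin\tau$ with $(\tau\cup\{w\})\setminus\{v\}\in\Delta$, which is a face avoiding $v$, hence not containing $\sigma$. Extending it to a facet, and using the same swapping argument as before to remove the remaining vertices of $\sigma$, yields a facet of $\Delta\setminus\sigma$ containing $\tau\setminus\{v\}$. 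So $\tau\setminus\{v\}$ is covered by some $G_i$. Consequently the intersection of $\langle\sigma\cup L_k\rangle$ with the earlier union equals the union of the facets $\tau\setminus\{v\}$ for $v\in\sigma$ together with $\sigma\cup(\langle L_j\rangle\cap\langle L_k\rangle)$ for $j<k$. Each of these is $(\dim\tau-1)$-dimensional, so the intersection is pure of the right dimension. The delicate step throughout is the passage from one shedding swap to a genuine face of the deletion when $|\sigma|>1$; if the direct induction fails, the fallback is to cite Jonsson's original argument for this reduction.
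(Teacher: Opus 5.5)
There is a genuine gap, and it sits where you expected it: the passage from one shedding swap to a face of the deletion. Jonsson's exchange condition applies only to faces $\tau\supseteq\sigma$. After one swap, $(\tau\cup\{w\})\setminus\{v\}$ is no longer in $\operatorname{star}(\Delta,\sigma)$, so nothing permits a second swap. If $F\cup\sigma\notin\Delta$ you cannot even start, so an induction on $|F\cap\sigma|$ has nothing to run on.

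The underlying problem is that you read $\Delta\setminus\sigma$ as the faces disjoint from $\sigma$. This is what ``$F$ must meet $\sigma$'' and ``remove the vertices of $\sigma\cap F$ one by one'' presuppose, and it is how the preliminaries literally define $\Delta\setminus F$. With that reading your preliminary observation is false, and so is the lemma. Take $\Delta=\langle\{a,b\},\{a,c\},\{b,d\},\{e,f\}\rangle$ and $\sigma=\{a,b\}$:
\begin{enumerate}
\item $\sigma$ is a shedding face: its star is $\{\sigma\}$, and you can use $w=d$ for $v=a$ and $w=c$ for $v=b$.
\item The faces disjoint from $\sigma$ form $\langle\{c\},\{d\},\{e,f\}\rangle$, which is shellable in the order $\{e,f\},\{c\},\{d\}$.
\item $\link_\Delta(\sigma)=\{\emptyset\}$ is shellable.
\item Yet $\Delta$ is a disconnected pure $1$-dimensional complex, hence not shellable.
\end{enumerate}
Here $\{c\}$ is a facet of the deletion but not of $\Delta$. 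Your list $G_1,\dots,G_r,\sigma\cup L_1,\dots$ also omits $\{a,c\}$ and $\{b,d\}$, which meet $\sigma$ without containing it. Your last step is inconsistent with this reading too. For $|\sigma|\ge 2$ no face disjoint from $\sigma$ can contain $\tau\setminus\{v\}\supseteq\sigma\setminus\{v\}$, and removing the remaining vertices of $\sigma$ destroys exactly that containment.

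The repair is to use the Jonsson--Woodroofe deletion $\operatorname{del}_\Delta(\sigma)=\{\rho\in\Delta\mid\sigma\not\subseteq\rho\}$. This is also the reading the filtration $\Omega_k=\Omega_{k-1}\setminus\mu_k$ needs for Theorem~\ref{thm-shell}(1) to hold. With it, every facet of $\Delta$ either does not contain $\sigma$, and is then a facet of the deletion, or equals $\sigma\cup L$ with $L$ a facet of the link. So your list is complete. Suppose a facet $G$ of the deletion were properly contained in some $F\in\Delta$. Maximality forces $\sigma\subseteq F$, and $G$ misses some $v\in\sigma$. A single application of the exchange condition to $\tau=F$ and $v$ gives $(F\cup\{w\})\setminus\{v\}$, a face not containing $\sigma$ that strictly contains $G$. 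That is one swap, with no iteration.

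In the second half you do not need the shedding condition at all. The face $(\sigma\cup L_k)\setminus\{v\}$ already does not contain $\sigma$, so it lies in some $G_i$, while no face containing $\sigma$ lies in any $G_i$. Hence the intersection of $\langle\sigma\cup L_k\rangle$ with the earlier facets is $\bigcup_{v\in\sigma}\langle(\sigma\cup L_k)\setminus\{v\}\rangle$ together with $\langle\sigma\rangle$ joined with $\bigcup_{j<k}\bigl(\langle L_j\rangle\cap\langle L_k\rangle\bigr)$. Note that the link shelling makes this union over $j<k$ pure of codimension one, not each $\langle L_j\rangle\cap\langle L_k\rangle$ individually; phrase that step accordingly. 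With these corrections your ordering is exactly Woodroofe's shelling, which is what the paper cites.
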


\medskip
\noindent
A simplicial complex $\Delta$ is said to be \emph{Cohen-Macaulay over $\K$} if
\[
\widetilde{H}_i\bigl(\link_\Delta(\sigma);\K\bigr)=0
\quad \text{for all } \sigma \in \Delta \text{ and all }
i < \dim\bigl(\link_\Delta(\sigma)\bigr),
\]
where $\widetilde{H}_i(-;\K)$ denotes the $i$-th reduced simplicial homology group with coefficients in $\K$.
Equivalently, the Stanley-Reisner ring $\K[\Delta]$ is Cohen-Macaulay.
In this case, $\Delta$ is pure, and this is also equivalent to the unmixedness of the Stanley-Reisner ideal $I_\Delta$, that is, all minimal primes of $I_\Delta$ have the same height.

\medskip
\noindent
Let $d$ be an integer. The \emph{pure $d$-skeleton} of $\Delta$ is the subcomplex generated by all faces of $\Delta$ of dimension exactly $d$.
The complex $\Delta$ is said to be \emph{sequentially Cohen-Macaulay over $\K$} if, for every $d$, its pure $d$-skeleton is Cohen-Macaulay over $\K$.
In particular, every pure sequentially Cohen-Macaulay complex is Cohen-Macaulay.

\medskip
\noindent
Let $\Delta$ be a simplicial complex on the vertex set $\{x_1,\dots,x_n\}$ and let $R=\K[x_1,\dots,x_n]$. Then
\[
\depth(R/I_\Delta)
=
\max \left\{\, i \;\middle|\; \Delta^{(i)} \text{ is Cohen-Macaulay} \right\}
+ 1,
\]
where
$\Delta^{(i)}
=
\{\, F \in \Delta \mid \dim F \le i \,\}$
denotes the $i$-skeleton of $\Delta$.
Moreover,
\[
\dim(R/I_\Delta)=\dim(\Delta)+1,~
\depth(R/I_\Delta) \le \dim(R/I_\Delta),
\]
and equality holds if and only if $R/I_\Delta$ is Cohen-Macaulay.

We now introduce the main object of this paper.

\begin{definition}
Let $G$ be a graph and let $q \ge 1$ be an integer. 
The \emph{$q$-matching-free complex} of $G$, denoted by $\cf^q(G)$, is the simplicial complex on the vertex set $V(G)$ defined by
\[
\cf^q(G)
=
\bigl\{
F \subseteq V(G)
\;\big|\;
\text{the induced subgraph } G[F] \text{ contains no matching of size } q
\bigr\}.
\]
\end{definition}

\noindent
If $q > \nu(G)$, then $\cf^q(G)$ is the full simplex on $V(G)$.
Throughout the paper, for $F \in \cf^q(G)$ and $k \ge 0$, the statement that $F$ contains $k$ disjoint edges of $G$ means that the induced subgraph $G[F]$ contains $k$ pairwise disjoint edges.

The next proposition exhibits the fundamental relationship between the squarefree power $I(G)^{[q]}$ and the matching‑free complex $\cf^q(G)$.

\begin{proposition}\label{SR}
Let $G$ be a graph and let $q \ge 1$. Then
$I_{\cf^q(G)} = I(G)^{[q]}.$
\end{proposition}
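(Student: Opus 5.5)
The statement follows directly from the Stanley--Reisner correspondence. I would compare the minimal nonfaces of $\cf^q(G)$ with the minimal monomial generators of $I(G)^{[q]}$. Recall that $I_\Delta$ is generated by the squarefree monomials $x_F=\prod_{x\in F}x$ with $F\notin\Delta$. So it is enough to show that for every $F\subseteq V(G)$,
\[
x_F\in I(G)^{[q]} \iff F\notin \cf^q(G).
\]
Both ideals are squarefree monomial ideals, so they are determined by the squarefree monomials they contain, and this equivalence gives equality of the ideals.

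For the implication ($\Leftarrow$), take $F\notin\cf^q(G)$. By definition $G[F]$ contains a matching $\{e_1,\dots,e_q\}$ of size $q$. Then $\supp(e_1\cdots e_q)\subseteq F$, so the generator $\prod_{x\in e_1\cup\cdots\cup e_q}x$ of $I(G)^{[q]}$ divides $x_F$, and hence $x_F\in I(G)^{[q]}$.

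For the implication ($\Rightarrow$), suppose $x_F\in I(G)^{[q]}$. Since $I(G)^{[q]}$ is a monomial ideal, $x_F$ is divisible by one of its generators $\prod_{x\in e_1\cup\cdots\cup e_q}x$, where $e_1,\dots,e_q$ are pairwise disjoint edges. Divisibility forces $e_1\cup\cdots\cup e_q\subseteq F$. Each $e_i$ has both endpoints in $F$, so it is an edge of the induced subgraph $G[F]$. Thus $G[F]$ has a matching of size $q$, and $F\notin\cf^q(G)$.

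One side point needs care. The definition of a simplicial complex in the paper requires that every singleton be a face. For $q\ge1$ a single vertex carries no edge, so every singleton lies in $\cf^q(G)$ and $\cf^q(G)$ is a genuine complex on $V(G)$. It is also closed under subsets, because an induced subgraph of $G[F]$ cannot have a larger matching than $G[F]$. If $q>\nu(G)$, both sides are trivial: $I(G)^{[q]}=0$ and $\cf^q(G)$ is the full simplex, so $I_{\cf^q(G)}=0$ as well.

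No step here is a real obstacle. The only point that needs attention is that divisibility of squarefree monomials corresponds to inclusion of supports. That is what makes "$G[F]$ contains the $q$ disjoint edges" equivalent to "the generator divides $x_F$."
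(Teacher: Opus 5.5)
Your proof is correct and follows the same route as the paper: in both directions you identify nonfaces of $\cf^q(G)$ with vertex sets whose induced subgraph contains $q$ pairwise disjoint edges, i.e.\ with squarefree monomials divisible by a generator of $I(G)^{[q]}$. Your version, working with the squarefree monomials $x_F$ and using that divisibility corresponds to inclusion of supports, is slightly more careful than the paper's. The paper speaks of $\supp(u)$ for an arbitrary element $u$ of the ideal, when it really means a monomial.
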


\begin{proof}
Let $u \in I_{\cf^q(G)}$. By definition, $\supp(u)$ is not a face of $\cf^q(G)$. Hence the induced subgraph $G[\supp(u)]$ contains at least $q$ pairwise disjoint edges of $G$. Therefore, $u \in I(G)^{[q]}$.
Conversely, let $v \in I(G)^{[q]}$. Then $\supp(v)$ contains a matching of size at least $q$ in $G$, and thus $\supp(v)$ cannot be a face of $\cf^q(G)$. By definition of the Stanley-Reisner ideal, this implies that $v \in I_{\cf^q(G)}$.
\end{proof}

The following notion of even connections, introduced by Banerjee~\cite{banerjee}, plays a key role in the study of squarefree powers of edge ideals.

\begin{definition}[\cite{banerjee}, Definition 6.2]
Let $G$ be a graph and let $e_1,\ldots, e_s$ be  edges of $G$ (repetitions allowed).
Vertices $u,v\in V(G)$ are even‑connected with respect to $e_1\cdots e_s$ if there exist $r\ge 1$ and vertices $p_0,p_1,\dots,p_{2r+1}$ such that:
\begin{enumerate}
    \item $p_0=u$ and $p_{2r+1}=v$;
    \item $\{p_k,p_{k+1}\}\in E(G)$ for $0\le k\le 2r$;
    \item for each $0\le k\le r-1$, $\{p_{2k+1},p_{2k+2}\}=e_i$ for some $i$;
    \item for every $i$,
     $\bigl|\{\,k \mid \{p_{2k+1},p_{2k+2}\}=e_i\,\}\bigr|
    \le
    \bigl|\{\,j \mid e_j=e_i\,\}\bigr|.$
\end{enumerate}
The sequence $p_0,p_1,\dots,p_{2r+1}$ is called an even‑connection between $u$ and $v$ with respect to $e_1\cdots e_s$.
We write $u \sim_{e_1\cdots e_s} v$ to indicate that $u$ and $v$ are even‑connected with respect to $e_1\cdots e_s$.
\end{definition}

\begin{definition}
Let $G$ be a graph and let $e_1,\dots,e_q$ be pairwise disjoint edges of $G$ with $q\ge1$.
Define $G^{e_1\cdots e_q}$ to be the graph whose vertex set is
$V(G)\setminus\supp(e_1\cdots e_q)$, and in which two vertices
$x_i,x_j$ are adjacent if and only if either $\{x_i,x_j\}\in E(G)$ or
$x_i\sim_{e_1\cdots e_q}x_j$.
\end{definition}

The next theorem describes the colon ideals of squarefree powers associated with disjoint edges.
We denote by $\mathcal{G}(I)$ the set of minimal monomial generators of a
monomial ideal $I$.

\begin{theorem}[\cite{Sey24}, Corollary~3.4]\label{Thm-even}
Let $G$ be a graph and let $e_1,\dots,e_q$ be pairwise disjoint edges of $G$
with $q\ge1$. Set $u=e_1\cdots e_q\in\mathcal{G}\!\bigl(I(G)^{[q]}\bigr)$.
Then every minimal generator of $(I(G)^{[q+1]}:u)$ has degree $2$, and
$(I(G)^{[q+1]}:u)=I\!\bigl(G^{e_1\cdots e_q}\bigr).$
\end{theorem}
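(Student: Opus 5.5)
The plan is to prove the two inclusions separately, using the standard description of colon ideals of monomial ideals. For monomial ideals, $(I:u)$ is generated by the monomials $g/\gcd(g,u)$ for $g\in\mathcal{G}(I)$. Taking $I=I(G)^{[q+1]}$, each such $g$ is $\prod_{x\in\supp(M)}x$ for a matching $M$ of size $q+1$. Since $u$ is squarefree, $g/\gcd(g,u)=\prod_{x\in\supp(M)\setminus\supp(u)}x$. So it suffices to show two things:
\begin{enumerate}
\item every edge of $G^{e_1\cdots e_q}$ gives an element of the colon ideal;
\item every monomial $g/\gcd(g,u)$ is divisible by $xy$ for some edge $\{x,y\}$ of $G^{e_1\cdots e_q}$.
\end{enumerate}
Together these give the equality of ideals. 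They also show that the minimal generators are exactly the quadrics $xy$, hence have degree $2$.

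\emph{Inclusion $\supseteq$.} Let $x,y\in V(G)\setminus\supp(u)$.
\begin{itemize}
\item If $\{x,y\}\in E(G)$, then $xy\,u$ is the product of the $q+1$ pairwise disjoint edges $\{x,y\},e_1,\dots,e_q$, so $xy\in(I(G)^{[q+1]}:u)$.
\item If $x\sim_{e_1\cdots e_q}y$, take an even-connection $p_0=x,p_1,\dots,p_{2r+1}=y$. The edges $e_i$ are pairwise disjoint, so condition~(4) forces the edges $\{p_{2k+1},p_{2k+2}\}$ to be $r$ distinct edges $e_{i_1},\dots,e_{i_r}$. Hence $p_1,\dots,p_{2r}$ are distinct vertices of $\supp(u)$, and $x\ne y$ lie outside $\supp(u)$.
\item Replace $e_{i_1},\dots,e_{i_r}$ by the $r+1$ edges $\{p_0,p_1\},\{p_2,p_3\},\dots,\{p_{2r},p_{2r+1}\}$. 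These are pairwise disjoint and cover exactly $\{x,y\}\cup\bigcup_k e_{i_k}$.
\item Together with the remaining $q-r$ edges $e_j$, this gives a matching of size $q+1$ whose support is $\supp(u)\cup\{x,y\}$. Therefore $xy\,u\in I(G)^{[q+1]}$.
\end{itemize}

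\emph{Inclusion $\subseteq$.} This is an augmenting-path argument. Let $N=\{e_1,\dots,e_q\}$ and let $M$ be a matching of size $q+1$. The symmetric difference $M\triangle N$ is a disjoint union of paths and even cycles whose edges alternate between $M$ and $N$. Since $|M|>|N|$, some component $P$ has strictly more $M$-edges than $N$-edges. Such a component must be a path that begins and ends with $M$-edges: $x=p_0,p_1,\dots,p_{2r+1}=y$ with $r\ge 0$.

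The key point is that the endpoints $x,y$ are not in $\supp(N)$. Suppose $x$ were covered by an edge $e\in N$. If $e\in M$, then $x$ would lie in two distinct $M$-edges (namely $e$ and $\{p_0,p_1\}$), which is impossible. If $e\notin M$, then $e\in M\triangle N$, so $P$ would extend past $x$ and $x$ would not be an endpoint. The same argument applies to $y$.

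Thus $x,y\in\supp(M)\setminus\supp(u)$, so $xy$ divides $g/\gcd(g,u)$.
\begin{itemize}
\item If $r=0$, then $\{x,y\}\in E(G)$.
\item If $r\ge1$, the path is an even-connection with respect to $e_1\cdots e_q$. Its $N$-edges $\{p_{2k+1},p_{2k+2}\}$ are distinct edges $e_i$, so condition~(4) holds.
\end{itemize}
In either case $\{x,y\}\in E(G^{e_1\cdots e_q})$, which proves the inclusion.

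The main obstacle is the endpoint bookkeeping in the second inclusion: showing that the augmenting path's endpoints avoid $\supp(u)$ and that all of its vertices are distinct. The plan handles this with the standard structure of $M\triangle N$ for two matchings, which is a union of vertex-disjoint alternating paths and cycles. With that in hand, the rest is direct verification.
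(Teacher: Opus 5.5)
Your proof is correct and complete. The paper itself gives no proof of this statement. It quotes the result from \cite{Sey24} and uses it as a black box, for instance in Proposition~\ref{even2}, so there is no in-paper argument to match.

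\textbf{How your two directions relate to the paper.}
\begin{itemize}
\item Your $\supseteq$ direction is exactly the edge-exchange trick the authors use later in the proof of Theorem~\ref{thm-shell}(3). There, the $r$ edges $e_{i_k}$ on an even-connection are traded for the $r+1$ pairwise disjoint edges $\{p_{2k},p_{2k+1}\}$.
\item Your $\subseteq$ direction is the standard Berge augmenting-path argument applied to $M\triangle N$. Your endpoint bookkeeping is the right check. An $N$-edge at an endpoint would either put two $M$-edges at that vertex or give it degree two in $M\triangle N$. Either way, the endpoints lie off $\supp(u)$.
\end{itemize}

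\textbf{What your route adds.} Besides making the paper self-contained, it makes explicit two facts the authors rely on tacitly. First, because the $e_i$ are pairwise disjoint, condition~(4) of the even-connection definition reduces to ``each $e_i$ is used at most once.'' Hence every even-connection is a simple alternating path with $2r+2$ distinct vertices. Second, $g/\gcd(g,u)$ is squarefree and supported off $\supp(u)$. So, unlike Banerjee's setting for ordinary powers, no variable of $\supp(u)$ and no square $x^2$ coming from a closed odd walk can appear in the colon ideal. Your argument also covers the degenerate case $q=\nu(G)$ automatically: there $I(G)^{[q+1]}=0$ and $G^{e_1\cdots e_q}$ has no edges.
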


For undefined terminology and further basic properties we refer to the standard references \cite{BH93, Herzog'sBook, J08, west}.

\section{Auxiliary Results on Squarefree Powers}\label{AR}
This section collects auxiliary results on squarefree powers of edge ideals,
based on even connections, which will be used in subsequent arguments.
We begin with a lemma describing the effect of removing a leaf edge on graphs
arising from even connections.

\begin{lemma}\label{lm:leaf-remove}
Let $G$ be a graph and let $e_1,\dots,e_q\in E(G)$ be pairwise disjoint edges.
Suppose $e_i=\{x,y\}$ for some $i$ and that $N_G(x)=\{y\}$.
Then
$G^{e_1\cdots e_q}
\;=\;
\bigl(G\setminus\{x,y\}\bigr)^{\,e_1\cdots e_{i-1}e_{i+1}\cdots e_q}.$
\end{lemma}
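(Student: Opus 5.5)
The plan is to check directly that the two graphs have the same vertex set and the same edge set. For the vertex sets, $\supp(e_1\cdots e_q)$ is the disjoint union of $\{x,y\}$ and $\supp(e_1\cdots e_{i-1}e_{i+1}\cdots e_q)$. So the vertex set of $G^{e_1\cdots e_q}$ is $V(G)\setminus\supp(e_1\cdots e_q)$, which equals $V(G\setminus\{x,y\})\setminus\supp(\prod_{j\ne i}e_j)$, the vertex set of the right-hand side. Likewise, ordinary edges of $G$ between two such vertices are exactly the edges of $G\setminus\{x,y\}$ between them, because $G\setminus\{x,y\}$ is an induced subgraph. Everything therefore reduces to showing that, for vertices $u,v$ outside $\supp(e_1\cdots e_q)$,
\[
u\sim_{e_1\cdots e_q} v \text{ in } G \iff u\sim_{\prod_{j\neq i}e_j} v \text{ in } G\setminus\{x,y\}.
\]

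The direction from right to left is immediate. An even-connection in $G\setminus\{x,y\}$ with respect to $\prod_{j\ne i}e_j$ is a walk in $G$ using only edges from $\{e_j\}_{j\neq i}$ in the odd positions, with the same multiplicity bounds. It is therefore also an even-connection in $G$ with respect to $e_1\cdots e_q$.

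For the direction from left to right, take an even-connection $u=p_0,p_1,\dots,p_{2r+1}=v$ in $G$ with respect to $e_1\cdots e_q$. Since the $e_j$ are pairwise distinct, condition (4) says each $e_j$ is used at most once as some $\{p_{2k+1},p_{2k+2}\}$. I claim the walk never visits $x$ or $y$. Because $u,v\notin\{x,y\}$, any occurrence of $x$ or $y$ must be at an interior position $1\le t\le 2r$.

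First I will show that $x$ never appears. Suppose $p_t=x$ with $1\le t\le 2r$. Both neighbours $p_{t-1}$ and $p_{t+1}$ in the walk are adjacent to $x$, so both equal $y$, since $N_G(x)=\{y\}$. Exactly one of the edges $\{p_{t-1},p_t\}$ and $\{p_t,p_{t+1}\}$ sits in an odd-indexed slot $\{p_{2k+1},p_{2k+2}\}$. That edge is $\{x,y\}=e_i$, and the occurrence is fine by itself. The other edge, however, is also $\{x,y\}$ and sits in an even slot. Then $p_{t-1}=p_{t+1}=y$, and since $y\neq u,v$, $y$ must itself be interior at position $t-1$ or $t+1$, so the previous argument can be rerun there.

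The cleaner way to finish is to look at $y$ directly. Suppose some $p_t=y$ with $t$ interior. The slot containing $p_t$ as an odd-position edge is $\{p_{2k+1},p_{2k+2}\}$, and it must be some $e_j$ containing $y$. By disjointness of the $e_j$, the only such edge is $e_i$, so the partner of $y$ in that slot is $x$. Hence $x$ occurs at an interior position as well. At that position, the other walk-neighbour of $x$ (through the even slot) must also be $y$, which forces a second odd slot equal to $e_i$. This violates condition (4), since $e_i$ is used at most once. Every vertex of the walk other than $p_0$ and $p_{2r+1}$ lies in some odd slot, and $x,y$ can only occur in the slot $e_i$. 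It follows that neither $x$ nor $y$ occurs in the walk.

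Consequently the walk lives in $G\setminus\{x,y\}$ and uses only the edges $e_j$ with $j\neq i$ in its odd slots, so it is the required even-connection on the right-hand side. The one delicate point is the bookkeeping of positions: the interior vertices $p_1,\dots,p_{2r}$ are exactly the vertices covered by the odd slots, and every interior vertex also has a neighbour in the walk through an even slot. That is the step to write out carefully.
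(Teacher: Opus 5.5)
Your proof is correct and follows the same route as the paper. Both arguments match vertex sets and ordinary $G$-edges, then show that the even-connections coincide because $e_i=\{x,y\}$ can never occur in an even-connection between vertices outside the support. You also supply the justification the paper only asserts: interior walk vertices lie in odd slots, disjointness forces any slot through $x$ or $y$ to be $e_i$, and the leaf condition $N_G(x)=\{y\}$ then forces a second use of $e_i$, violating condition (4). The abandoned first paragraph about $x$ can simply be deleted.
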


\begin{proof}
Set
$K:=G^{e_1\cdots e_q},$
$B:=\bigl(G\setminus\{x,y\}\bigr)^{\,e_1\cdots e_{i-1}e_{i+1}\cdots e_q}.$
Because $x$ and $y$ are removed in both constructions, $V(K)=V(B)$.
Let $a,b\in V(K)$.  
If $\{a,b\}\in E(G)$, then $\{a,b\}\in E(G\setminus\{x,y\})$, so $\{a,b\}\in E(B)$.
Now assume $a\sim_{e_1 \cdots e_q} b$  in $G$.
Since $N_G(x)=\{y\}$, the edge $e_i=\{x,y\}$ cannot appear in any even‑connection between $a$ and $b$.
Consequently, any such even‑connection uses only edges $e_j$ with $j\neq i$ and lies entirely in $G\setminus\{x,y\}$.
Thus $a \sim_{e_1\cdots e_{i-1}e_{i+1}\cdots e_q} b$  in $G\setminus\{x,y\}$, and therefore $\{a,b\}\in E(B)$.
We conclude $E(K)\subseteq E(B)$.
The reverse inclusion $E(B)\subseteq E(K)$ follows by an analogous argument, hence $K=B$.
\end{proof}

The following proposition describes the behavior of even connections under vertex deletion.

\begin{proposition}\label{prop:delete-vertex-even}
Let $G$ be a graph and let $e_1,\ldots,e_q\in E(G)$ be pairwise disjoint edges.
Let $x\in V(G^{e_1\cdots e_q})$. Then
$G^{e_1\cdots e_q}\setminus x \;=\; (G\setminus x)^{e_1\cdots e_q}.$
\end{proposition}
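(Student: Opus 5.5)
The plan is to compare the two graphs directly. Both have vertex set $V(G)\setminus(\supp(e_1\cdots e_q)\cup\{x\})$. Since $x\in V(G^{e_1\cdots e_q})$, the vertex $x$ is not in $\supp(e_1\cdots e_q)$. Consequently every $e_i$ is an edge of $G\setminus x$, and the graph $(G\setminus x)^{e_1\cdots e_q}$ is well defined. It then suffices to show that, for vertices $a,b$ in this common set, $\{a,b\}$ is an edge of one graph exactly when it is an edge of the other.

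First I will handle ordinary edges. For $a,b\neq x$ we have $\{a,b\}\in E(G)$ if and only if $\{a,b\}\in E(G\setminus x)$, since $G\setminus x$ is an induced subgraph.

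The remaining task is to show that $a\sim_{e_1\cdots e_q}b$ in $G$ if and only if $a\sim_{e_1\cdots e_q}b$ in $G\setminus x$. One direction is immediate: an even-connection in $G\setminus x$ is also a sequence of vertices in $G$ satisfying conditions (1)--(4), because the edges of $G\setminus x$ are edges of $G$ and the $e_i$ are unchanged.

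For the converse, take an even-connection $p_0,\dots,p_{2r+1}$ in $G$ with $p_0=a$ and $p_{2r+1}=b$. I claim that no $p_k$ equals $x$:
\begin{itemize}
\item the endpoints are $a,b\neq x$;
\item each interior vertex $p_k$ with $1\le k\le 2r$ lies on a pair $\{p_{2j+1},p_{2j+2}\}$, which equals some $e_i$, so $p_k\in\supp(e_1\cdots e_q)$, and $x$ is not in this set.
\end{itemize}
Hence every consecutive pair $\{p_k,p_{k+1}\}$ is an edge of $G\setminus x$, and conditions (3) and (4) are unaffected by the deletion. So the same sequence is an even-connection in $G\setminus x$.

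The only point needing care is this observation that all interior vertices of an even-connection lie in the support of the matching edges. Once that is noted, the argument reduces to bookkeeping, and combining the two parts gives equal edge sets.
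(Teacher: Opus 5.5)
Your proof is correct and follows essentially the same route as the paper: you identify the common vertex set, then observe that because $x\notin\supp(e_1\cdots e_q)$, every even-connection avoids $x$, so the edge sets coincide. Your explicit justification that the interior vertices $p_1,\dots,p_{2r}$ all lie in the support and the endpoints $a,b$ differ from $x$ is exactly the detail the paper leaves implicit.
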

\begin{proof}
Let $K:=G^{e_1\cdots e_q}\setminus x$ and $B:=(G\setminus x)^{e_1\cdots e_q}$.
Then $V(K)=V(B)=V(G)\setminus(\supp(e_1\cdots e_q)\cup\{x\})$. For $u,v \in V(K)$,
\[
\{u,v\}\in E(K)
\iff
\{u,v\}\in E(G)\ \text{or}\ u\sim_{e_1\cdots e_q} v \text{ in } G.
\]
Since $x\notin\supp(e_1\cdots e_q)$, every even-connection avoids $x$, so
\[
u\sim_{e_1\cdots e_q} v \text{ in } G
\iff
u\sim_{e_1\cdots e_q} v \text{ in } G\setminus x.
\]
Hence $\{u,v\}\in E(K)\iff\{u,v\}\in E(B)$, and $K=B$.
\end{proof}

The next result characterizes when a face of $\cf^q(G)$ containing $q-1$
disjoint edges can be extended by two vertices, in terms of adjacency and
even connections.

\begin{proposition}\label{even2}
Let $G$ be a graph, and let $f\in\cf^q(G)$ be a face containing exactly $q-1$
pairwise disjoint edges $e_1,\ldots,e_{q-1}$.
For $x,y\in V(G)\setminus f$, the following are equivalent:
\begin{enumerate}
\item $\{x,y\}\in E(G)$ or $x\sim_{e_1\cdots e_{q-1}} y$;
\item $\{x,y\}\cup f\notin\cf^q(G)$.
\end{enumerate}
\end{proposition}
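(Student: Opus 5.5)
The plan is to prove (1)$\Rightarrow$(2) by an explicit augmenting-path construction, and (2)$\Rightarrow$(1) by passing to the colon ideal and applying Theorem~\ref{Thm-even}. The second direction needs a small additional hypothesis, explained below. Throughout, set $u=e_1\cdots e_{q-1}$ and $S=\supp(u)\subseteq f$.

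\emph{(1)$\Rightarrow$(2).} If $\{x,y\}\in E(G)$, then $\{e_1,\dots,e_{q-1},\{x,y\}\}$ is a matching of size $q$ in $G[f\cup\{x,y\}]$, since $x,y\notin f$. Otherwise take an even-connection $x=p_0,p_1,\dots,p_{2r+1}=y$ with respect to $u$.
\begin{itemize}
\item Because the $e_i$ are pairwise distinct, condition~(4) of the definition says each $e_i$ is used at most once as $\{p_{2k+1},p_{2k+2}\}$.
\item The $e_i$ are also pairwise disjoint, so the interior vertices $p_1,\dots,p_{2r}$ are distinct and lie in $S\subseteq f$.
\item Since $x,y\notin f$, the whole sequence is a simple path in $G[f\cup\{x,y\}]$ that alternates between non-matching and matching edges.
\end{itemize}
Hence this path is $M$-augmenting for $M=\{e_1,\dots,e_{q-1}\}$. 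The symmetric difference of $M$ with the edge set of the path is a matching of size $q$ in $G[f\cup\{x,y\}]$. Therefore $f\cup\{x,y\}\notin\cf^q(G)$.

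\emph{(2)$\Rightarrow$(1).} Let $w$ be the squarefree monomial with support $f\cup\{x,y\}$. By Proposition~\ref{SR}, $w\in I(G)^{[q]}$. Since $u$ divides $w$, the monomial
\[
w/u \;=\; xy\prod_{z\in f\setminus S} z
\]
lies in $(I(G)^{[q]}:u)$. By Theorem~\ref{Thm-even}, applied with $q-1$ in place of $q$, this colon ideal equals $I(G^{u})$ and is generated in degree $2$. So $\supp(w/u)=(f\setminus S)\cup\{x,y\}$ contains an edge $\{a,b\}$ of $G^{u}$. It remains to force $\{a,b\}=\{x,y\}$, which is exactly statement~(1).
\begin{itemize}
\item If $a,b\in f\setminus S$, the argument of the first part, applied to $f$ itself, produces a $q$-matching inside $G[f]$. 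This contradicts $f\in\cf^q(G)$.
\item If $a=x$ and $b\in f\setminus S$, the same argument produces a $q$-matching inside $G[f\cup\{x\}]$. The symmetric case $a=y$ is identical.
\end{itemize}

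\emph{Main obstacle.} The mixed case $a=x$, $b\in f\setminus S$ cannot be excluded from the hypotheses as stated, and the equivalence fails without more. For example, take $q=2$, $f=\{a,b,c\}$ with the single edge $\{a,b\}$ in $G[f]$, $x$ adjacent only to $c$, and $y$ isolated. Then $f\in\cf^2(G)$ with exactly one disjoint edge and $x,y\notin f$. However, $f\cup\{x,y\}\notin\cf^2(G)$, because $\{a,b\},\{x,c\}$ is a $2$-matching, while $x$ and $y$ are neither adjacent nor even-connected.

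So I would make explicit the hypothesis needed in the applications: $f\cup\{x\}$ and $f\cup\{y\}$ both lie in $\cf^q(G)$, which holds for instance when the statement is used for facets in this way. Under this hypothesis the mixed cases yield contradictions, so $\{a,b\}=\{x,y\}$. By the definition of $G^{u}$, this means $\{x,y\}\in E(G)$ or $x\sim_{u}y$, which is (1).
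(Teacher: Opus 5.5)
With your added hypothesis the proof is correct, and your objection to the statement is right. The preliminaries fix the convention that ``$F$ contains $k$ disjoint edges'' means $G[F]$ has $k$ pairwise disjoint edges. Under that convention the implication (2)$\Rightarrow$(1) is false, and your example ($q=2$, $f=\{a,b,c\}$, $x$ adjacent only to $c$, $y$ isolated) is a valid counterexample.

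The same thing happens inside a whisker graph. Take $H$ on $x_1,x_2,x_3$ with the single edge $\{x_1,x_2\}$, $f=\{x_1,x_2,x_3\}$, $x=y_3$ and $y=y_1$.

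The paper's proof has exactly the gap you isolated. From ``$\{x,y\}\cup f$ contains $q$ disjoint edges'' it passes directly to $xy\in(I(G)^{[q]}:e_1\cdots e_{q-1})$. That step is legitimate when $f=\supp(e_1\cdots e_{q-1})$, because then $w/u=xy$. In general only $xy\prod_{z\in f\setminus S}z$ lies in the colon ideal.

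The narrow reading $f=\supp(e_1\cdots e_{q-1})$ matches the phrase ``a face consisting of exactly $q-1$ pairwise disjoint edges'' in Propositions~\ref{proposition-pure} and~\ref{extension}. It also matches where (2)$\Rightarrow$(1) is actually used: through Corollary~\ref{even-cor} with $f=\mu_k$ in Theorem~\ref{thm-shell}(3). Your extra hypothesis holds automatically in that case, since $|S\cup\{x\}|=2q-1<2q$.

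On the route taken, the two directions compare as follows.
\begin{itemize}
\item For (1)$\Rightarrow$(2), the paper reads $xy\in I(G^{u})=(I(G)^{[q]}:u)$ off Theorem~\ref{Thm-even}. Your augmenting-path construction is self-contained; it is essentially the easy inclusion of that theorem. It shows directly that this direction holds for every face containing the $q-1$ edges.
\item For (2)$\Rightarrow$(1), you follow the same colon-ideal route as the paper but keep track of the cofactor $\prod_{z\in f\setminus S}z$. Your case analysis of a generator $ab$ dividing $w/u$ is sound: you rule out the spurious cases by reapplying the augmenting-path construction inside $f$ or $f\cup\{x\}$.
\end{itemize}

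One slip: the remark that your hypothesis ``holds for instance when the statement is used for facets'' is backwards. If $f$ is a facet, then $f\cup\{x\}\notin\cf^q(G)$ for every $x\notin f$. The instance you want is $f=\supp(e_1\cdots e_{q-1})$.
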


\begin{proof}
Let $x,y\in V(G)\setminus f$.
\vskip 1mm
\noindent
\emph{(1)$\Rightarrow$(2):}
If $\{x,y\}\in E(G)$ or $x\sim_{e_1\cdots e_{q-1}} y$, then
$xy\in I(G^{e_1\cdots e_{q-1}})$.
By Theorem~\ref{Thm-even}, $xy\in (I(G)^{[q]}:e_1 \cdots e_{q-1})$, so $\{x,y\}\cup f$
contains $q$ disjoint edges of $G$.
Hence $\{x,y\}\cup f\notin\cf^q(G)$.
\vskip 1mm
\noindent
\emph{(2)$\Rightarrow$(1):}
If $\{x,y\}\cup f\notin\cf^q(G)$, then this set contains $q$ disjoint edges
of $G$ since $f$ contains exactly $q-1$.
Thus $xy\in (I(G)^{[q]}:e_1 \cdots e_{q-1})$, and by Theorem~\ref{Thm-even},
$xy\in I(G^{e_1\cdots e_{q-1}})$.
Therefore $\{x,y\}\in E(G)$ or $x\sim_{e_1\cdots e_{q-1}} y$.
\end{proof}

As an immediate consequence of Proposition~\ref{even2}, we obtain the following
extension to arbitrary vertex subsets.

\begin{corollary}\label{even-cor}
Let $G$ be a graph, and let $f\in\cf^q(G)$ be a face containing exactly $q-1$
pairwise disjoint edges $e_1,\ldots,e_{q-1}$.
For any $S\subseteq V(G)\setminus f$, the following are equivalent:
\begin{enumerate}
\item There exist $x,y\in S$ such that $\{x,y\}\in E(G)$ or
$x\sim_{e_1\cdots e_{q-1}} y$;
\item $S\cup f\notin\cf^q(G)$.
\end{enumerate}
\end{corollary}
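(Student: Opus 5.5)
The plan is to reduce Corollary~\ref{even-cor} to Proposition~\ref{even2}, using that the property ``$\cdot\cup f$ contains $q$ disjoint edges'' is monotone and is witnessed by a two-element subset. Throughout, $f$ is a face of $\cf^q(G)$ containing exactly $q-1$ disjoint edges $e_1,\dots,e_{q-1}$, and $S\subseteq V(G)\setminus f$.

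\emph{(1)$\Rightarrow$(2).} Suppose $x,y\in S$ satisfy $\{x,y\}\in E(G)$ or $x\sim_{e_1\cdots e_{q-1}}y$. Proposition~\ref{even2} then gives $\{x,y\}\cup f\notin\cf^q(G)$. Since $\{x,y\}\cup f\subseteq S\cup f$ and $\cf^q(G)$ is closed under subsets, it follows that $S\cup f\notin\cf^q(G)$.

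\emph{(2)$\Rightarrow$(1).} Suppose $S\cup f\notin\cf^q(G)$, so $G[S\cup f]$ contains a matching of size $q$. I would argue algebraically through Theorem~\ref{Thm-even}.
\begin{enumerate}
\item The squarefree monomial $u\cdot\prod_{s\in S}s$, with $u=e_1\cdots e_{q-1}$, has support $S\cup f'$, where $f'=\supp(u)\subseteq f$. Since $f$ itself may have more vertices, I would instead consider the monomial $\prod_{v\in S\cup f}v$. It lies in $I(G)^{[q]}$ by Proposition~\ref{SR}.
\item Writing this monomial as $u\cdot w$ with $w=\prod_{v\in S\cup (f\setminus f')}v$, we get $w\in (I(G)^{[q]}:u)=I(G^{e_1\cdots e_{q-1}})$ by Theorem~\ref{Thm-even}.
\item Hence some edge $xy$ of $G^{e_1\cdots e_{q-1}}$ divides $w$, that is, $x,y\in S\cup(f\setminus f')$.
\end{enumerate}

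The main obstacle is to rule out the possibility that $x$ or $y$ lies in $f\setminus f'$. Suppose $x\in f\setminus f'$.
\begin{enumerate}
\item If $y\in f\setminus f'$ as well, then the edge $xy$ of $G^{e_1\cdots e_{q-1}}$ gives $q$ disjoint edges of $G$ inside $\{x,y\}\cup f'\subseteq f$. This contradicts the assumption that $f$ contains only $q-1$.
\item If $y\in S$, then $\{x,y\}\cup f'\subseteq \{y\}\cup f$ contains $q$ disjoint edges, so $\{y\}\cup f\notin\cf^q(G)$. Applying Proposition~\ref{even2} with the pair $\{y,y\}$ is not available, so this case must be handled directly.
\end{enumerate}

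For the second case, I would avoid the issue by choosing the witness carefully. Take a $q$-matching $M$ in $G[S\cup f]$ that uses the fewest vertices of $S$. Since $f$ has matching number $q-1$, $M$ uses at least one vertex of $S$. If the witnessing set meets $S$ in only one vertex $y$, I would take $T=\{y\}\cup f$ and attempt to show that $T$ contains two vertices of $S$ after all, which fails. So the correct argument must use minimality differently: choose a minimal subset $S_0\subseteq S$ with $S_0\cup f\notin\cf^q(G)$.

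If $|S_0|\ge 2$, pick distinct $x,y\in S_0$. Then $\{x,y\}\cup f\notin\cf^q(G)$, and Proposition~\ref{even2} yields (1). If $|S_0|=1$, then (1) cannot hold for a single vertex, so I expect the statement implicitly assumes that $f$ is a facet, or at least that $f\cup\{s\}\in\cf^q(G)$ for all $s\in S$. I would make that hypothesis explicit: under it, $|S_0|\ge2$ and the argument closes. The key step, and the likely sticking point, is exactly this single-vertex extension issue.
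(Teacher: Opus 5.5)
There is a genuine gap in your final argument. After choosing a minimal $S_0\subseteq S$ with $S_0\cup f\notin\cf^q(G)$, you claim that when $|S_0|\ge 2$, any distinct $x,y\in S_0$ satisfy $\{x,y\}\cup f\notin\cf^q(G)$. Minimality gives the opposite whenever $|S_0|\ge 3$: every proper subset of $S_0$, in particular every pair, yields a face. What you actually need is $|S_0|\le 2$, i.e.\ that a $q$-matching of $G[S\cup f]$ can be replaced by one that uses at most two vertices of $S$.

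This is exactly the step the paper's proof rests on. The paper asserts that there are $x,y\in S$ with $\{x,y\}\cup f$ containing $q$ disjoint edges, and then invokes Proposition~\ref{even2}. That assertion needs an argument, for instance via augmenting paths. Let $M$ be a $q$-matching in $G[S\cup f]$ and $M_0=\{e_1,\dots,e_{q-1}\}$. Some component of $M\triangle M_0$ is an $M_0$-augmenting path $P$. Its interior lies in $\supp(M_0)$ and its two endpoints lie outside $\supp(M_0)$. So $M_0\triangle E(P)$ is a $q$-matching meeting $S$ in at most two vertices.

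The difficulty you ran into comes from how you read the hypothesis. In the paper, $f$ is the vertex set of the $q-1$ edges: compare ``a face consisting of exactly $q-1$ pairwise disjoint edges'' in Proposition~\ref{extension}, and $|f|=2(q-1)$ in Theorem~\ref{thm:dim}. Then $f\cup\{s\}$ has $2q-1$ vertices, so a singleton can never witness (2). With $f=f'$, the algebraic route you abandoned is already a complete proof. The monomial $w=\prod_{s\in S}s$ lies in the monomial ideal $I(G^{e_1\cdots e_{q-1}})$, so some generator $xy$ divides $w$. That gives $x,y\in S$ with $\{x,y\}\in E(G)$ or $x\sim_{e_1\cdots e_{q-1}}y$. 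This is tidier than the paper's route, because Theorem~\ref{Thm-even} absorbs the augmenting-path step.

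You are right that the statement fails if $f$ may contain extra vertices. A counterexample is $S=\{y\}$, with $y$ adjacent to some $z\in f\setminus f'$ and to no vertex of $f'$. However, ``$f$ is a facet'' is the wrong repair, since then every singleton is a witness. The repair ``$f\cup\{s\}\in\cf^q(G)$ for all $s\in S$'' does work. Under it, your own case analysis already finishes the algebraic argument, because that hypothesis excludes precisely your second case. The detour through $S_0$ is what breaks the proof.
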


\begin{proof}
\emph{(1)$\Rightarrow$(2)} follows from Proposition~\ref{even2}.
For \emph{(2)$\Rightarrow$(1)}, assume $S\cup f\notin\cf^q(G)$.
Then $S\cup f$ contains at least $q$ disjoint edges of $G$.
Since $f$ contains exactly $q-1$ disjoint edges, there exist $x,y\in S$
such that $\{x,y\}\cup f$ contains $q$ disjoint edges.
Proposition~\ref{even2} yields $\{x,y\}\in E(G)$ or
$x\sim_{e_1\cdots e_{q-1}} y$.
\end{proof}

\section{Purity of Matching-Free Complexes}\label{pure}
In this section, we study the  purity of $q$-matching-free
complexes associated to whisker graphs.
We begin by fixing notation that will be used throughout the section.

\begin{setup}\label{setup-partition}
Let $H$ be a graph with vertex set $V(H)=\{x_1,\ldots,x_n\}$, and let
$G=W(H)$ be its whisker graph, defined by
\[
V(G)=V(H)\cup\{y_1,\ldots,y_n\},~
E(G)=E(H)\cup\bigl\{\{x_i,y_i\}\mid 1\le i\le n\bigr\}.
\]
The vertices $y_1,\ldots,y_n$ are called whisker vertices, and the edges
$\{x_i,y_i\}$ are called whisker edges.
Let $f\in\cf^q(G)$ be a face containing exactly $q-1$ pairwise disjoint edges
$e_1,\ldots,e_{q-1}$.
Assume
$e_1,\ldots,e_m\in E(H),$
$e_{m+1},\ldots,e_{q-1}\in W,$
where $W$ denotes the set of whisker edges of $G$.
Write $e_i=\{x_{i_1},x_{i_2}\}$ for $1\le i\le m$, and define
\[
Y:=\bigcup_{i=1}^m\{y_{i_1},y_{i_2}\},~
N:=\bigl\{\{x_i,y_i\}\mid x_i\in N_G(f)\cap V(H)\bigr\},~
S:=V(G)\setminus(f\cup Y\cup N).
\]
Then $\{Y,N,S\}$ is a partition of $V(G)\setminus f$.
\end{setup}

The next result provides an explicit count of the vertices of a facet of
$\cf^q(G)$ lying outside a fixed $(q-1)$-matching.

\begin{proposition}\label{proposition-pure}
Let $G = W(H)$ be a whisker graph as in Setup~\ref{setup-partition}, let $F$ be a facet of $\cf^q(G)$, and let $f \subset F$ be a face consisting of exactly $q-1$ pairwise disjoint edges.
Suppose $f$ contains $m$ edges of $H$, and let $Y$, $N$, and $S$ be as in Setup~\ref{setup-partition}.
Then
\[
|F \cap N| + |F \cap S|
\;=\;
\dim\!\bigl(\cf^1(G[N])\bigr)
+ \dim\!\bigl(\cf^1(G[S])\bigr)
+ 2
\;=\;
n - m - q + 1.
\]
\end{proposition}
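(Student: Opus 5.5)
The plan is to show that $F$ meets each whisker pair in $N\cup S$ in exactly one vertex, and then count pairs. First I fix the bookkeeping. The set $f$ consists of $q-1$ disjoint edges: $m$ edges of $H$ and $q-1-m$ whisker edges. Hence $f$ contains exactly $2m+(q-1-m)=m+q-1$ vertices of $H$. Since $|f|=2(q-1)$, the matching number of $G[f]$ is exactly $q-1$, so $f$ meets the hypothesis of Proposition~\ref{even2} and Corollary~\ref{even-cor}.

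By construction, $N\cup S$ is a union of whisker pairs $\{x_i,y_i\}$, namely one for each $x_i\in V(H)\setminus f$. So it consists of exactly $n-(m+q-1)$ disjoint whisker edges. Each of $G[N]$ and $G[S]$ is then a perfect matching on its vertex set. The independence complex of a perfect matching with $k$ edges is pure of dimension $k-1$. This gives the second equality at once: $\dim\cf^1(G[N])+\dim\cf^1(G[S])+2$ equals the number of these pairs, which is $n-m-q+1$.

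\textbf{Upper bound.} Since $F\in\cf^q(G)$ contains $f$, for any two vertices $x,y\in F\setminus f$ we have $\{x,y\}\cup f\subseteq F\in\cf^q(G)$. Proposition~\ref{even2} then says $x,y$ are neither adjacent nor even-connected with respect to $e_1\cdots e_{q-1}$. In particular $F\cap(N\cup S)$ is independent in $G$. It therefore contains at most one vertex from each whisker pair $\{x_i,y_i\}$, so $|F\cap N|+|F\cap S|\le n-m-q+1$.

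\textbf{Lower bound.} Suppose some pair $\{x_i,y_i\}\subseteq N\cup S$ is disjoint from $F$; I want to contradict maximality by adding $y_i$. The only neighbour of $y_i$ in $G$ is $x_i$, and $x_i\notin F$. Any even-connection $y_i=p_0,p_1,\dots$ needs $p_1$ to be an endpoint of some $e_k$. But $p_1$ must equal $x_i$, and $x_i\notin f$, so $y_i$ is even-connected to no vertex. Applying Corollary~\ref{even-cor} to the set $(F\setminus f)\cup\{y_i\}$ shows $F\cup\{y_i\}\in\cf^q(G)$. This contradicts $F$ being a facet. Hence every pair meets $F$, and equality holds in the upper bound.

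The step I expect to need the most care is the upper-bound step. One must check that Corollary~\ref{even-cor} and Proposition~\ref{even2} really apply with this $f$, which comes down to $f$ having matching number exactly $q-1$. One must also note that vertices of $F\cap Y$ play no role in the count, since the claimed equality only involves $N$ and $S$. The lower-bound step relies only on the simple observation that a whisker vertex whose stem lies outside $f$ can never start an even-connection.
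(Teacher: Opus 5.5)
Your argument follows the paper's route. Both identify $N\cup S$ as the union of the $n-m-q+1$ whisker pairs $\{x_i,y_i\}$ with $x_i\in V(H)\setminus f$, show that $F$ meets each such pair in exactly one vertex, and compare this count with the dimensions of the two independence complexes. You go further than the paper, which only asserts the ``exactly one vertex per pair'' fact; your upper and lower bounds justify it correctly. The even-connection machinery is more than you need there. If $x_i,y_i\in F$, then $f\cup\{x_i,y_i\}$ already carries $q$ disjoint edges. If neither lies in $F$, then $y_i$ is an isolated vertex of $G[F\cup\{y_i\}]$, so adding it cannot raise the matching number.

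One step is false as written: $G[N]$ and $G[S]$ are not perfect matchings in general. The induced subgraph keeps every edge of $H$ between two vertices $x_a,x_b\in N\cap V(H)$, and likewise for $S$. So $G[N]$ can contain, for example, the path $y_a,x_a,x_b,y_b$. In general $G[N]=W\bigl(H[N\cap V(H)]\bigr)$ and $G[S]=W\bigl(H[S\cap V(H)]\bigr)$ are whisker graphs, as the paper says.

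The value you need is still correct, but for a different reason. In a whisker graph with $k$ whisker pairs, an independent set contains at most one vertex of each whisker edge, and the $k$ whisker vertices are themselves independent. So the independence number is $k$ and $\dim\cf^1=k-1$. Replace your ``perfect matching'' sentence with this observation and the proof is complete.
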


\begin{proof}
Let $w_1$ and $w_2$ denote the numbers of whisker vertices in $G[N]$ and $G[S]$, respectively.
Because $G[N]$ and $G[S]$ are themselves whisker graphs, their independence complexes satisfy
$\dim\!\bigl(\cf^1(G[N])\bigr) + 1 = w_1,$
$\dim\!\bigl(\cf^1(G[S])\bigr) + 1 = w_2,$
hence
\[
\dim\!\bigl(\cf^1(G[N])\bigr) + \dim\!\bigl(\cf^1(G[S])\bigr) + 2 = w_1 + w_2. \tag{1}
\]

Recall that $N$ and $S$ are unions of whisker pairs $\{x_i, y_i\}$.
Since $F$ is a facet of $\cf^q(G)$, it contains exactly one vertex from each such pair; therefore
\[
|F \cap N| + |F \cap S| = w_1 + w_2. \tag{2}
\]

Now, the face $f$ uses $2m$ vertices of $H$ from its $m$ edges lying inside $H$, together with $q-1-m$ vertices of $H$ from its whisker edges.
Consequently $f$ occupies $m + q - 1$ vertices of $H$ in total.
For each of the remaining $n - (m + q - 1) = n - m - q + 1$ vertices of $H$, exactly one of this vertex or its corresponding whisker vertex belongs to $F \cap (N \cup S)$; thus
\[
|F \cap N| + |F \cap S| = n - m - q + 1. \tag{3}
\]

Combining (1), (2), and (3) yields the required equalities.
\end{proof}

\begin{proposition}\label{extension}
Let $G = W(H)$ be a whisker graph as in Setup~\ref{setup-partition}, let $f \in \cf^q(G)$ be a face consisting of exactly $q-1$ pairwise disjoint edges, and assume $f$ contains $m$ edges of $H$.
Let $Y$ be as in Setup~\ref{setup-partition}.
If $S \subset Y$ is a subset of vertices that are pairwise not even‑connected, then there exists $\bar S$ with $S \subseteq \bar S \subset Y$ such that $|\bar S| = m$ and the vertices of $\bar S$ are pairwise not even‑connected.
\end{proposition}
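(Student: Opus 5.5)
The plan is to translate pairwise non-even-connectedness into a matching condition, and then build $\bar S$ greedily, one edge $e_i$ at a time.

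\emph{Step 1 (translation).} First record the basic observation. Each $y\in Y$ is a leaf of $G$ whose unique neighbour $x$ lies in $f$, and $x$ is an endpoint of some $e_i\subseteq E(H)$. So any even-connection starting at $y$ begins $y,x,x'$, where $e_i=\{x,x'\}$. In particular $y_{i_1}\sim_{e_1\cdots e_{q-1}} y_{i_2}$ via the connection $y_{i_1},x_{i_1},x_{i_2},y_{i_2}$ with $r=1$. Consequently a pairwise non-even-connected subset of $Y$ meets each pair $\{y_{i_1},y_{i_2}\}$ at most once, so it has at most $m$ elements. It therefore suffices to show that $S$ can be enlarged by one vertex from each pair it does not yet meet.

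By Corollary~\ref{even-cor}, a set $T\subseteq Y$ is pairwise non-even-connected if and only if $f\cup T\in\cf^q(G)$, i.e.\ $\nu(G[f\cup T])=q-1$. This lets us argue with matchings and symmetric differences rather than with even-connections directly.

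\emph{Step 2 (greedy step).} Let $T\supseteq S$ be pairwise non-even-connected, and let $e_i$ be an edge with $T\cap\{y_{i_1},y_{i_2}\}=\emptyset$. Suppose, for a contradiction, that neither $y_{i_1}$ nor $y_{i_2}$ can be added to $T$. Put $K=G[f\cup T\cup\{y_{i_1},y_{i_2}\}]$. Then there are matchings $N_1$ of $K\setminus y_{i_2}$ and $N_2$ of $K\setminus y_{i_1}$, each of size $q$. Each $N_j$ must cover its whisker vertex, since otherwise $G[f\cup T]$ would contain a $q$-matching; hence $\{x_{i_1},y_{i_1}\}\in N_1$ and $\{x_{i_2},y_{i_2}\}\in N_2$. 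Moreover $x_{i_2}$ is covered by $N_1$, since otherwise $N_1-\{x_{i_1},y_{i_1}\}+e_i$ is a $q$-matching in $G[f\cup T]$; symmetrically $x_{i_1}$ is covered by $N_2$.

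Now look at the components $P_1\ni y_{i_1}$ and $P_2\ni y_{i_2}$ of $N_1\triangle N_2$; these are alternating paths. If $P_1$ has equally many $N_1$- and $N_2$-edges, then $N_1\triangle P_1$ is a $q$-matching avoiding both whisker vertices, a contradiction. The symmetric statement holds for $P_2$. If $P_1=P_2$, the path runs $y_{i_1},x_{i_1},\dots,x_{i_2},y_{i_2}$. In that case I would reroute through $e_i$: keep the $N_1$-edges off the path, and on the interior segment from $x_{i_1}$ to $x_{i_2}$ combine $e_i$ with an appropriate alternating choice. The aim is again a $q$-matching inside $f\cup T$.

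\emph{Main obstacle.} The hard case is when $P_1\ne P_2$ and both are odd, i.e.\ $N_1$ has one extra edge on $P_1$ and $N_2$ has one extra edge on $P_2$. Pure exchange gives only a $(q+1)$-matching of $K$, which is not a contradiction by itself.

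Here I would use the whisker structure. Every vertex of $T$ is a leaf hanging off a vertex of $f$. The far endpoint of $P_1$ lies in $f\cup T$ and is covered only by $N_1$. One should then be able to trace the alternating path back to an edge $e_j$ whose whisker lies in $T$. The goal is to exhibit an even-connection between two vertices of $T$, or between a vertex of $T$ and the partner of an already used whisker, and so contradict the hypothesis on $T$. Concretely, I would first try to show that the far end of $P_1$ must be some $y_{j_k}\in T$, and then splice $P_1$, $e_i$ and $P_2$ into a single even-connection between two elements of $T$. If that splicing cannot be made to respect condition (4) on repeated edges, this is exactly where the argument needs care, and where I expect the real difficulty to lie.

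Once Step 2 is established, iterating it over the $m-|S|$ unrepresented edges produces the required $\bar S$ with $S\subseteq\bar S\subset Y$ and $|\bar S|=m$.
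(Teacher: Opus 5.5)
\textbf{There is a genuine gap.} It sits in the case you treat most lightly, $P_1=P_2$, not in the case you flag. In fact the statement is false without an extra hypothesis, so no completion of Step~2 in full generality can exist.

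\emph{Counterexample.} Take $H$ to be the paw on $x_1,\dots,x_4$ with edges $x_1x_2,x_2x_3,x_2x_4,x_3x_4$. Let $q=3$ and $f=\{x_1,x_2,x_3,x_4\}$ with $e_1=\{x_1,x_2\}$, $e_2=\{x_3,x_4\}$, so $m=2$. Take $S=\{y_1\}$. Then
\begin{itemize}
\item $y_1\sim y_2$ via $y_1,x_1,x_2,y_2$;
\item $y_1\sim y_3$ via $y_1,x_1,x_2,x_4,x_3,y_3$;
\item $y_1\sim y_4$ via $y_1,x_1,x_2,x_3,x_4,y_4$.
\end{itemize}
So no $\bar S\supseteq S$ of size $2$ exists. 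Equivalently, $\{x_1,x_2,x_3,x_4,y_1\}$ is a facet of $\cf^3(G)$ meeting $Y$ in a single vertex.

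\emph{Where your argument breaks.} In your notation, with $T=\{y_1\}$ and $e_i=e_2$, the matchings $N_1=\{x_1y_1,x_2x_4,x_3y_3\}$ and $N_2=\{x_1y_1,x_2x_3,x_4y_4\}$ are forced. Their symmetric difference $N_1\triangle N_2$ is the single path $y_3,x_3,x_2,x_4,y_4$. No rerouting through $e_2$ can produce a $3$-matching in the five-vertex set $f\cup T$.

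The general mechanism is as follows. When $P_1=P_2$, the path runs from an $N_1$-edge to an $N_2$-edge, so it has even length. Its interior from $x_{i_1}$ to $x_{i_2}$ consists of non-leaf vertices, lies in $H$, and has positive even length. Together with $e_i$ it closes an odd cycle. This odd cycle (a blossom) is exactly what defeats the exchange argument.

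The paper's concatenation argument has the same hole. It assumes the two even-connections end at distinct $y_a\neq y_b$, and that splicing them respects condition (4). In this example both end at $y_1$, and the spliced walk $y_1,x_1,x_2,x_4,x_3,x_2,x_1,y_1$ uses $e_1$ twice.

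\textbf{The case you call the main obstacle is immediate.} Suppose $P_1\neq P_2$ and both are odd. Then $N_2\triangle P_1$ is a matching of size $q+1$ containing $\{x_{i_1},y_{i_1}\}$. It still contains $\{x_{i_2},y_{i_2}\}$, because that edge lies on $P_2$, not on $P_1$. Replacing these two edges by $e_i$ gives a $q$-matching inside $f\cup T$, a contradiction.

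\textbf{How to repair it.} Your symmetric-difference framework proves the statement once odd cycles are excluded. The needed extra hypothesis is that $H[\supp(e_1\cdots e_m)]$ has no odd cycle.
\begin{itemize}
\item First discard the whisker edges of $f$. This is harmless, since a leaf edge always lies in some maximum matching, so $\nu(G[f\cup T])=(q-1-m)+\nu(G[\supp(e_1\cdots e_m)\cup T])$.
\item The hypothesis holds wherever the paper uses the proposition. In Proposition~\ref{pure-m} and in the last case of Theorem~\ref{pure-range} one has $2m\le \ell-1<\ell$. The bipartite case of Theorem~\ref{pure-range} is trivial.
\end{itemize}
You should state the proposition with this hypothesis and replace the vague rerouting step by the parity argument above, which shows $P_1=P_2$ cannot occur.
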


\begin{proof}
Clearly $|S| \le m$, and the statement holds trivially if $|S| = m$.
Assume $|S| = t < m$ and write $S = \{y_1,\dots,y_t\}$.
There exists an edge $e_i = \{x_{i_1},x_{i_2}\} \in \{e_1,\dots,e_m\}$ whose whisker vertices $y_{i_1},y_{i_2}$ are both not in $S$.

We claim that at least one of the sets $S \cup \{y_{i_1}\}$ or $S \cup \{y_{i_2}\}$ consists of pairwise non‑even‑connected vertices (with respect to $e_1\dots e_m$).
Otherwise, there exist distinct $y_a,y_b \in S$ such that $y_{i_1} \sim_{e_1\dots e_m} y_a$ and $y_{i_2} \sim_{e_1\dots e_m} y_b$.
Consider even‑connections
$y_{i_1}=p_0,p_1,\dots,p_{2r_1},p_{2r_1+1}=y_a$ for some  $r_1 \ge 1$
and $y_{i_2}=q_0,q_1,\dots,q_{2r_2},q_{2r_2+1}=y_b$ for some $r_2 \ge 1$,
where each odd‑indexed edge $\{p_{2k+1},p_{2k+2}\}$ and $\{q_{2\ell+1},q_{2\ell+2}\}$ belongs to $\{e_1,\dots,e_m\}$.

Because $y_{i_1}$ and $y_{i_2}$ are whiskers attached only to $x_{i_1}$ and $x_{i_2}$ respectively, we must have
$\{p_1,p_2\} = \{q_1,q_2\} = e_i.$
Concatenating the two even‑connections then yields an even‑connection between $y_a$ and $y_b$, contradicting the assumption that $S$ contains no even‑connected pair.
Thus one of the two extensions $S \cup \{y_{i_1}\}$ or $S \cup \{y_{i_2}\}$ has all vertices pairwise non‑even‑connected.

Assume without loss of generality that $S \cup \{y_{i_1}\}$ satisfies this property.
If $|S \cup \{y_{i_1}\}| = m$, set $\bar S = S \cup \{y_{i_1}\}$.
Otherwise, replace $S$ by $S \cup \{y_{i_1}\}$ and repeat the argument.
Since we strictly increase the size of $S$ at each step, after finitely many iterations we obtain a set $\bar S$ of size $m$ whose vertices are pairwise not even‑connected, and $S \subseteq \bar S \subset Y$ by construction.
\end{proof}

 In~\cite{FM24}, it was shown that if $G$ is a Cohen-Macaulay very
well-covered graph, then
\[
\dim \!\left(\frac{S}{I(G)^{[q]}}\right)
=
\frac{|V(G)|}{2} + q - 1
 \text{ for all } 1 \le q \le \nu(G).
\]
Since whisker graphs are Cohen-Macaulay and very well-covered,
this formula applies to them in particular.
The next theorem determines the dimension of the matching-free complex
$\cf^q(G)$ for an arbitrary whisker graph. Although this value can be
recovered from the algebraic result above, we provide a direct combinatorial
argument. The proof is substantially different in nature and highlights
the intrinsic combinatorial structure of the complex.

\begin{theorem}\label{thm:dim}
Let $G = W(H)$ be a whisker graph with $|V(H)| = n$ and let $q \ge 1$. Then
\[
\dim\!\bigl(\cf^q(G)\bigr) = n + q - 2.
\]
\end{theorem}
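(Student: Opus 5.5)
Presumably the statement is meant for $1\le q\le\nu(G)=n$; for $q>\nu(G)$ the complex is the full simplex on $2n$ vertices. I will prove $\dim\cf^q(G)=n+q-2$ by showing two things. First, every face of $\cf^q(G)$ has at most $n+q-1$ vertices. Second, some face has exactly $n+q-1$ vertices.

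\emph{Lower bound.} Take the face
\[
F_0=\{x_1,\dots,x_{q-1},y_1,\dots,y_{q-1}\}\cup\{y_q,\dots,y_n\}\cup\{x_q\}.
\]
It has $2(q-1)+(n-q+1)+1=n+q$ vertices, which is one too many, so I will drop $x_q$. The resulting set $F_1=\{x_i,y_i\mid i<q\}\cup\{y_j\mid j\ge q\}$ has $n+q-1$ vertices. To see that $F_1\in\cf^q(G)$, note that every edge of $G[F_1]$ meets $\{x_1,\dots,x_{q-1}\}$: the vertices $y_j$ with $j\ge q$ are isolated in $G[F_1]$, and each $y_i$ with $i<q$ is adjacent only to $x_i$. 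Hence $\nu(G[F_1])\le q-1$, and this gives $\dim\cf^q(G)\ge n+q-2$.

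\emph{Upper bound.} Let $F\in\cf^q(G)$, so $\nu(G[F])\le q-1$. For each $i$, let $F_i=F\cap\{x_i,y_i\}$, which has size $0$, $1$ or $2$. Let $P=\{i\mid F_i=\{x_i,y_i\}\}$. Then the whisker edges $\{x_i,y_i\}$ with $i\in P$ form a matching in $G[F]$, so $|P|\le q-1$. Consequently
\[
|F|\le n+|P|\le n+q-1.
\]
This settles the upper bound directly, without needing Proposition~\ref{proposition-pure}.

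The only delicate point is the case $q>\nu(G)=n$. There the complex is the full simplex on $2n$ vertices, with dimension $2n-1$. This agrees with $n+q-2$ only when $q=n+1$, so the theorem should be read with $q\le\nu(G)$, and I would state that restriction explicitly. Within that range the argument above is complete. The main obstacle is simply checking that the face $F_1$ constructed in the lower bound genuinely avoids a $q$-matching, and the observation that every edge of $G[F_1]$ passes through one of the $q-1$ vertices $x_1,\dots,x_{q-1}$ handles this.
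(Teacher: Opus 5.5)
Your proof is correct, and it takes a different, more elementary route than the paper. The paper argues facet by facet. It fixes a facet $F$ and a $(q-1)$-matching $f\subset F$ with $m$ edges in $H$, and partitions $V(G)\setminus f$ into $Y,N,S$ as in Setup~\ref{setup-partition}. It then invokes Proposition~\ref{proposition-pure}, which says every whisker pair in $N\cup S$ meets a facet in exactly one vertex. This gives $|F| = n+q-1-m+|F\cap Y|$, and the value $n+q-1$ is realized by a facet built on $q-1$ whisker edges ($m=0$). You instead bound every face at once by pigeonholing on the perfect matching of whisker edges, $|F|\le n+|P|\le n+q-1$. You then exhibit an explicit face of size $n+q-1$ whose edges are all covered by $x_1,\dots,x_{q-1}$.

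Your upper bound is in fact cleaner than the paper's write-up. The paper only displays $|F|\ge n+q-1-m$ and then asserts that the maximum occurs at $m=0$. That conclusion tacitly needs $|F\cap Y|\le m$, which is true because $y_{i_1},y_{i_2}$ cannot both lie in $F$, but it is not stated. Your argument needs no facets, no even-connections, and no Proposition~\ref{proposition-pure}. What the paper's route buys is the exact facet-size formula in terms of $m$ and $|F\cap Y|$. That formula is reused for the purity characterization (Proposition~\ref{pure-m}, Theorem~\ref{pure-range}) and for the minimal-facet computations in the depth section; your argument yields only the maximum.

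Your caveat about the range is also right. The statement fails for $q\ge n+2$. The paper's own proof tacitly assumes $q\le n+1$ in its first sentence (``every facet contains exactly $q-1$ pairwise disjoint edges''), since for larger $q$ the unique facet $V(G)$ contains only $n<q-1$ such edges. A minor point of exposition: the detour through $F_0$, which is not a face because it contains the $q$-matching including $\{x_q,y_q\}$, is unnecessary, and you can state $F_1$ directly.
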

\begin{proof}
Every facet of $\cf^q(G)$ contains exactly $q-1$ pairwise disjoint edges.
Let $F$ be a facet and let $f\subset F$ be a face consisting of 
$q-1$ disjoint edges. Suppose that $f$ contains $m$ edges from $H$.
Define $Y,N,S$ as in Setup~\ref{setup-partition}.
By Proposition~\ref{proposition-pure},
$|F\cap N|+|F\cap S| = n-m-q+1.$
Since $|f|=2(q-1)$, we obtain
\[
|F|
\geq
|f|+|F\cap N|+|F\cap S|
=
2(q-1)+(n-m-q+1)
=
n+q-1-m.
\]
If $m = 0$, then the maximal possible value of $|F|$ occurs, since $m \ge 0$.
Such a choice is indeed possible by taking $F$ to consist entirely of whisker edges, which are pairwise disjoint. Hence we obtain
$\max |F| = n + q - 1.$
Therefore,
$\dim\big(\cf^q(G)\big)
=
(n+q-1)-1
=
n+q-2.$
\end{proof}

\begin{proposition}\label{pure-m}
Let $G = W(H)$ be a whisker graph as in Setup~\ref{setup-partition}, and let $\ell$ denote the length of the smallest odd cycle in $H$. 
Let $F \in \cf^q(G)$ be a facet, and let $f \subset F$ be a face containing $q-1$ disjoint edges of $F$. 
Let $m$ be the number of disjoint edges of $H$ contained in $f$. 
If 
$m < \left\lfloor \frac{\ell}{2} \right\rfloor,$
then
$|F| = \dim\big(\cf^q(G)\big) + 1 = n + q - 1.$
\end{proposition}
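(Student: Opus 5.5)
Write $V(G)\setminus f = Y\sqcup N\sqcup S$ as in Setup~\ref{setup-partition}. Then
\[
|F| = |f| + |F\cap Y| + |F\cap N| + |F\cap S| = 2(q-1) + |F\cap Y| + (n-m-q+1)
\]
by Proposition~\ref{proposition-pure}. Since $|F|\le n+q-1$ by Theorem~\ref{thm:dim}, it suffices to show $|F\cap Y| = m$.

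\emph{Upper bound.} By Corollary~\ref{even-cor}, the vertices of $F\cap Y$ are pairwise non-adjacent and pairwise not even-connected with respect to $e_1\cdots e_{q-1}$. For each $H$-edge $e_i=\{x_{i_1},x_{i_2}\}$ of $f$, the two whiskers $y_{i_1},y_{i_2}$ are even-connected via the path $y_{i_1},x_{i_1},x_{i_2},y_{i_2}$. Hence $F\cap Y$ contains at most one of them, so $|F\cap Y|\le m$.

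\emph{Lower bound.} Here the odd girth hypothesis enters. First I show that $Y$ with the pairs $\{y_{i_1},y_{i_2}\}$ removed contains no even-connected pair. Suppose $y_a\sim y_b$ with $y_a,y_b$ whiskers of distinct edges $e_i\ne e_j$. Since whiskers are leaves, the connection begins $y_a,x_a$ and ends $x_b,y_b$, and its interior lies in $H$. It alternates non-matching and matching edges, using each $e_k$ at most once. The whisker edges $e_{m+1},\dots,e_{q-1}$ cannot occur in the interior, because their $y$-endpoints are leaves and would end the path. So the interior uses only edges $e_1,\dots,e_m$, with $e_i$ first and $e_j$ last.

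If the walk repeats a vertex, a standard parity argument extracts an odd closed walk, hence an odd cycle, using at most $m$ matching edges. Alternation forces its length to be at most $2m+1$. Since $m<\lfloor \ell/2\rfloor$, we have $2m+1<\ell$, a contradiction. I expect this cycle-extraction step, ensuring the cycle is odd and short, to be the main obstacle. I would handle it by choosing a shortest even-connection and examining the first repeated vertex.

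If the walk is a genuine path, I instead aim to show directly that adding $y_a$ to $F$ preserves the face property. This should follow from an augmenting-path count: no $q$-matching can appear unless an odd structure arises. With that in hand, Proposition~\ref{extension} extends $F\cap Y$ to a non-even-connected set of size $m$. Maximality of the facet $F$, combined with Corollary~\ref{even-cor}, then forces $|F\cap Y|=m$, and therefore $|F|=n+q-1$.

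Honestly, the path case looks delicate: whiskers of distinct matched edges can be even-connected through a path in bipartite $H$. So the cleaner route may be an exchange argument. If $|F\cap Y|<m$, pick an edge $e_i$ with both whiskers absent from $F$. Then choose one whisker that is not even-connected to $F\cap Y$, using the concatenation argument in the proof of Proposition~\ref{extension}, and add it to $F$, contradicting maximality. The odd-cycle bound is needed only to rule out the case where both whiskers of $e_i$ are blocked by the same vertex, since that configuration yields an odd closed walk of length at most $2m+1<\ell$.
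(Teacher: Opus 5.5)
Your reduction to $|F\cap Y|=m$ and the bound $|F\cap Y|\le m$ are correct. The exchange idea is also a sensible route: take an $H$-edge $e_i$ of $f$ with both whiskers outside $F$ and show one of them can be added. However, the exchange step has a genuine gap.

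A smaller issue first. By Corollary~\ref{even-cor}, a whisker $y$ can be added to $F$ only if it is not even-connected to \emph{any} vertex of $F\setminus f$, not merely to $F\cap Y$. Vertices of $H$ in $F$ that are not covered by $e_1,\dots,e_{q-1}$ also block whiskers: if $x_{i_2}$ is adjacent to such an $x$, then $y_{i_1},x_{i_1},x_{i_2},x$ is an even-connection. This is what the paper's Case~2 ($\bar V\neq\emptyset$) addresses, and you must let the blockers range over all of $F\setminus f$.

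The serious problem is the case of distinct blockers. Suppose $y_{i_1}\sim u$ and $y_{i_2}\sim w$ with $u\neq w$. Gluing the two even-connections along $e_i$ gives an alternating walk from $u$ to $w$. This walk is an even-connection only if the two paths share no edge of $f$ other than $e_i$; otherwise condition~(4) of the definition fails and nothing is contradicted.

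This does happen. Let $H$ have edges $ab,ar,br,rs,sc,cd$ and let $q=4$.
\begin{itemize}
\item $F=\{a,b,r,s,c,d,y_s,y_d\}$ is a facet of $\cf^4(W(H))$, and $f=\{a,b,r,s,c,d\}$ carries the matching $ab,rs,cd$.
\item Both whiskers of $ab$ are missing from $F$.
\item $y_a\sim y_s$ via $y_a,a,b,r,s,y_s$, and $y_b\sim y_d$ via $y_b,b,a,r,s,c,d,y_d$.
\item The glued walk uses $rs$ twice, and indeed $y_s\not\sim y_d$.
\end{itemize}
Here $m=3$, so the proposition itself is not contradicted; the point is that the implication $u\neq w\Rightarrow u\sim w$ is false in general. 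So the odd-girth hypothesis must also be invoked when $u\neq w$ but the two paths overlap, not only when $u=w$. Handling this directly requires a blossom-type analysis of where the paths first meet.

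A cleaner way to close the gap uses the Gallai--Edmonds decomposition.
\begin{itemize}
\item For $x_j\in F$ and $y_j\notin F$, one has $F\cup\{y_j\}\notin\cf^q(G)$ if and only if some maximum matching of $G[F]$ misses $x_j$.
\item If both whiskers of $e_i$ are blocked, both endpoints of $e_i$ lie in the Gallai--Edmonds set $D(G[F])$. Being adjacent, they lie in one factor-critical component $C$ with $|C|\ge 3$.
\item $C$ contains no whisker vertex, since whiskers have degree at most $1$ in $G[F]$.
\item The matching $e_1,\dots,e_{q-1}$ induces a near-perfect matching of $C$ by edges of $H$, so $|C|\le 2m+1$.
\item $C$ is factor-critical with at least $3$ vertices, hence not bipartite. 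This gives an odd cycle of length at most $2m+1<\ell$, a contradiction.
\end{itemize}
Hence one whisker of every $H$-edge of $f$ lies in $F$, i.e.\ $|F\cap Y|\ge m$.
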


\begin{proof}
Let $f \subset F$ be a face containing exactly $q-1$ disjoint edges, 
among which $m$ belong to $E(H)$. 
Define $Y$, $N$, and $S$ as in Setup~\ref{setup-partition}. 
By Proposition~\ref{proposition-pure}, we have
\[
|f| + |F \cap N| + |F \cap S|
= 2(q-1) + (n - q + 1 - m)
= n + q - 1 - m.
\]
Thus, it suffices to show that $|F \cap Y| = m$.
If $m = 0$, the conclusion follows immediately. 
Assume $m > 0$, and let $e_1, \ldots, e_m$ be the edges of $H$ contained in $f$. 
Let 
$\bar V \subseteq N \cap V(H)$
be the set of vertices adjacent to some vertex in 
$\supp(e_1 \cdots e_m)$.

\medskip
\noindent
\textit{Case 1: $\bar V = \emptyset$.}
By Proposition~\ref{extension}, we may choose 
$S_1 \subset Y$ with $|S_1| = m$ such that no two vertices of $S_1$ are 
even-connected with respect to $e_1 \cdots e_m$. 
Since $\bar V = \emptyset$, and $F$ is a facet, one of such $S_1$ must be contained in $F$.
Hence $|F \cap Y| = m$.

\medskip
\noindent
\textit{Case 2: $\bar V \neq \emptyset$.}
Let 
$\{x_{i_1}, \ldots, x_{i_t}\} \subset \supp(e_1 \cdots e_m)$
be the vertices adjacent to vertices of $\bar V$, and define
$\bar S := \{y_{i_j} \mid 1 \le j \le t\} \subseteq Y.$
Since $m < \left\lfloor \frac{\ell}{2} \right\rfloor$, 
the induced subgraph 
$H\big[\supp(e_1 \cdots e_m) \cup \bar V\big]$
contains no odd cycle. 
Moreover, by Corollary~\ref{even-cor}, no two vertices of $\bar V$ are 
mutually even-connected with respect to $e_1, \ldots, e_m$.

Suppose that two vertices $y, y' \in \bar S$ are even-connected with respect to $e_1 \cdots e_m$. 
Then either

\begin{itemize}
\item there exist $x, x' \in \bar V$ such that 
$x \sim_{e_1 \cdots e_m} x'$, or
\item there exists $x \in \bar V$ such that 
$H[\supp(e_1 \cdots e_m) \cup \{x\}]$ contains an odd cycle.
\end{itemize}

Both possibilities contradict the previous observations. 
Hence the vertices of $\bar S$ are pairwise not even-connected.

If $|\bar S| = m$, we are done. 
Otherwise, by Proposition~\ref{extension}, we can extend $\bar S$ to a set 
$S_1 \subset Y$ with $|S_1| = m$ such that no two vertices of $S_1$ are 
even-connected with respect to $e_1 \cdots e_m$. 
Thus $|F \cap Y| = m$.
Therefore,
$|F| = n + q - 1 = \dim(\cf^q(G)) + 1.$
\end{proof}

We now state the main result of this section, which gives a complete characterization of the purity of 
$\cf^q(G)$ in terms of the smallest odd cycle of $H$.
\begin{theorem}\label{pure-range}
Let $G = W(H)$ be a whisker graph with $|V(H)| = n$. 
If $H$ has no odd cycle, then $\cf^q(G)$ is pure. 
Otherwise, let $\ell$ be the length of the smallest odd cycle in $H$. 
Then $\cf^q(G)$ is pure if and only if
$1 \leq q < \Big\lceil \frac{\ell}{2} \Big\rceil$ 
or
$ n - \Big\lfloor \frac{\ell}{2} \Big\rfloor<q \leq \nu(G)$
\end{theorem}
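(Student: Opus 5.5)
The plan has three parts. All facets have the form $|F| = |f|+|F\cap N|+|F\cap S|+|F\cap Y| = n+q-1-m+|F\cap Y|$ (Proposition~\ref{proposition-pure}), and $|F\cap Y|\le m$ by Proposition~\ref{extension}. So by Theorem~\ref{thm:dim}, purity is equivalent to $|F\cap Y|=m$ for every facet $F$ and every choice of $(q-1)$-matching $f\subset F$. I will treat the two purity ranges and the non-purity range separately.

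\emph{Purity for small $q$ and for bipartite $H$.} If $q<\lceil \ell/2\rceil$ with $\ell=2k+1$ odd, then $m\le q-1\le k-1<\lfloor \ell/2\rfloor$, so Proposition~\ref{pure-m} gives $|F|=n+q-1$ for every facet. If $H$ has no odd cycle, I rerun the proof of Proposition~\ref{pure-m} with the hypothesis $m<\lfloor \ell/2\rfloor$ replaced by ``$H$ is bipartite''. The only place that hypothesis is used is to guarantee that $H[\supp(e_1\cdots e_m)\cup\bar V]$ has no odd cycle, and this holds automatically. Hence every facet again has size $n+q-1$.

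\emph{Non-purity for $\lceil \ell/2\rceil\le q\le n-\lfloor \ell/2\rfloor$.} Let $C$ be a shortest odd cycle, of length $2k+1$. It is induced, since a chord would produce a shorter odd cycle. Because $k+1\le q\le n-k$, we have $0\le q-1-k\le n-2k-1$, so I can choose $q-1-k$ vertices $x_j\notin V(C)$. Set
\[
F=V(C)\cup\{x_j,y_j : j \text{ chosen}\}\cup\{y_i : x_i\notin V(C),\ i \text{ not chosen}\}.
\]
Then $|F|=n+q-1-k<n+q-1$. The vertices $y_i$ with $i$ not chosen are isolated in $G[F]$. The rest of $G[F]$ has $2q-1$ vertices, so $\nu(G[F])\le q-1$, and the matching on $C$ together with the chosen whisker edges attains $q-1$; thus $F\in\cf^q(G)$. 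For maximality I check each vertex outside $F$. Adding an $x_i$ with $i$ not chosen creates the edge $x_iy_i$, hence a $q$-th disjoint edge. Adding a whisker $y_v$ with $v\in V(C)$ lets me match $x_vy_v$ and use a perfect matching of the path $C\setminus v$, which has $k$ edges; together with the chosen whisker edges this gives $q$ disjoint edges. So $F$ is a facet of size smaller than $\dim\cf^q(G)+1$, and $\cf^q(G)$ is not pure.

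\emph{Purity for $q>n-\lfloor \ell/2\rfloor$.} This is the step I expect to be the main obstacle. My plan is to prove a converse structural statement: every facet $F$ with $|F|<n+q-1$ has the shape of the example above. Concretely, suppose $|F\cap Y|<m$. I would apply the Tutte--Berge formula (equivalently, the Gallai--Edmonds decomposition) to $G[F]$. Whisker vertices can only be leaves, so the deficiency should come from an odd factor-critical subgraph $D\subseteq H[F]$ on $2k'+1$ vertices none of whose whiskers lie in $F$. By the argument of Proposition~\ref{extension}, the missing whiskers are exactly those that are pairwise even-connected through such a blossom. A factor-critical graph contains an odd cycle, so $2k'+1\ge \ell$, i.e.\ $k'\ge k$. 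Counting vertices as in the construction, $D$ uses $k'$ of the $q-1$ matching edges and $2k'+1$ vertices of $H$. The remaining $q-1-k'$ matching edges need $q-1-k'$ further vertices of $H$, forcing $q-1-k'\le n-2k'-1$, i.e.\ $q\le n-k'\le n-k$. Thus for $q>n-k$ no small facet exists and $\cf^q(G)$ is pure. The delicate part is making the Gallai--Edmonds step rigorous in the language of even-connections, namely showing that a shortfall $|F\cap Y|<m$ really produces such a factor-critical odd component in $H[F]$.
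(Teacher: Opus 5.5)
Your proposal is correct. The bipartite and small-$q$ cases are handled exactly as in the paper, both through Proposition~\ref{pure-m}. Your non-pure facet is the paper's $\bar f$ made explicit; taking the $q-1-k$ extra edges to be whisker edges and whiskers on all other pairs is what makes your maximality check clean.

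The genuine difference is the range $q>n-k$, where $\ell=2k+1$. The paper never analyses small facets there. Each edge of $G$ meets $V(H)$, and $H$-edges use two of its vertices, so $2m+(q-1-m)\le n$, i.e.\ $m\le n-q+1$. For $q\ge n-k+2$ this forces $m<k$, and Proposition~\ref{pure-m} applies. For $q=n-k+1$ the only new case is $m=k$. Then the whisker edges of $f$ cover all other pairs, so $N=S=\emptyset$, and Proposition~\ref{extension} together with maximality gives $|F\cap Y|=m$. This is valid because $H[\supp(e_1\cdots e_k)]$ has $2k<\ell$ vertices and is therefore bipartite.

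The step you flag closes in a few lines of Gallai--Edmonds, with no even-connections needed. Let $K=G[F]$, $P=\{i : x_i,y_i\in F\}$, $A_F=\{x_i\in F : y_i\notin F\}$, and let $D(K)$ be the set of vertices missed by some maximum matching of $K$.

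\emph{Basic counts.} Maximality gives $\nu(K)=q-1$. It also forces every pair $\{x_i,y_i\}$ to meet $F$ (otherwise add the isolated $y_i$), so $|F|=n+|P|$. Each edge of $K$ either contains some $x_i$ with $i\in P$ or lies in $H[A_F]$, so $q-1\le |P|+\nu(H[A_F])$.

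\emph{Location of $D(K)$.} Maximality puts each $x_i\in A_F$ in $D(K)$, since a maximum matching of $G[F\cup\{y_i\}]$ must use $x_iy_i$. Conversely, $x_i\notin D(K)$ for $i\in P$: a maximum matching missing $x_i$ also misses $y_i$ and could be enlarged by $x_iy_i$.

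\emph{The odd component.} If $|F|<n+q-1$, some edge of $H[A_F]$ has both ends in $D(K)$. Its component in $K[D(K)]$ is factor-critical, hence non-bipartite. It lies in $A_F$, because whiskers are leaves whose neighbours $x_i$, $i\in P$, are not in $D(K)$. Thus $|A_F|\ge\ell$, and $q-1\le n-|A_F|+\lfloor |A_F|/2\rfloor\le n-k-1$. The same component has a near-perfect matching with at least $k$ edges, giving $q\ge k+1$. It cannot exist when $H$ is bipartite.

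So your route proves all three purity statements uniformly, with the sharp bound $q\le n-k$ and no boundary case. It also avoids Propositions~\ref{pure-m} and~\ref{extension}, whose arguments only work when the relevant supports are bipartite. The paper's route is shorter once those propositions are granted.

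Drop your sentence appealing to the argument of Proposition~\ref{extension} for blossoms, since its concatenation step fails exactly there. For example, take a triangle $x_1x_2x_3$ with a pendant $x_4$ at $x_3$, and let $f$ be matched by $x_1x_2$ and $x_3x_4$. Then both $y_1$ and $y_2$ are even-connected to $y_4$, so $\{y_4\}$ cannot be extended. Your count only needs $|A_F|\ge\ell$ in any case.
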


\begin{proof}

(1) Suppose $H$ has no odd cycle. 
Let $F$ be any facet of $\cf^q(G)$, and let $f \subset F$ be a set of its $q-1$ disjoint edges. 
Let $e_1, \ldots, e_m$ be the edges of $H$ contained in $f$, and let $Y, N, S$ be as in Setup~\ref{setup-partition}. 
By Proposition~\ref{proposition-pure},
$|F \cap N| + |F \cap S| = n - m - q + 1.$
Thus, it suffices to show that $|F \cap Y| = m$.
Let 
$\bar{V} \subseteq N \cap V(H)$
be the vertices adjacent to some vertex in $\supp(e_1 \cdots e_m)$. 
Applying arguments similar to Case $\bar V = \emptyset$ and Case $\bar V \neq \emptyset$ in Proposition~\ref{pure-m}, we obtain $|F \cap Y| = m$. 
Hence, $\cf^q(G)$ is pure.

\noindent
(2) Suppose $H$ has an odd cycle, and let $\ell$ be the length of the smallest odd cycle in $H$.
Assume that $\cf^q(G)$ is pure and, for contradiction, that
$\left\lceil \frac{\ell}{2} \right\rceil 
\le q \le 
n - \left\lfloor \frac{\ell}{2} \right\rfloor.$
Let $x_1 x_2 \cdots x_\ell x_1$ be a shortest odd cycle in $H$, and set 
$m = \lfloor \ell/2 \rfloor$. 
Choose a face $f \in \cf^q(G)$ containing exactly $q-1$ disjoint edges, where $m$ of them are 
$\{x_1,x_2\}$, $\{x_3,x_4\}, \ldots$, $\{x_{\ell-2},x_{\ell-1}\},$
and the remaining $q-1-m$ edges are chosen from $G \setminus \{x_\ell\}$. 
(This is possible since $q \le n - \lfloor \ell/2 \rfloor$.)
Let $Y, N, S$ be as in Setup~\ref{setup-partition}. 
Choose subsets $S_2 \subseteq S$ and $S_3 \subseteq N$ such that
\[
|S_2| = \dim\!\bigl(\cf^1(G[S])\bigr) + 1,
~
|S_3| = \dim\!\bigl(\cf^1(G[N])\bigr) + 1,
~
\text{and } x_\ell \in S_3.
\]
Then $f \cup S_2 \cup S_3$ contains exactly $q-1$ disjoint edges and includes $x_\ell$. 
Set $\bar f := f \cup S_2 \cup S_3$.
Every vertex $y \in Y$ is even-connected with $x_\ell$ with respect to 
$\{x_1,x_2\}\cdot \{x_3,x_4\}\cdots \{x_{\ell-2},x_{\ell-1}\}$.
By Proposition~\ref{even2}, such a vertex $y$ cannot be added while remaining in $\cf^q(G)$. 
Hence, $\bar f$ is a facet of $\cf^q(G)$. 
By Proposition~\ref{proposition-pure},
$|\bar f| = 2(q-1) + n - q + 1 - m = n + q - 1 - m.$
Since $m \ge 1$, this facet has cardinality at most $n + q - 2$. 
On the other hand, by Theorem~\ref{thm:dim}, there exist facets of size $n + q - 1$. 
Thus, $\cf^q(G)$ has facets of different dimensions, contradicting purity. 
Therefore,
$q < \left\lceil \frac{\ell}{2} \right\rceil$ 
or
$q > n - \left\lfloor \frac{\ell}{2} \right\rfloor.$

\medskip

Conversely, suppose
$q < \left\lceil \frac{\ell}{2} \right\rceil$
or 
$q > n - \left\lfloor \frac{\ell}{2} \right\rfloor.$
Let $F$ be any facet of $\cf^q(G)$, and let $f \subset F$ be a face containing $q-1$ disjoint edges of $F$. 
Let $e_1, \ldots, e_m$ be the edges of $H$ contained in $f$.

\medskip
\noindent
\textit{Case $q < \lceil \ell/2 \rceil$.}
Then $m < \lfloor \ell/2 \rfloor$, and by Proposition~\ref{pure-m},
$|F| = \dim(\cf^q(G)) + 1.$
Hence, $\cf^q(G)$ is pure.

\medskip
\noindent
\textit{Case $q > n - \lfloor \ell/2 \rfloor + 1$.}
If $m \ge \lfloor \ell/2 \rfloor$, then
$|V(H) \setminus \supp(e_1 \cdots e_m)| \le n - (\ell - 1).$
Hence, at most $n - (\ell - 1)$ disjoint edges can be chosen from the remaining graph. 
However,
\[
q - 1 - m \ge q - 1 - \lfloor \ell/2 \rfloor > n - (\ell - 1),
\]
a contradiction. 
Thus, $m < \lfloor \ell/2 \rfloor$, and Proposition~\ref{pure-m} implies that $|F| = \dim(\cf^q(G)) + 1$. 
Therefore, $\cf^q(G)$ is pure.

\medskip
\noindent
\textit{Case $q = n - \lfloor \ell/2 \rfloor + 1$.}
Then $m \le \lfloor \ell/2 \rfloor$. 
If $m < \lfloor \ell/2 \rfloor$, we are done. 
Suppose $m = \lfloor \ell/2 \rfloor$. Then
\[
|V(H) \setminus \supp(e_1 \cdots e_m)| 
= n - (\ell - 1) 
= q - 1 - m.
\]
Since $f$ contains $q-1$ disjoint edges, all whisker edges of 
$G \setminus \supp(e_1 \cdots e_m)$ must lie in $f$. 
Let $Y, N, S$ be as in Setup~\ref{setup-partition}. 
Then $N = S = \emptyset$. 
By Proposition~\ref{extension}, there exists $S_1 \subset Y$ with $|S_1| = m$ such that no two vertices of $S_1$ are even-connected with respect to $e_1 \cdots e_m$. 
Since $N = \emptyset$, and $F$ is a facet, $F$ must contain such a subset $S_1$. 
Hence,
$|F| = n + q - 1.$
\end{proof}


\section{Shellability of 
Matching-Free Complexes}\label{shellable}
In this section, we investigate shellability
properties of the $q$-matching-free complex $\cf^q(G)$ for whisker graphs and
derive applications to the depth of squarefree powers of edge ideals.

Throughout the paper, we fix the following notation and standing assumptions.

\begin{AN}\label{nota-ord}
Let $H$ be a graph with vertex set $V(H)=\{x_1,\ldots,x_n\}$, and let
$G=W(H)$ be its whisker graph.
Fix $x_1\in V(H)$.
For $0\le k\le q-1$, define $\mathcal{M}_k^{q-1}$ as the collection of
$(q-1)$-matchings of $G\setminus\{x_1\}$ containing exactly $k$ whisker edges:
\[
\mathcal{M}_k^{q-1}
=
\bigl\{M\subseteq E(G\setminus\{x_1\}) \mid |M|=q-1,\;
M \text{ is a matching with } k \text{ whisker edges}\bigr\}.
\]
Let $\alpha_k=|\mathcal{M}_k^{q-1}|$, and write
$\mathcal{M}_k^{q-1}=\{M_{k,1},M_{k,2},\ldots,M_{k,\alpha_k}\}.$
Set
$\mathcal{M}^{q-1}=\bigcup_{k=0}^{q-1}\mathcal{M}_k^{q-1}.$
We define a total order $\prec$ on $\mathcal{M}^{q-1}$ by ordering the families
$\mathcal{M}_k^{q-1}$ by increasing $k$, and fixing an arbitrary order within
each family. Thus
\[
M_{0,1}\prec\cdots\prec M_{0,\alpha_0}
\prec M_{1,1}\prec\cdots\prec M_{1,\alpha_1}
\prec\cdots\prec
M_{q-1,1}\prec\cdots\prec M_{q-1,\alpha_{q-1}}.
\]

With respect to $\prec$, list the elements of $\mathcal{M}^{q-1}$ as
\[
M_1\prec M_2\prec\cdots\prec M_{\alpha'},~ \alpha'=|\mathcal{M}^{q-1}|.
\]
\end{AN}

We now introduce a local move on matchings that will be used to compare
$(q-1)$-matchings with respect to the fixed order.

\begin{definition}\label{def:swap-set}
Assume the notation of Assumptions and Notation~\ref{nota-ord}.
Let $M$ be a $(q-1)$-matching of $G\setminus\{x_1\}$ with edges
$e_1,\ldots,e_{q-1}$.
The \emph{swap set} of $M$, denoted by $S(M)$, is the set of all vertices
$z\in V(G\setminus \{x_1\})\setminus\supp(M)$ for which there exist a vertex
$y\in\supp(M)$ and an edge $e_j=\{y,y'\}\in M$ such that
\begin{itemize}
\item[(i)] $\{y,z\}\in E(G\setminus\{x_1\})$;
\item[(ii)] $(M\setminus\{e_j\})\cup\{\{y,z\}\}$ is a $(q-1)$-matching of
$G\setminus\{x_1\}$;
\item[(iii)] $(M\setminus\{e_j\})\cup\{\{y,z\}\}\prec M$.
\end{itemize}
\end{definition}

We begin by fixing the notation and standing assumptions used throughout
this section.

\begin{setup}\label{setup:WH}
Let $H$ be a graph with vertex set $V(H)=\{x_1,\ldots,x_n\}$, and let
$G=W(H)$ be its whisker graph.
For each $i$, let $y_i$ denote the whisker vertex adjacent to $x_i$.
Set $m=\girth(H)$
\end{setup}

For the study of the shelling structure of the $q$-matching-free complex,
we fix the following notation.

\begin{notation}\label{nota-shell}
Let $G$ be as in Setup~\ref{setup:WH}, and consider the simplicial complex 
$\cf^{q}(G)$. Using the notation from Assumptions and 
Notation~\ref{nota-ord}, label the $(q-1)$-matchings of 
$G \setminus \{x_1\}$ as
\[
M_1 \prec M_2 \prec \cdots \prec M_{\alpha'}.
\]
For each $1 \le i \le \alpha'$, set
$\mu_i' := \supp(M_i).$
Note that distinct matchings may have the same support; i.e.,
it may happen that $M_i \neq M_j$ while 
$\supp(M_i) = \supp(M_j)$ for some $i \neq j$.
Let $\mu_1, \ldots, \mu_\alpha$ be the distinct elements among 
$\mu_1', \ldots, \mu_{\alpha'}'$, listed in the induced order.
Thus, $\{\mu_1, \ldots, \mu_\alpha\}$ is precisely the set of distinct supports 
of the $(q-1)$-matchings of $G \setminus \{x_1\}$.
Each $\mu_k$ is a face of $\cf^{q}(G)$. Define a decreasing filtration by
\[
\Omega_0 := \cf^{q}(G),
\qquad
\Omega_k := \Omega_{k-1} \setminus \mu_k
\quad \text{for } 1 \le k \le \alpha.
\]

For each $1 \le k \le \alpha$, fix a $(q-1)$-matching
$M_k = \{e_{k,1}, \ldots, e_{k,q-1}\}$
such that $\supp(M_k) = \mu_k$, and define
$H_k := G^{e_{k,1}\cdots e_{k,q-1}},$
$S_k := S(M_k),$
where $S(M_k)$ denotes the swap set of $M_k$ as in
Definition~\ref{def:swap-set}.
\end{notation}

The next theorem establishes the structural properties underlying the
shellability of the complex $\cf^{\,q}(G)$.

\begin{theorem}\label{thm-shell}
Assume the notation of Notation~\ref{nota-shell}. Then:
\begin{enumerate}
\item
$\Omega_{\alpha}
=
\cf^{\,q-1}\!\bigl(G\setminus\{x_1,y_1\}\bigr)
*
2^{\{x_1,y_1\}},$
where $2^{\{x_1,y_1\}}$ denotes the simplex on $\{x_1,y_1\}$.

\item
For $1\le k\le\alpha$, the face $\mu_k$ is a shedding face of
$\Omega_{k-1}$.

\item
For $1\le k\le\alpha$,
$\link_{\Omega_{k-1}}(\mu_k)
=
\cf^{1}\!\bigl(H_k\setminus S_k\bigr).$
\end{enumerate}
\end{theorem}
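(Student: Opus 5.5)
The plan is to prove the three parts in the order (1), (3), (2). Part (3) identifies the link exactly, and that description is what the shedding argument in (2) relies on.

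\emph{Part (1).} By definition, $\Omega_\alpha$ consists of the faces $F\in\cf^q(G)$ containing no $\mu_k$. Since $\{\mu_k\}$ is exactly the set of supports of $(q-1)$-matchings of $G\setminus\{x_1\}$, this says that $G[F\setminus\{x_1\}]$ has no $(q-1)$-matching. As $y_1$ is isolated in $G\setminus\{x_1\}$, this holds if and only if $F\setminus\{x_1,y_1\}\in\cf^{q-1}(G\setminus\{x_1,y_1\})$. For the inclusion "$\supseteq$", take $F'\in\cf^{q-1}(G\setminus\{x_1,y_1\})$ and check that $F'\cup\{x_1,y_1\}$ has no $q$-matching. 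A $q$-matching uses at most one edge through $x_1$. Every other edge avoids both $x_1$ and $y_1$, because $N_G(y_1)=\{x_1\}$. So at least $q-1$ of its edges would form a matching in $F'$, which is impossible. The join decomposition follows.

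\emph{Part (3).} Here $\link_{\Omega_{k-1}}(\mu_k)$ consists of the sets $T\subseteq V(G)\setminus\mu_k$ with $T\cup\mu_k\in\cf^q(G)$ and $\mu_j\not\subseteq T\cup\mu_k$ for all $j<k$. The face $\mu_k$ contains exactly $q-1$ disjoint edges, namely $M_k$, so Corollary~\ref{even-cor} shows that the first condition says precisely that $T$ is independent in $H_k=G^{e_{k,1}\cdots e_{k,q-1}}$.

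It remains to show that the second condition is equivalent to $T\cap S_k=\emptyset$. I will choose $M_k$ to be the $\prec$-first matching with support $\mu_k$.

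If $z\in T\cap S_k$, the swapped matching $M'=(M_k\setminus\{e_j\})\cup\{\{y,z\}\}$ satisfies $M'\prec M_k$. Its support $\mu_k\setminus\{y'\}\cup\{z\}$ therefore first occurs before $\mu_k$, so it equals some $\mu_j$ with $j<k$. That $\mu_j$ lies in $T\cup\mu_k$, so $T$ is not in the link.

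For the converse, suppose $T$ is independent in $H_k$ and $\mu_j\subseteq T\cup\mu_k$ for some $j<k$. Let $M_j$ be the chosen matching with $\supp(M_j)=\mu_j$. Since $T\cup\mu_k$ contains no $q$-matching, the symmetric difference $M_j\triangle M_k$ has no augmenting path, so it decomposes into alternating paths and cycles of even length. Each vertex of $T$ used by $M_j$ is an endpoint of an alternating path that starts with an $M_j$-edge $\{z,y\}$ and is followed by an $M_k$-edge $\{y,y'\}$. The plan is to show that one of these single swaps already produces a matching that precedes $M_k$, so that $z\in S_k$.

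The whisker count drives the order, so I will split into cases by how the swap changes the number of whisker edges. A swap that does not decrease this count has to be related back to the order on $M_j$. This requires choosing the order within each family $\mathcal{M}_k^{q-1}$ compatibly (for instance lexicographically), since the Notation leaves that order arbitrary. \textbf{This converse is the step I expect to be the main obstacle}; the even-connection independence of $T$ is what should exclude the longer alternating paths.

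\emph{Part (2).} Let $\tau\supseteq\mu_k$ be a face of $\Omega_{k-1}$ and $v\in\mu_k$, with $v\in e=\{v,v'\}\in M_k$. We need $w\notin\tau$ such that $(\tau\cup\{w\})\setminus\{v\}\in\Omega_{k-1}$. I will try the following choices in order.
\begin{itemize}
\item If $v=x_i\in V(H)$ and $y_i\notin\tau$, take $w=y_i$. Any new matching must use the edge $x_iy_i$, but $x_i$ has been removed, so no new $q$-matching and no new $(q-1)$-support $\mu_j$ appear.
\item If $v$ is a whisker vertex $y_i$, or $y_i\in\tau$ already, take $w=x_1$ or $w=y_1$, whichever is not in $\tau$. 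By construction these lie outside every $\mu_j$. Removing $v$ destroys the edge $e$, so $\tau\setminus\{v\}$ contains at most $q-2$ disjoint edges in $G\setminus\{x_1\}$. Adding $x_1$ or $y_1$ can then raise the matching number by at most one, giving no $q$-matching, and it creates no support $\mu_j$ because $\mu_j\subseteq V(G)\setminus\{x_1,y_1\}$.
\end{itemize}
Taken together, (1)--(3) and Lemma~\ref{lem:shedding} then reduce the shellability of $\cf^q(G)$ to that of independence complexes of $H_k\setminus S_k$ and of $\cf^{q-1}(G\setminus\{x_1,y_1\})$.
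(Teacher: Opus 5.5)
Your part (1) is correct. So is the inclusion $\link_{\Omega_{k-1}}(\mu_k)\subseteq\cf^1(H_k\setminus S_k)$ in (3), and taking $M_k$ to be the $\prec$-first matching with support $\mu_k$ is genuinely needed there; the paper omits it. The remaining steps do not go through, and in fact (2) and (3) are false as stated: (2) fails for one admissible order and (3) fails for every one. Let $H$ be the path $x_1x_2x_3x_4$ and $q=3$. Then $\mathcal{M}_0^{2}=\emptyset$ and $\mathcal{M}_1^{2}=\{A,B\}$ with $A=\{x_2y_2,x_3x_4\}$ and $B=\{x_2x_3,x_4y_4\}$, listed in either order.

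\textbf{Part (2).} Your second bullet claims that $\tau\setminus\{v\}$ has at most $q-2$ disjoint edges. This is wrong when $v=y_i$ with $x_iy_i\in M_k$: deleting $y_i$ frees $x_i$, which can be rematched along an even alternating path. Concretely, list $A$ first, so $\mu_1=\{x_2,y_2,x_3,x_4\}$ and $\Omega_0=\cf^3(G)$. The set $\tau=\{x_1,x_2,y_2,x_3,x_4,y_4\}$ is a face, because $x_1$ and $y_2$ both have $x_2$ as their only neighbour in $\tau$, so there is no perfect matching. Take $v=y_2$. The only candidates are $w=y_1$ and $w=y_3$, and they create the $3$-matchings $\{x_1y_1,x_2x_3,x_4y_4\}$ and $\{x_1x_2,x_3y_3,x_4y_4\}$ respectively. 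So $\mu_1$ is not a shedding face at all. Your other cases are fine: if $v=x_i$ and $y_i\in\tau$, then a $(q-1)$-matching in $\tau\setminus\{x_i\}$ together with $x_iy_i$ would be a $q$-matching in $\tau$. The case $v=y_i$, however, needs a restriction on the order that you never formulate. The paper's Case~2 uses the same $x_1/y_1$ exchange and has the same defect.

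\textbf{Part (3), converse inclusion.} The mechanism you propose cannot work. Independence of $T$ in $H_k$ constrains pairs of vertices of $T$; it says nothing about alternating paths from a single vertex of $T$ into $\mu_k$. In the same example take $k=2$, with either order.
\begin{itemize}
\item If $M_2=B$, the only candidates for $S_2$ are $y_2$ and $y_3$, since $y_1$ is isolated in $G\setminus\{x_1\}$. Their single swaps give the two-whisker matchings $\{x_2y_2,x_4y_4\}$ and $\{x_3y_3,x_4y_4\}$, which come later. Hence $S_2=\emptyset$ and $\{y_2\}\in\cf^1(H_2)$. But $\{y_2\}\cup\mu_2\supseteq\supp(A)=\mu_1$, so $\{y_2\}\notin\link_{\Omega_1}(\mu_2)$.
\item If $M_2=A$, the same happens with $y_4$.
\end{itemize}
Here $A\triangle B$ is the path $y_2x_2x_3x_4y_4$, and every single swap at its whisker end raises the whisker count. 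So no lexicographic or other order inside $\mathcal{M}_1^{2}$ can repair this. Salvaging the approach would require changing the definition of $S_k$ or the primary order, and then proving a linear-quotients type exchange property. For instance, you could let $S_k$ be all $z$ with $\mu_j\subseteq\mu_k\cup\{z\}$ for some $j<k$; but then $S_k$ may contain whisker vertices, which the later vertex-decomposability step relies on excluding.

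The paper's proof does not close this gap either: in (3) it passes directly from $f\cup\mu_k\in\cf^q(G)$ to $f\cup\mu_k\in\Omega_{k-1}$.
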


\begin{proof}
\noindent{(1)}
Set
$\mathcal{C}:=\cf^{\,q-1}\!\bigl(G\setminus\{x_1,y_1\}\bigr).$
Let $F\in \mathcal{C}*2^{\{x_1,y_1\}}$.
Then $F=F'\cup T$, where $F'\in\mathcal{C}$ and
$T\subseteq\{x_1,y_1\}$.
Since $\nu(G[F'])\le q-2$, adding $T$-which contributes at most the
edge $\{x_1,y_1\}$-does not create a matching of size $q$.
Hence $F\in\Omega_\alpha$.

Conversely, let $F\in\Omega_\alpha$ and set $F':=F\setminus\{x_1,y_1\}$.
Then $\nu(G[F'])\le q-2$, so $F'\in\mathcal{C}$.
Writing $F=F'\cup T$ with $T:=F\cap\{x_1,y_1\}\subseteq\{x_1,y_1\}$ yields
$F\in\mathcal{C}*2^{\{x_1,y_1\}}$.
Therefore,
$\Omega_\alpha=\mathcal{C}*2^{\{x_1,y_1\}},$
as claimed.

\noindent{(2)}
Let $f\in \operatorname{star}(\Omega_{k-1},\mu_k)$, and let $M_k$ be the $(q-1)$-matching of
$G\setminus\{x_1,y_1\}$ corresponding to $\mu_k$.
For each edge $e\in M_k$ and each vertex $u$ incident to $e$, we show that
there exists a vertex $z\notin f$ such that
$(f\setminus\{u\})\cup\{z\}\in\Omega_{k-1}.$

\smallskip
\noindent\emph{Case 1: $e\in E(H)$.}
Write $e=\{x_a,x_b\}$.
If there exists a vertex $x\in f\setminus\mu_k$ adjacent to $x_a$ in $G$,
then by Proposition~\ref{even2} the whisker vertex $y_b$ does not belong to $f$.
Set $z:=y_b$ and exchange $x_a$ or $x_b$ with $z$.
Similarly, if a vertex of $f\setminus\mu_k$ is adjacent to $x_b$, take
$z:=y_a$ and exchange $x_b$ or $x_a$.

If neither situation occurs, then no vertex of $f\setminus\mu_k$ is adjacent
to either endpoint of $e$. In particular $y_a,y_b \notin f$. Then, we take $z=y_a$ to exchange with $x_a$ and take $z=y_b$ to exchange with $x_b$. 

\smallskip
\noindent\emph{Case 2: $e$ is a whisker edge.}
Write $e=\{x_j,y_j\}$.
By the ordering fixed in Assumptions and Notation~\ref{nota-ord}, no vertex of
$f\setminus(\mu_k\setminus\{x_1\})$ is adjacent to $x_j$ in $H$.
Since $f\in\Omega_{k-1}$, it omits at least one of $x_1$ or $y_1$; let
$z$ be the missing vertex and exchange it with either $x_j$ or $y_j$.

\smallskip
In both cases, the face $(f\setminus\{u\})\cup\{z\}$ contains at most
$q-1$ pairwise disjoint edges, and hence belongs to $\Omega_{k-1}$.
Therefore, $\mu_k$ is a shedding face of $\Omega_{k-1}$.

\vskip 2mm
\noindent{(3)}
We first show that
$\cf^1(H_k\setminus S_k)\subseteq \link_{\Omega_{k-1}}(\mu_k).$
Let $f\in\cf^1(H_k\setminus S_k)$.
By construction of $H_k$, we have $f\cap\mu_k=\emptyset$.
Moreover, Corollary~\ref{even-cor} implies that $f\cup\mu_k$ contains no
$q$ pairwise disjoint edges, and hence lies in $\Omega_{k-1}$.
Thus $f\in \link_{\Omega_{k-1}}(\mu_k)$.

Conversely, let $f\in \link_{\Omega_{k-1}}(\mu_k)$.
Then $f\cap\mu_k=\emptyset$ and $f\cup\mu_k\in\Omega_{k-1}$.
In particular, $f\cap S_k=\emptyset$, so
$f\subseteq V(H_k\setminus S_k)$.
To show that $f\in\cf^1(H_k\setminus S_k)$, it suffices to verify that
$f$ contains no edge of $H_k\setminus S_k$.
Let $\{x,y\}\in E(H_k\setminus S_k)$.
If $\{x,y\}\in E(G)$, then $f\cup\mu_k$ would contain $q$ disjoint edges,
contradicting $f\cup\mu_k\in\Omega_{k-1}$.
Hence $\{x,y\}\nsubseteq f$ in this case. 
Otherwise, $x\sim_{e_{k,1}\cdots e_{k,q-1}} y$.
Thus there exists an even-connection
$x=p_0,p_1,\ldots,p_{2r+1}=y ~(r\ge1),$
with
$\{p_{2i+1},p_{2i+2}\}\in\{e_{k,1},\ldots,e_{k,q-1}\}$ for
$0\le i\le r-1$.
Let $e_1',\ldots,e_{q-1-r}'$ denote the remaining edges of
$\{e_{k,1},\ldots,e_{k,q-1}\}$ not used in this even-connection.
Then the edges
$\{p_0,p_1\},\{p_2,p_3\},\ldots,\{p_{2r},p_{2r+1}\},
e_1',\ldots,e_{q-1-r}'$
form $q$ pairwise disjoint edges of $G$, again contradicting
$f\cup\mu_k\in\Omega_{k-1}$.
Hence $\{x,y\}\nsubseteq f$.

Therefore $f$ contains no edge of $H_k\setminus S_k$, so
$f\in\cf^1(H_k\setminus S_k)$.
This proves that
$\link_{\Omega_{k-1}}(\mu_k)=\cf^1(H_k\setminus S_k).$
\end{proof}

We introduce the following construction associated to a collection of disjoint
edges.

\begin{definition}
Let $G = W(H)$ be a whisker graph and let $e_1,\ldots,e_q \in E(H)$ be pairwise
disjoint edges, where $e_i = \{x_i, x_i'\}$ for $1 \le i \le q$. Let
$\{x_i, y_i\}$ and $\{x_i', y_i'\}$ denote the whisker edges attached to
$x_i$ and $x_i'$, respectively. Set
$Y(e_1,\ldots,e_q)
:=
\{\,y_i, y_i' \mid 1 \le i \le q\,\},$
and define
\[
\mathcal{B}_G(e_1,\ldots,e_q)
:=
G^{e_1 \cdots e_q}\bigl[\,Y(e_1,\ldots,e_q)\,\bigr],
\]
to be the induced subgraph of $G^{e_1 \cdots e_q}$ on the vertex set
$Y(e_1,\ldots,e_q)$.
\end{definition}

\begin{observation}
Every edge of $\mathcal{B}_G(e_1,\ldots,e_q)$ arises from an even-connection in
$G$ with respect to $e_1\cdots e_q$.
In particular,
$\{y_i,y_i'\}\in E\bigl(\mathcal{B}_G(e_1,\ldots,e_q)\bigr)$
for all  $1\le i\le q,$
since the walk $y_i,x_i,x_i',y_i'$ has length three and its middle edge
$\{x_i,x_i'\}=e_i$ belongs to $\{e_1,\ldots,e_q\}$.
\end{observation}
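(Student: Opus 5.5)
The statement is the Observation that every edge of $\mathcal{B}_G(e_1,\ldots,e_q)$ comes from an even-connection, and in particular $\{y_i,y_i'\}$ is an edge. The plan is to argue directly from the definitions of $G^{e_1\cdots e_q}$ and of even-connection.

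\textbf{First claim.} By definition, two vertices $u,v$ of $G^{e_1\cdots e_q}$ are adjacent if and only if $\{u,v\}\in E(G)$ or $u\sim_{e_1\cdots e_q} v$. Since $\mathcal{B}_G(e_1,\ldots,e_q)$ is an induced subgraph, an edge of it is an edge of $G^{e_1\cdots e_q}$ with both endpoints in $Y(e_1,\ldots,e_q)$. So it suffices to show that no two vertices of $Y(e_1,\ldots,e_q)$ are adjacent in $G$.
\begin{itemize}
\item Each whisker vertex $y$ has $N_G(y)$ equal to a single vertex of $V(H)$.
\item Hence every neighbour of a whisker vertex lies in $V(H)$, and two whisker vertices are never adjacent in $G$.
\end{itemize}
Therefore any edge of $\mathcal{B}_G$ must come from the even-connection clause.

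\textbf{Second claim.} Take $r=1$ and $p_0=y_i$, $p_1=x_i$, $p_2=x_i'$, $p_3=y_i'$. We check the four conditions in the definition of even-connection.
\begin{enumerate}
\item The endpoints are correct: $p_0=y_i$ and $p_3=y_i'$.
\item Consecutive vertices are adjacent in $G$: $\{y_i,x_i\}$ and $\{x_i',y_i'\}$ are whisker edges, and $\{x_i,x_i'\}=e_i\in E(H)$.
\item The middle edge $\{p_1,p_2\}$ equals $e_i$.
\item Edge $e_i$ is used once. Because $e_1,\ldots,e_q$ are pairwise disjoint, hence distinct, this does not exceed its multiplicity, which is $1$.
\end{enumerate}

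Two further checks are needed.
\begin{itemize}
\item \emph{Distinctness of endpoints.} Since $x_i\ne x_i'$, the vertices $y_i$ and $y_i'$ are distinct.
\item \emph{Membership in $\mathcal{B}_G$.} Both $y_i$ and $y_i'$ lie in $V(G^{e_1\cdots e_q})$, because whisker vertices are not in $\supp(e_1\cdots e_q)\subseteq V(H)$. Both also lie in $Y(e_1,\ldots,e_q)$ by definition.
\end{itemize}
Hence $y_i\sim_{e_1\cdots e_q} y_i'$, and $\{y_i,y_i'\}\in E(\mathcal{B}_G(e_1,\ldots,e_q))$.

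There is no real obstacle here. The only point requiring care is that the first claim relies on whisker vertices being pairwise non-adjacent in $G$.
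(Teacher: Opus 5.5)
Your proposal is correct and matches the paper's reasoning. The paper justifies the second claim with exactly the length-three even-connection $y_i,x_i,x_i',y_i'$ through $e_i$, and states the first claim without justification. Your remark that whisker vertices are pairwise non-adjacent in $G$, so that no edge of $\mathcal{B}_G(e_1,\ldots,e_q)$ can come from $E(G)$, supplies that omitted justification.
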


The following lemma establishes a key structural property of the graph
$\mathcal{B}_G(e_1,\ldots,e_q)$.

\begin{lemma}\label{lm:Bvd}
Let $G$ be as in Setup~\ref{setup:WH}, and let $e_1,\ldots,e_q$ be pairwise
disjoint edges of $H$.
Assume that either $1\le q<\frac{m}{2}$ when $m<\infty$, or $q\le\Mat(G)$ when
$m=\infty$.
Then the complex
$\cf^1\!\bigl(\mathcal{B}_G(e_1,\ldots,e_q)\bigr)$
is vertex decomposable.
\end{lemma}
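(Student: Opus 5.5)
Our plan is to determine the graph $\mathcal{B}:=\mathcal{B}_G(e_1,\ldots,e_q)$ explicitly under the girth hypothesis. We will show that it is a disjoint union of cliques, or at least a graph with a simplicial vertex in every induced subgraph (a chordal graph). Vertex decomposability then follows from the known result of Woodroofe that independence complexes of chordal graphs are vertex decomposable, or, more directly, from the fact that the independence complex of a disjoint union of complete graphs is a join of discrete point sets.

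\textbf{Step 1: translate edges of $\mathcal{B}$ into paths of $H$.} Write $e_i=\{x_i,x_i'\}$ and $U:=\supp(e_1\cdots e_q)\subseteq V(H)$. A whisker vertex $y$ is adjacent in $G$ only to its base vertex. Hence an even-connection between $y_a$ and $y_b$ (with $a,b\in U$) must start $y_a,x_a,\ldots$ and end $\ldots,x_b,y_b$, with $x_a$ and $x_b$ the first and last vertices of matched edges. Stripping the whisker ends, an edge $\{y_a,y_b\}$ of $\mathcal{B}$ corresponds exactly to a walk in $H[U]$ from $x_a$ to $x_b$ that alternates matched edge, $H$-edge, matched edge, and so on, beginning and ending with a matched edge, using each $e_i$ at most once. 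Equivalently, it is an $M$-alternating path in $H[U]$ with both end-edges in $M=\{e_1,\ldots,e_q\}$.

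\textbf{Step 2: use the girth.} If $m=\infty$, then $H[U]$ is a forest. If $q<m/2$, then $|U|=2q<m$, so again $H[U]$ contains no cycle. In both cases $H[U]$ is a forest with perfect matching $M$. Contract each matched edge $e_i$ to a single node $v_i$. Since $H[U]$ is a forest, the quotient $T$ is again a forest: a cycle in the quotient would lift to a cycle in $H[U]$, because within each contracted edge we may pass through it. The alternating paths of Step 1 then correspond to paths in $T$. Concretely, $y_a\sim y_b$ exactly when the unique path in $H[U]$ from $x_a$ to $x_b$ is $M$-alternating and begins and ends with matched edges.

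\textbf{Step 3: find a shedding vertex and recurse, the step we expect to be the main obstacle.} The graph $\mathcal{B}$ need not be a union of cliques; for a path of three matched edges it is not obvious. So instead we run induction on $q$ using a leaf of the forest $T$. Take a leaf node $v_i$ of $T$, so $e_i=\{x,x'\}$ meets the rest of $H[U]$ only through, say, $x'$. Then in $H[U]$ the vertex $x$ has no neighbour other than $x'$. Every alternating path passing through $e_i$ either ends at $x$ or does not use $e_i$ at all. Consequently $N_{\mathcal{B}}[y]\subseteq N_{\mathcal{B}}[y']$, where $y,y'$ are the whiskers of $x,x'$, and $y$ is adjacent only to $y'$ and to vertices reachable by alternating paths leaving $x'$. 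We will check that $y'$ is a shedding vertex, using the standard criterion that $N[y]\subseteq N[y']$ makes $y'$ shedding. Then:
\begin{itemize}
\item $\mathcal{B}\setminus y'$ has $y$ simplicial;
\item $\mathcal{B}\setminus N[y']$ is identified, via Lemma~\ref{lm:leaf-remove} and Proposition~\ref{prop:delete-vertex-even}, with an induced subgraph of $\mathcal{B}_{G'}$ for fewer matched edges.
\end{itemize}
Deleting $y$ (or its neighbourhood) is handled the same way. The subtle point is justifying that deleting whisker vertices from $\mathcal{B}$ commutes with passing to the smaller matching. For this we will strengthen the induction hypothesis to: every induced subgraph of $\mathcal{B}_G(e_1,\ldots,e_q)$ has vertex decomposable independence complex. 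In that form the hypothesis follows from the leaf structure, since induced subgraphs of these graphs retain a vertex $y'$ with a dominated neighbour or become edgeless.
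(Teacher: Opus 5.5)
Your Steps 1 and 2 are correct, but Step 3 fails as written. First, the containment is reversed. Suppose the leaf edge $e_i=\{x,x'\}$ of $T$ is attached through $x'$. Any alternating path starting at $x'$ must begin $x',x$ and then stop, because $x$ has no other neighbour in $H[U]$. So $N_{\mathcal{B}}(y')=\{y\}$ and $N_{\mathcal{B}}[y']\subseteq N_{\mathcal{B}}[y]$: the shedding vertex is $y$, not $y'$, and $y'$ is in general not shedding.

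Concretely, let $H=P_6$ with vertices $a_1,a_2,b_1,b_2,c_1,c_2$ in order, and let $M=\{a_1a_2,b_1b_2,c_1c_2\}$ (so $m=\infty$, $q=3$). Write $A_j,B_j,C_j$ for the whiskers at $a_j,b_j,c_j$. Then $E(\mathcal{B})=\{A_1A_2,\,B_1B_2,\,C_1C_2,\,A_1B_2,\,B_1C_2,\,A_1C_2\}$. For the leaf $a_1a_2$ you would shed $A_2$. But $\{B_2,C_2\}$ is a maximal independent set of $\mathcal{B}\setminus A_2$ lying in the link of $A_2$, and $A_1$ is not simplicial in $\mathcal{B}\setminus A_2$.

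The more serious problem is that your strengthened induction hypothesis is false. In the same example, $A_1,B_2,B_1,C_2$ induce a $4$-cycle in $\mathcal{B}$: the paths $a_1a_2b_1$ and $b_2c_1c_2$ do not begin and end with matched edges, so $A_1B_1$ and $B_2C_2$ are non-edges. The independence complex of this $4$-cycle is two disjoint segments, which is not vertex decomposable. So $\mathcal{B}$ need not be chordal, and induced subgraphs of $\mathcal{B}$ need not have a vertex with a dominated neighbour.

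Even after swapping $y$ and $y'$, only the deletion side is under control. The deletion $\mathcal{B}\setminus y$ is $\mathcal{B}_G(M\setminus\{e_i\})$ plus the isolated vertex $y'$, which is fine. The link side $\mathcal{B}\setminus N_{\mathcal{B}}[y]$, however, is only known to be an induced subgraph of a smaller $\mathcal{B}$, and that is not enough.

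The missing idea is a closure statement. In the forest $H[U]$, two even-connections sharing a matched edge can be spliced at that edge. Hence every matched edge used by an even-connection from $y$ is unusable by any edge surviving in $\mathcal{B}\setminus N_{\mathcal{B}}[y]$. It follows that $\mathcal{B}\setminus N_{\mathcal{B}}[y]$ equals $\mathcal{B}_G(M')$ together with isolated vertices, where $M'\subsetneq M$ consists of the matched edges not reached from $y$. With this, your leaf induction on $q$ closes.

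This splicing argument is exactly the paper's case $A\neq\emptyset$. The paper then applies Woodroofe's criterion (a simplicial vertex in every $\mathcal{B}\setminus N_{\mathcal{B}}[A]$), obtaining a degree-one whisker from an even-connection that uses the maximal number of matched edges.
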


\begin{proof}
Set $\mathcal{B}:=\mathcal{B}_G(e_1,\ldots,e_q)$.
By~\cite[Corollary~5.5]{Russ11}, it suffices to show that for every independent
set $A\subseteq V(\mathcal{B})$, the graph
$\mathcal{B}\setminus N_{\mathcal{B}}[A]$ contains a simplicial vertex.

\medskip
\noindent\textit{Case $A=\emptyset$.}
Choose an edge $\{z,z'\}\in E(\mathcal{B})$ admitting an even-connection
$z=p_0,p_1,\ldots,p_{2k+1}=z' \text{ for some } k\ge1,$
that uses a maximal number of edges from $\{e_1,\ldots,e_q\}$.
Thus
$\{p_{2i+1},p_{2i+2}\}=e_{\sigma(i)}\in\{e_1,\ldots,e_q\}$
for all  $0\le i\le k-1$,
with $p_1,\ldots,p_{2k}\in V(H)$ and
$p_0,p_{2k+1}\in V(\mathcal{B})$.
Let $p_2'$ denote the whisker vertex adjacent to $p_2$.

\smallskip
\noindent\emph{Claim.} The vertex $p_2'$ is simplicial in $\mathcal{B}$.

\smallskip
The even-connection $p_2',p_2,p_1,p_0$ yields
$\{p_2',p_0\}\in E(\mathcal{B})$, so $\deg_{\mathcal{B}}(p_2')\ge1$.
Assume $\deg_{\mathcal{B}}(p_2')>1$.
Then there exists a vertex $\alpha\in V(\mathcal{B})\setminus\{p_0\}$ and an
even-connection
$\alpha=q_0,q_1,q_2,\ldots,q_{2r+1}=p_2'$
with respect to $e_1\cdots e_q$.
Since $p_2'$ is whisker vertex corresponding to only $p_2$, it follows that
$q_{2r-1}=p_1$ and $q_{2r}=p_2$.
Because $q<\tfrac{m}{2}$, the set $\supp(e_1\cdots e_q)$ does not induce a cycle
in $G$.
Consequently, concatenating this path with the tail of the original
even-connection produces an even-connection
$\alpha=q_0,q_1,\ldots,q_{2r}=p_2,p_3,\ldots,p_{2k+1}=z'$
using strictly more edges from $\{e_1,\ldots,e_q\}$ than the original
even-connection between $z$ and $z'$, contradicting maximality.
Therefore $\deg_{\mathcal{B}}(p_2')=1$, and $p_2'$ is simplicial. Hence the claim.

\noindent\textit{Case $A\neq\emptyset$.}
Write $A=\{\alpha_1,\ldots,\alpha_t\}$.
For each $\alpha_i$, define
\[
E_i
:=
\bigl\{e\in\{e_1,\ldots,e_q\}\mid
\alpha_i \text{ is even-connected to some vertex using } e
\bigr\},
\]
and set $E:=\bigcup_{i=1}^t E_i$.

\smallskip
\noindent\emph{Subcase 1: $\{e_1,\ldots,e_q\}\setminus E=\emptyset$.}
Suppose $\{\beta,\gamma\}\in E(\mathcal{B}\setminus N_{\mathcal{B}}[A])$.
Then there exists an even-connection
$\beta=p_0,p_1,\ldots,p_{2r+1}=\gamma$
with $\{p_{2j+1},p_{2j+2}\}=e_{\sigma(j)}$.
Since every $e_{\sigma(j)}\in E$, some $\alpha_i$ is even-connected using
$e_{\sigma(j)}$.
Concatenating the corresponding paths yields an even-connection from
$\alpha_i$ to $\beta$ or $\gamma$, contradicting
$\{\beta,\gamma\}\in E(\mathcal{B}\setminus N_{\mathcal{B}}[A])$.
Hence
$E(\mathcal{B}\setminus N_{\mathcal{B}}[A])=\emptyset.$
Therefore $\mathcal{B}\setminus N_{\mathcal{B}}[A]$ is either empty or consists
only of isolated vertices; in either case, it contains a simplicial vertex.

\smallskip
\noindent\emph{Subcase 2: $\{e_1,\ldots,e_q\}\setminus E\neq\emptyset$.}
Write
$\{e_1,\ldots,e_q\}\setminus E=\{e_{i_1},\ldots,e_{i_\ell}\},
\text{ for some } \ell\ge1.$

\smallskip
\noindent\emph{Claim.}
$E(\mathcal{B}\setminus N_{\mathcal{B}}[A])
=
E\bigl(\mathcal{B}_G(e_{i_1},\ldots,e_{i_\ell})\bigr).$

\smallskip
Let $\{\beta,\gamma\}\in E(\mathcal{B}\setminus N_{\mathcal{B}}[A])$.
Any even-connection between $\beta$ and $\gamma$ uses edges
$e_{\sigma(0)},\ldots,e_{\sigma(r)}$.
If some $e_{\sigma(j)}\in E$, then some $\alpha_i$ is even-connected using that
edge, and concatenation yields an even-connection from $\alpha_i$ to $\beta$
or $\gamma$, a contradiction.
Thus all $e_{\sigma(j)}\in\{e_{i_1},\ldots,e_{i_\ell}\}$, so
$\{\beta,\gamma\}\in E(\mathcal{B}_G(e_{i_1},\ldots,e_{i_\ell}))$.

Conversely, if
$\{\beta,\gamma\}\in E(\mathcal{B}_G(e_{i_1},\ldots,e_{i_\ell}))$,
then the corresponding even-connection uses only edges from
$\{e_{i_1},\ldots,e_{i_\ell}\}=\{e_1,\ldots,e_q\}\setminus E$.
Hence no $\alpha_i$ is even-connected to $\beta$ or $\gamma$, and
$\{\beta,\gamma\}\in E(\mathcal{B}\setminus N_{\mathcal{B}}[A])$.
This proves the claim.

Therefore,
$\mathcal{B}\setminus N_{\mathcal{B}}[A]$ coincides with
$\mathcal{B}_G(e_{i_1},\ldots,e_{i_\ell})$, possibly together with isolated
vertices.
Since $\ell\le q$ and $q<\tfrac{m}{2}$, we have $\ell<\tfrac{m}{2}$.
By the argument in the case $A=\emptyset$,
$\mathcal{B}_G(e_{i_1},\ldots,e_{i_\ell})$ contains a simplicial vertex.
\end{proof}

We are now ready to state the main vertex decomposability result for graphs
obtained via even connections.

\begin{theorem}\label{vd-even}
Let $G$ be a graph as in Setup~\ref{setup:WH}.
For any collection of pairwise disjoint edges
$e_1,\ldots,e_q\in E(G)$ satisfying
\[
1\le q
\begin{cases}
< m/2, & \text{ if } m<\infty,\\
\le \Mat(G), & \text{ if } m=\infty.
\end{cases}
\]
the complex $\cf^1\!\bigl(G^{e_1\cdots e_q}\bigr)$
is vertex decomposable.
\end{theorem}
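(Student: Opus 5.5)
The plan is to reduce the graph $G^{e_1\cdots e_q}$ to a join of pieces whose independence complexes are already known to be vertex decomposable. The statement then follows because vertex decomposability is preserved under joins and under adding isolated vertices or cones.

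\emph{Step 1: remove the whisker edges.} Reorder so that $e_1,\dots,e_s\in E(H)$ and $e_{s+1},\dots,e_q$ are whisker edges $\{x_j,y_j\}$. Since $N_G(y_j)=\{x_j\}$, repeated use of Lemma~\ref{lm:leaf-remove} gives
\[
G^{e_1\cdots e_q}=G'^{\,e_1\cdots e_s},\qquad G'=G\setminus\{x_j,y_j\mid s<j\le q\}.
\]
The graph $G'$ is again a whisker graph $W(H')$ with $H'=H\setminus\{x_j\mid s<j\le q\}$. We have $\girth(H')\ge m$, and $s\le q$, so the hypothesis persists. We may therefore assume all $e_i\in E(H)$. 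The case $s=0$ is immediate, since $\cf^1(W(H'))$ is vertex decomposable by \cite{DochEng09}.

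\emph{Step 2: describe the structure of $K:=G^{e_1\cdots e_q}$.} Its vertex set is $Y\cup U$, where $Y=Y(e_1,\dots,e_q)$ and $U$ consists of the remaining pairs $\{x_i,y_i\}$ with $x_i\notin\supp(e_1\cdots e_q)$. A vertex $y_i\in U$ is a leaf of $G$ not lying on any $e_j$, so it cannot be an endpoint of an even-connection; hence $y_i$ keeps only its whisker neighbour $x_i$ in $K$. Every whisker edge $\{x_i,y_i\}$ with $x_i\in U$ therefore survives in $K$ and contains the leaf $y_i$. The graph $K[Y]=\mathcal{B}_G(e_1,\dots,e_q)$ has $\cf^1$ vertex decomposable by Lemma~\ref{lm:Bvd}. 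The remaining edges of $K$ are edges of $H$ among the $x_i\in U$, and mixed edges from $x_i\in U$ to $y\in Y$ arising from even-connections.

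\emph{Step 3: shed the mixed structure.} I will induct on $|U\cap V(H)|$. Pick $x_i\in U$; it lies in the edge $\{x_i,y_i\}$ of $K$ with $N_K(y_i)=\{x_i\}$. The standard leaf argument (cf.\ \cite{Russ11}, or the proof for whisker graphs in \cite{DochEng09}) shows that $x_i$ is a shedding vertex of $\cf^1(K)$, because any independent set avoiding $x_i$ extends by $y_i$. The deletion is $\cf^1(K\setminus x_i)$, in which $y_i$ is isolated. This is a cone over $\cf^1(K\setminus\{x_i,y_i\})$. By Proposition~\ref{prop:delete-vertex-even}, $K\setminus\{x_i,y_i\}=(G\setminus\{x_i,y_i\})^{e_1\cdots e_q}$, which is again of the same type over $W(H\setminus x_i)$ with girth at least $m$, so it is handled by induction. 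The link is $\cf^1(K\setminus N_K[x_i])$. Here I need $K\setminus N_K[x_i]$ to again be $(G^*)^{e'}$ for a smaller whisker-type graph, or to reduce to a graph of the form in Lemma~\ref{lm:Bvd} together with whiskers.

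\emph{Main obstacle.} The hard part is controlling $N_K(x_i)\cap Y$. Removing these vertices from $Y$ must leave a subgraph of $\mathcal{B}$ whose independence complex is still vertex decomposable. I would handle this with the same hereditary argument as in Lemma~\ref{lm:Bvd}. The girth assumption $q<m/2$ ensures that $H[\supp(e_1\cdots e_q)\cup\{x_i\}]$ is a forest. Consequently, the vertices of $Y$ even-connected to $x_i$ are exactly $N_{\mathcal B}[A]$ for a suitable independent set $A$, so the relevant remaining graph is $\mathcal B\setminus N_{\mathcal B}[A]$. That graph possesses simplicial vertices at every stage by the proof of Lemma~\ref{lm:Bvd}, and \cite[Corollary~5.5]{Russ11} then concludes. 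When $m=\infty$ the forest property holds automatically, which gives the second case.
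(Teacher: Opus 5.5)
Your Step~1 and the shedding/deletion half of Step~3 are sound. The vertex $x_i$ is a neighbour of the leaf $y_i$, hence a shedding vertex. Two applications of Proposition~\ref{prop:delete-vertex-even} give $K\setminus\{x_i,y_i\}=(G\setminus\{x_i,y_i\})^{e_1\cdots e_q}$, which your induction covers.

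The gap is the link. By the same proposition, $K\setminus N_K[x_i]=(G\setminus N_K[x_i])^{e_1\cdots e_q}$, but $G\setminus N_K[x_i]$ is not a whisker graph. Support vertices whose whiskers are even-connected to $x_i$ lose their whiskers. Vertices $x_j\in U$ adjacent or even-connected to $x_i$ are deleted, while $y_j$ survives as an isolated vertex. (Your Step~2 also omits these $U$--$U$ edges coming from even-connections $x_i,a,b,x_j$ with $\{a,b\}=e_k$.) So neither your induction hypothesis nor Lemma~\ref{lm:Bvd} applies. In particular, the link still contains all surviving pairs $\{x_j,y_j\}$ with their mixed edges into $Y$, so it is not $\mathcal{B}\setminus N_{\mathcal B}[A]$ plus isolated vertices.

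The two facts you propose to use are also false as stated.
\begin{itemize}
\item One has $N_K(x_i)\cap Y=N_{\mathcal B}(A_0)$, an \emph{open} neighbourhood, where $A_0$ is the set of whiskers of $N_H(x_i)\cap\supp(e_1\cdots e_q)$. This is generally not $N_{\mathcal B}[A]$ for an independent $A$. Take $H$ to be the path $a_1\cdots a_6$ with a pendant vertex $x$ at $a_3$, and $e_1=a_1a_2$, $e_2=a_3a_4$, $e_3=a_5a_6$. Then $N_K(x)\cap Y=\{y_{a_4},y_{a_6}\}$, yet $y_{a_1}$ is adjacent to both in $\mathcal B$.
\item $H[\supp(e_1\cdots e_q)\cup\{x_i\}]$ need not be a forest. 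For odd $m$ and $q=(m-1)/2$, these $m$ vertices can span a shortest cycle, e.g.\ $H=C_5$, $e_1=x_2x_3$, $e_2=x_4x_5$, $x_i=x_1$.
\end{itemize}

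Invoking \cite[Corollary~5.5]{Russ11} on the link $L=K\setminus N_K[x_i]$ does not rescue the step. It means finding a simplicial vertex in $L\setminus N_L[A']=K\setminus N_K[A'\cup\{x_i\}]$ for every independent $A'$, which is just the global verification for $K$. The induction on $|U\cap V(H)|$ then buys nothing.

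That global verification is the paper's route: it applies the criterion to $K$ directly. If some $x_j\in U$ survives in $K\setminus N_K[A]$, its leaf $y_j$ is simplicial. Otherwise only whisker vertices remain. The argument of Case $A\neq\emptyset$ in Lemma~\ref{lm:Bvd} then identifies the surviving graph, up to isolated vertices, with $\mathcal B_G(e_{i_1},\dots,e_{i_\ell})$, where the $e_{i_j}$ are the matching edges not used by even-connections from $A$. Your ``main obstacle'' is exactly the case where $A$ contains vertices of $U$. It must be handled by that sub-matching argument, not by the closed-neighbourhood identity you proposed.
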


\begin{proof}
Set $G'=G^{e_1\cdots e_q}$. We proceed by induction on $q$.
First assume $q=1$, and write $e_1=\{u,v\}$.
We show that for every independent set $A\subseteq V(G')$, the graph
$G'\setminus N_{G'}[A]$ contains a simplicial vertex.

\smallskip
\noindent
\emph{Case~1: $e_1$ is a whisker edge.}
Suppose $e_1=\{x_r,y_r\}$.
Then $G'=G\setminus\{x_r,y_r\}$.
Let $A$ be an independent set of $G'$.
If there exists $x_j\notin N_{G'}[A]$, then the whisker vertex $y_j$ satisfies
$N_{G'\setminus N_{G'}[A]}(y_j)=\{x_j\}$, and hence $y_j$ is simplicial.
Otherwise, every $x_j$ lies in $N_{G'}[A]$, so
$G'\setminus N_{G'}[A]$ is edgeless.
In either case, a simplicial vertex exists.

\smallskip
\noindent
\emph{Case~2: $e_1$ is an edge of $H$.}
Suppose $e_1=\{x_\alpha,x_\beta\}$.
Let $A$ be an independent set of $G'$.
If there exists $x_j\notin N_{G'}[A]$, then the corresponding whisker vertex
$y_j$ satisfies
$N_{G'\setminus N_{G'}[A]}(y_j)=\{x_j\}$, and is simplicial.
Otherwise, $x_j\in N_{G'}[A]$ for all $j$.
In this case, the only possible edge of $G'$ is $\{y_\alpha,y_\beta\}$,
arising from the even-connection through $e_1$.
Thus $G'\setminus N_{G'}[A]$ is either edgeless or a single edge, and hence
contains a simplicial vertex.

Therefore $\cf^1\!\bigl(G^{e_1}\bigr)$ is vertex decomposable by
\cite[Corollary~5.5]{Russ11}.

Assume $q\ge2$.
If some $e_i=\{x_r,y_r\}$ is a whisker edge of $G$, then by
Lemma~\ref{lm:leaf-remove} we have
$G^{e_1\cdots e_q}
=
\bigl(G\setminus\{x_r,y_r\}\bigr)^{e_1\cdots e_{i-1}e_{i+1}\cdots e_q}.$
Since $G\setminus\{x_r,y_r\}$ is again a whisker graph, the result follows by
induction.
Hence we may assume that none of the $e_i$ is a whisker edge.

Set $G'=G^{e_1\cdots e_q}$.
We show that for every independent set $A\subseteq V(G')$, the graph
$G'\setminus N_{G'}[A]$ contains a simplicial vertex.
Write $A=A_1\sqcup A_2$, where
$A_1\subseteq N_G(e_1\cup\cdots\cup e_q)$ and
$A_2\cap N_G(e_1\cup\cdots\cup e_q)=\emptyset$.
If there exists $x_j\notin N_{G'}[A]$, then
$N_{G'\setminus N_{G'}[A]}(y_j)=\{x_j\}$, so $y_j$ is simplicial.
Thus we may assume that $x_j\in N_{G'}[A]$ for all $j$, which implies
$V\!\bigl(G'\setminus N_{G'}[A]\bigr)\subseteq\{y_1,\ldots,y_n\}$.

If $A_1=\emptyset$, then
$E\!\bigl(G'\setminus N_{G'}[A]\bigr)
=
E\!\bigl(\mathcal{B}_G(e_1,\ldots,e_q)\bigr)$,
since $\{y_i,y_j\}$ is an edge of $G'$ precisely when
$y_i$ and $y_j$ are even-connected with respect to $e_1\cdots e_q$.
Hence $G'\setminus N_{G'}[A]$ coincides with
$\mathcal{B}_G(e_1,\ldots,e_q)$, up to isolated vertices, and by
Lemma~\ref{lm:Bvd} yields a simplicial vertex.

If $A_1\neq\emptyset$, the same argument as in
Lemma~\ref{lm:Bvd} (Case $A\neq\emptyset$) shows that
$E\!\bigl(G'\setminus N_{G'}[A]\bigr)
=
E\!\bigl(\mathcal{B}_G(e_{i_1},\ldots,e_{i_\ell})\bigr)$
for some subset
$\{e_{i_1},\ldots,e_{i_\ell}\}\subseteq\{e_1,\ldots,e_q\}$.
Thus $G'\setminus N_{G'}[A]$ agrees with
$\mathcal{B}_G(e_{i_1},\ldots,e_{i_\ell})$ up to isolated vertices, and again
Lemma~\ref{lm:Bvd} guarantees the existence of a simplicial vertex.

Therefore, by \cite[Corollary~5.5]{Russ11}, the complex
$\cf^1\!\bigl(G^{e_1\cdots e_q}\bigr)$ is vertex decomposable.
\end{proof}

The following theorem establishes vertex decomposability of the links
appearing in the deletion sequence.

\begin{theorem}\label{link-vd}
Assume the notation of Notation~\ref{nota-shell} with
\[
1\le q\le
\begin{cases}
\lceil m/2\rceil, & \text{ if } m<\infty,\\
\Mat(G), & \text{ if }m=\infty.
\end{cases}
\]
Then, for each $1\le k\le \alpha$, the complex
$\link_{\Omega_{k-1}}(\mu_k)$ is vertex decomposable.
\end{theorem}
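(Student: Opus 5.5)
By Theorem~\ref{thm-shell}(3), $\link_{\Omega_{k-1}}(\mu_k)=\cf^1(H_k\setminus S_k)$, where $H_k=G^{e_{k,1}\cdots e_{k,q-1}}$ is built from a $(q-1)$-matching $M_k$ of $G\setminus\{x_1\}$. So the plan is to show that the independence complex of $H_k\setminus S_k$ is vertex decomposable. As in Lemma~\ref{lm:Bvd} and Theorem~\ref{vd-even}, I would use \cite[Corollary~5.5]{Russ11}: it suffices that for every independent set $A$ of $H_k\setminus S_k$, the graph $(H_k\setminus S_k)\setminus N[A]$ has a simplicial vertex.

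The key observation is that the number of edges in the even-connections is $q-1$. The hypothesis $q\le\lceil m/2\rceil$ gives $q-1\le\lceil m/2\rceil-1<m/2$. Theorem~\ref{vd-even} therefore applies to $e_{k,1},\dots,e_{k,q-1}$ when $q\ge2$. When $m=\infty$ it applies for all $q-1\le\Mat(G)$. When $q=1$ we have $H_k=G$ and $S_k=\emptyset$, and the claim is the classical result of \cite{DochEng09}.

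It remains to pass from $H_k$ to $H_k\setminus S_k$. Removing vertices is the same as taking $\cf^1$ of an induced subgraph, which need not preserve vertex decomposability in general. So I would not try to deduce it abstractly. Instead I would rerun the simplicial-vertex argument of Theorem~\ref{vd-even} on the induced subgraph $H_k\setminus S_k$. By Proposition~\ref{prop:delete-vertex-even}, $H_k\setminus S_k=(G\setminus S_k)^{e_{k,1}\cdots e_{k,q-1}}$. The argument has two steps.
\begin{enumerate}
\item If some $x_j$ survives in the graph remaining after deleting $N[A]$ and its whisker $y_j$ also survives, then $y_j$ is simplicial, having the single neighbor $x_j$.
\item Otherwise the surviving edges come only from even-connections among whisker vertices in $Y(e_{k,1},\dots,e_{k,q-1})$. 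The analysis of Lemma~\ref{lm:Bvd}, using maximal even-connections and the absence of cycles in $\supp$ because $q-1<m/2$, then produces a simplicial vertex, since deleting vertices of $S_k$ only removes vertices and edges of $\mathcal{B}_G$.
\end{enumerate}

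The main obstacle is to control what $S_k$ deletes. A whisker vertex $y_j$ can lie in $S_k$ while $x_j$ does not; this happens exactly when the swap $\{x_j,y_j\}$ replaces an $H$-edge of $M_k$ and gives an earlier matching in $\prec$, because that matching has more whisker edges. In that case $x_j$ loses its pendant neighbor, and step~(1) fails for $x_j$.

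I would handle this by describing $S_k$ explicitly using the order of Assumptions and Notation~\ref{nota-ord}. A vertex $z\in S_k$ is either a whisker vertex $y_a$ of an endpoint of an $H$-edge in $M_k$, or an $x$-vertex adjacent in $H$ to an endpoint of a whisker edge of $M_k$. In the first case $x_a\in\mu_k$, so $x_a$ is not a vertex of $H_k$ anyway, and deleting $y_a$ only shrinks $\mathcal{B}_G$. In the second case the removed vertex is an $x$-vertex, and its whisker simply becomes isolated, which is harmless. In both cases step~(1) survives for every $x_j$ outside $\mu_k$. The residual graph in step~(2) is an induced subgraph of some $\mathcal{B}_G(e_{i_1},\dots,e_{i_\ell})$, and the maximal-even-connection argument of Lemma~\ref{lm:Bvd} still applies to it, choosing the maximal connection among the surviving vertices.
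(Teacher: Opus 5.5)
Your overall reduction matches the paper: the link equals $\cf^1(H_k\setminus S_k)$, Proposition~\ref{prop:delete-vertex-even} applies, and $q-1\le\lceil m/2\rceil-1<m/2$. The gap is in how you control $S_k$: that step rests on a misreading of the order, and the fact you miss is exactly the paper's key step.

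\textbf{The order is backwards.} In Assumptions and Notation~\ref{nota-ord}, the families $\mathcal{M}_j^{q-1}$ are listed by \emph{increasing} number $j$ of whisker edges. A whisker vertex $y_a\notin\mu_k$ can only enter the swap set through $y=x_a\in\mu_k$ and the edge $e\in M_k$ at $x_a$. That edge must lie in $E(H)$, since $y_a\notin\supp(M_k)$. Replacing $e$ by $\{x_a,y_a\}$ gives a matching with one \emph{more} whisker edge, which therefore comes \emph{after} $M_k$. So condition (iii) of Definition~\ref{def:swap-set} fails, and $S_k\subseteq V(H)$ always.

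\textbf{With this fact the proof is immediate.} First, $G\setminus S_k$ is the whisker graph $W(H\setminus S_k)$ together with the isolated vertices $y_c$ for $x_c\in S_k$. Second, $\girth(H\setminus S_k)\ge m$. Third, $\supp(M_k)\cap S_k=\emptyset$. Hence $H_k\setminus S_k$ is $W(H\setminus S_k)^{e_{k,1}\cdots e_{k,q-1}}$ plus isolated vertices, and Theorem~\ref{vd-even} applies verbatim, since isolated vertices are cone points. You do not need to re-run the simplicial-vertex argument.

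\textbf{Your patch for the whisker case would not work.} That case is in fact vacuous, but the fix you propose for it is false. The maximal-even-connection argument of Lemma~\ref{lm:Bvd} does not extend to arbitrary induced subgraphs of $\mathcal{B}_G$, because the simplicial vertex it produces, $p_2'$, may be one of the deleted vertices. For example, let $v_1v_2\cdots v_6$ be an induced path in $H$ with whisker vertices $w_i$, and let $M=\{v_1v_2,v_3v_4,v_5v_6\}$. Then $\mathcal{B}_G$ has edge set $\{w_1w_2,w_1w_4,w_1w_6,w_3w_4,w_3w_6,w_5w_6\}$. Deleting $w_2$ and $w_5$ leaves an induced $4$-cycle, whose independence complex (two disjoint edges) is not even Cohen-Macaulay, let alone vertex decomposable. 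So the theorem survives only because whisker vertices never lie in $S_k$, which is the opposite of what your proposal asserts.

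A minor point: your description of the $x$-vertices in $S_k$ is also incomplete. Swaps within a single family $\mathcal{M}_j^{q-1}$, whose internal order is arbitrary, can add $H$-neighbours of endpoints of $H$-edges of $M_k$. Since you treat all deleted $x$-vertices uniformly, this omission is harmless.
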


\begin{proof}
By Theorem~\ref{thm-shell}, for each $1\le k\le\alpha$,
$\link_{\Omega_{k-1}}(\mu_k)
=
\cf^{1}\!\bigl(H_k\setminus S_k\bigr).$
If $S_k=\emptyset$, then
$\link_{\Omega_{k-1}}(\mu_k)=\cf^{1}(H_k)$, which is vertex decomposable by
Theorem~\ref{vd-even}.

Assume $S_k\neq\emptyset$.
We first show that $S_k\subseteq V(H)$.
If $y_i\in S_k$, then by Definition~\ref{def:swap-set} there exists a matching
$M'$ obtained from $M_k$ by replacing an edge $e\in M_k$ incident to $x_i$
with the whisker edge $\{x_i,y_i\}$, so that $M'\prec M_k$.
Since $x_i$ is adjacent to a unique whisker vertex, the edge $e$ must lie in
$E(H)$, and hence $M'$ contains one more whisker edge than $M_k$.
This contradicts the ordering in Assumptions and Notation~\ref{nota-ord}, where matchings with
fewer whisker edges precede those with more.
Thus $S_k\subseteq\{x_1,\ldots,x_n\}=V(H)$.

Since $S_k$ consists of vertices of $H$, deleting $S_k$ from $H_k$ by applying Proposition \ref{prop:delete-vertex-even} successively  yields a graph
of the form $ (G')^{e_1 \cdots e_q}$, possibly together with isolated vertices, for some whisker graph $G'$.
By Theorem~\ref{vd-even}, the complex
$\cf^{1}(H_k \setminus S_k)$ is vertex decomposable. Consequently,
$\link_{\Omega_{k-1}}(\mu_k)$ is vertex decomposable.
\end{proof}

We are now ready to state the main result of this paper, establishing the
shellability of $\cf^q(G)$.

\begin{theorem}\label{main}
Let $G$ be a graph as in Setup~\ref{setup:WH}.
Then $\cf^q(G)$ is shellable for all
\[
1\le q\le
\begin{cases}
\lceil m/2\rceil, & \text{ if }m<\infty,\\
\Mat(G), & \text{ if }m=\infty.
\end{cases}
\]
\end{theorem}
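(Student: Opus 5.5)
We will argue by induction on $q$, treating all whisker graphs at once. The induction uses the filtration $\Omega_0=\cf^q(G)\supseteq\Omega_1\supseteq\cdots\supseteq\Omega_\alpha$ from Notation~\ref{nota-shell} together with the shedding-face recursion of Lemma~\ref{lem:shedding}.

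\textbf{Base case.} For $q=1$, the complex $\cf^1(G)$ is the independence complex of a whisker graph. It is vertex decomposable by Dochtermann--Engstr\"om, and hence shellable.

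\textbf{Inductive step.} Let $q\ge 2$ lie in the stated range. We run a downward induction on $k$ and show that every $\Omega_{k}$, for $0\le k\le\alpha$, is shellable.
\begin{enumerate}
\item Start at the bottom, $k=\alpha$. By Theorem~\ref{thm-shell}(1),
\[
\Omega_\alpha=\cf^{q-1}\bigl(G\setminus\{x_1,y_1\}\bigr)*2^{\{x_1,y_1\}}.
\]
\item The graph $G\setminus\{x_1,y_1\}=W(H\setminus x_1)$ is again a whisker graph. Its base has girth $m'\ge m$, with $m'=\infty$ allowed.
\item Since $q-1\le\lceil m/2\rceil-1\le\lceil m'/2\rceil$ (or $q-1\le\nu(G\setminus\{x_1,y_1\})$ in the acyclic case, as $\nu$ drops by one), the inductive hypothesis applies. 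So $\cf^{q-1}(G\setminus\{x_1,y_1\})$ is shellable.
\item A join with a simplex preserves shellability, so $\Omega_\alpha$ is shellable.
\item Now pass from $\Omega_k$ to $\Omega_{k-1}$ for $k=\alpha,\dots,1$. We have $\Omega_{k}=\Omega_{k-1}\setminus\mu_k$ by definition.
\item By Theorem~\ref{thm-shell}(2), $\mu_k$ is a shedding face of $\Omega_{k-1}$.
\item By Theorem~\ref{link-vd}, $\link_{\Omega_{k-1}}(\mu_k)$ is vertex decomposable, hence shellable.
\item Lemma~\ref{lem:shedding} then gives that $\Omega_{k-1}$ is shellable. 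Reaching $k=0$ gives that $\Omega_0=\cf^q(G)$ is shellable.
\end{enumerate}

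\textbf{Edge cases.}
\begin{itemize}
\item If $q-1>\nu(G\setminus\{x_1,y_1\})$, then $\cf^{q-1}$ of that graph is a full simplex, which is trivially shellable. The same holds for a vertex set that is empty or too small.
\item In the acyclic case we need $q-1\le\nu(G\setminus\{x_1,y_1\})=n-1$, which holds because $q\le\nu(G)=n$.
\item The choice of $x_1$ is arbitrary; any vertex of $H$ works. If $H$ has a single vertex, everything is immediate.
\end{itemize}

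\textbf{Main obstacle.} The delicate point is checking that the deletion $\Omega_{k-1}\setminus\mu_k$ matches the filtration, and that the hypotheses of Theorem~\ref{link-vd} hold exactly in the range $q\le\lceil m/2\rceil$. The links involve $(q-1)$-matchings, so Theorem~\ref{vd-even} is only needed for $q-1<m/2$, and this is equivalent to $q\le\lceil m/2\rceil$. I would verify this correspondence carefully; the remaining steps are formal.
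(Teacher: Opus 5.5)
Your proposal is correct and follows the paper's proof essentially step for step. Both use induction on $q$ with the Dochtermann--Engstr\"om base case, the join description of $\Omega_\alpha$ from Theorem~\ref{thm-shell}(1), the shedding faces of Theorem~\ref{thm-shell}(2), the vertex-decomposable links of Theorem~\ref{link-vd}, and repeated application of Lemma~\ref{lem:shedding}. Your explicit check that $G\setminus\{x_1,y_1\}=W(H\setminus x_1)$ has base girth $m'\ge m$, so that $q-1$ stays in the range where the inductive hypothesis applies, is more careful than the paper, whose induction hypothesis is phrased too broadly as ``for every whisker graph $G'$''.
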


\begin{proof}
We proceed by induction on $q$. By \cite[Theorem~4.4]{DochEng09}, the statement holds for $q = 1$.
Assume that $\cf^{q-1}(G')$ is shellable for every whisker graph $G'$.
Consider the filtration
$$\Omega_0 \supset \Omega_1 \supset \cdots \supset \Omega_\alpha$$
from Notation~\ref{nota-shell}, where $\Omega_0=\cf^q(G)$ and
$\Omega_k=\Omega_{k-1}\setminus\mu_k$.
By Theorem~\ref{thm-shell}(2), each $\mu_k$ is a shedding face of $\Omega_{k-1}$.
By Theorem~\ref{link-vd}, the link
$\link_{\Omega_{k-1}}(\mu_k)$ is vertex decomposable, and hence shellable,
for all $k$.
By Theorem~\ref{thm-shell}(1),
$\Omega_\alpha
=
\cf^{q-1}\!\bigl(G\setminus\{x_1,y_1\}\bigr)
*
2^{\{x_1,y_1\}}.$
Since $G\setminus\{x_1,y_1\}$ is again a whisker graph,
$\cf^{q-1}\!\bigl(G\setminus\{x_1,y_1\}\bigr)$ is shellable by the induction
hypothesis.
The simplex $2^{\{x_1,y_1\}}$ is shellable by definition, and hence their join
$\Omega_\alpha$ is shellable.

Since $\Omega_\alpha$ is shellable and $\link_{\Omega_{k-1}}(\mu_k)$ is shellable for all $k$, by successive application of \cite[Lemma~3.4]{Russ11}, we conclude that $\cf^q(G)$ is shellable.
\end{proof}

Combining Theorems~\ref{pure-range} and~\ref{main}, we obtain the following description of the Cohen-Macaulay properties of the matching-free complexes of whisker graphs.

\begin{corollary}\label{main-cor}
Let $G$ be as in Setup~\ref{setup:WH}$,$ and let $m$ be as defined there.

\smallskip
\noindent
(1) Suppose $H$ is bipartite. If $m<\infty$, then for $1\le q\le \nu(G)$,
\[
\cf^q(G)=
\begin{cases}
\text{Cohen-Macaulay}, & \text{if } 1\le q\le \lfloor m/2\rfloor,\\
\text{pure}, & \text{if } \lfloor m/2\rfloor<q\le \nu(G).
\end{cases}
\]
If $m=\infty$, then $\cf^q(G)$ is Cohen-Macaulay for all $1\le q\le \nu(G)$.

\smallskip
\noindent
(2) Suppose $H$ is non-bipartite, let $\ell$ be the length of the smallest odd cycle of $H$, and set $n=|V(H)|$.

If $m$ is even, then for $1\le q\le \nu(G)$,
\[
\cf^q(G)=
\begin{cases}
\text{Cohen-Macaulay}, & \text{if } 1\le q\le m/2,\\
\text{pure}, & \text{if } m/2<q<\lceil \ell/2\rceil,\\
\text{not pure}, & \text{if } \lceil \ell/2\rceil\le q\le n-\lfloor \ell/2\rfloor,\\
\text{pure}, & \text{if } n-\lfloor \ell/2\rfloor<q\le \nu(G).
\end{cases}
\]

If $m$ is odd (hence $m=\ell$), then for $1\le q\le \nu(G)$,
\[
\cf^q(G)=
\begin{cases}
\text{Cohen-Macaulay}, & \text{if } 1\le q<\lceil m/2\rceil,\\
\text{sequentially Cohen-Macaulay but not pure}, 
& \text{if } q=\lceil m/2\rceil,\\
\text{not pure}, 
& \text{if } \lceil m/2\rceil<q\le n-\lfloor m/2\rfloor,\\
\text{pure}, 
& \text{if } n-\lfloor m/2\rfloor<q\le \nu(G).
\end{cases}
\]
\end{corollary}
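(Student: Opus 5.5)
The plan is to read off each entry of Corollary~\ref{main-cor} from Theorem~\ref{pure-range} (purity), Theorem~\ref{main} (shellability), and the standard implications. A pure shellable complex is Cohen-Macaulay, and any shellable complex (in the nonpure sense of Bj\"orner--Wachs) is sequentially Cohen-Macaulay. Conversely, a Cohen-Macaulay complex is pure, so non-purity excludes Cohen-Macaulayness. Throughout I restrict to $1\le q\le\nu(G)$. I also use the elementary girth facts: if $H$ is bipartite, every cycle is even, so $m$ is even or $m=\infty$; if $H$ is non-bipartite, then $\ell\ge m$, with $\ell=m$ exactly when $m$ is odd.

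\emph{Part (1).} Theorem~\ref{pure-range} gives that $\cf^q(G)$ is pure for every $q$.
\begin{itemize}
\item If $m=\infty$, Theorem~\ref{main} gives shellability for all $1\le q\le \nu(G)$, so $\cf^q(G)$ is Cohen-Macaulay throughout.
\item If $m<\infty$, then $m$ is even and $\lceil m/2\rceil=\lfloor m/2\rfloor=m/2$. Theorem~\ref{main} gives shellability for $q\le m/2$, hence Cohen-Macaulayness. For larger $q$ we record only purity.
\end{itemize}

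\emph{Part (2), $m$ even.}
\begin{itemize}
\item For $q\le m/2$: we have $m/2<\ell/2$ because $\ell>m$ (as $\ell$ is odd and $m$ even). Hence $q<\lceil \ell/2\rceil$, and Theorem~\ref{pure-range} gives purity. Theorem~\ref{main} gives shellability, so $\cf^q(G)$ is Cohen-Macaulay.
\item The remaining three ranges are exactly the pure and non-pure ranges of Theorem~\ref{pure-range}.
\end{itemize}

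\emph{Part (2), $m$ odd.} Here $m=\ell$.
\begin{itemize}
\item For $q<\lceil m/2\rceil$: Theorem~\ref{pure-range} gives purity and Theorem~\ref{main} gives shellability, so $\cf^q(G)$ is Cohen-Macaulay.
\item For $q=\lceil m/2\rceil$: Theorem~\ref{main} still gives shellability, hence sequential Cohen-Macaulayness. To see that this value lies in the non-pure window of Theorem~\ref{pure-range}, I need $\lceil m/2\rceil\le n-\lfloor m/2\rfloor$. This holds because $n\ge m=\lceil m/2\rceil+\lfloor m/2\rfloor$, since $H$ contains an $m$-cycle. So the complex is sequentially Cohen-Macaulay but not pure.
\item The last two ranges again come directly from Theorem~\ref{pure-range}.
\end{itemize}

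The only step needing care is the boundary case $q=\lceil m/2\rceil$ with $m$ odd. It requires both the inequality $n\ge m$ (to place this $q$ inside the non-pure window) and the nonpure version of shellability (to obtain sequential Cohen-Macaulayness rather than Cohen-Macaulayness). Everything else is direct bookkeeping of the ranges.
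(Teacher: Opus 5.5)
Your proposal is correct and takes the paper's route: the corollary follows by combining the purity criterion of Theorem~\ref{pure-range} with the shellability range of Theorem~\ref{main}, using that pure shellable complexes are Cohen-Macaulay, shellable complexes are sequentially Cohen-Macaulay, and Cohen-Macaulay complexes are pure. Your explicit checks fill in details the paper leaves implicit: that $\ell>m$ when $m$ is even, and that $n\ge m$ places $q=\lceil m/2\rceil$ inside the non-pure window.
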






\begin{proposition}\label{facet-complement}
Let $G = W(H)$ be a graph as in Setup \ref{setup:WH} and $m \neq 3$. Then $\cf^{n-1}(G)$ is pure, and 
$V(G)\setminus\{u,v\}$ is a facet of $\cf^{n-1}(G)$ if and only if $\{u,v\}$ satisfies one of the following:
\begin{enumerate}
    \item $u,v \in V(H)$ with $u \neq v$;
    
    \item $\{u,v\} = \{y_i,y_j\}$ such that $\{x_i,x_j\} \notin E(H)$;
    
    \item $\{u,v\} = \{x_i,y_j\}$ with $i \neq j$.
\end{enumerate}
\end{proposition}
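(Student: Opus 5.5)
The plan is to get purity from Theorem~\ref{pure-range}, then use the resulting uniform facet size to reduce the characterization to a perfect-matching question on $G\setminus\{u,v\}$.

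\emph{Purity.} We apply Theorem~\ref{pure-range} with $q=n-1$. If $H$ has no odd cycle, $\cf^{n-1}(G)$ is pure. Otherwise, let $\ell$ be the length of a smallest odd cycle. Since $m\neq 3$, $H$ has no triangle, so $\ell\ge 5$ and $\lfloor \ell/2\rfloor\ge 2$. Hence $q=n-1>n-\lfloor \ell/2\rfloor$, and Theorem~\ref{pure-range} again gives purity. (We implicitly need $n-1\le \nu(G)=n$, which holds.) This is the only place where the hypothesis $m\neq 3$ enters.

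\emph{Reduction to a face condition.} By Theorem~\ref{thm:dim}, $\dim \cf^{n-1}(G)=2n-3$, so every facet has exactly $2n-2=|V(G)|-2$ vertices. By purity, a set $V(G)\setminus\{u,v\}$ is a facet if and only if it is a face. That is, the facets are exactly the sets $V(G)\setminus\{u,v\}$ for which $G\setminus\{u,v\}$ has no matching of size $n-1$. Since $G\setminus\{u,v\}$ has $2n-2$ vertices, this is equivalent to: $G\setminus\{u,v\}$ has no perfect matching.

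\emph{Case analysis on $\{u,v\}$.} Recall that each $y_k$ has $x_k$ as its only neighbour.
\begin{enumerate}
\item If $u=x_i$ and $v=x_j$ with $i\ne j$, then $y_i$ and $y_j$ are isolated in $G\setminus\{u,v\}$, so there is no perfect matching, and the set is a facet.
\item If $\{u,v\}=\{x_i,y_j\}$ with $i\neq j$, then $y_i$ is isolated, so again there is no perfect matching, and the set is a facet.
\item If $\{u,v\}=\{x_i,y_i\}$, the remaining $n-1$ whisker edges form a perfect matching, so the set is not a facet. This is consistent with its exclusion from the list.
\item If $\{u,v\}=\{y_i,y_j\}$, then in any perfect matching each $y_k$ with $k\neq i,j$ must be matched to $x_k$. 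This forces $x_i$ to be matched with $x_j$, which is possible if and only if $\{x_i,x_j\}\in E(H)$. So the set is a facet exactly when $\{x_i,x_j\}\notin E(H)$.
\end{enumerate}
These cases exhaust all $2$-subsets of $V(G)$, which yields the stated equivalence.

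The only step needing care is the purity reduction: "face of maximal size implies facet" is trivial, but "every facet has size $2n-2$" requires Theorem~\ref{pure-range}, and that is where $m\neq 3$ is essential. Without it, a triangle $x_1x_2x_3$ would give facets of smaller size, and the complement description would fail.
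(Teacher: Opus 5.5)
Your proof is correct and follows essentially the same route as the paper: purity via Theorem~\ref{pure-range} (where $m\neq 3$ forces $\lfloor \ell/2\rfloor\ge 2$), facet size $2n-2$ via Theorem~\ref{thm:dim}, and then a case check on the deleted pair. Your isolated-vertex and forced-whisker-matching arguments supply the justification the paper only asserts for whether $G\setminus\{u,v\}$ has a perfect matching; and note that the equivalence for a given set $V(G)\setminus\{u,v\}$ needs only the dimension formula (a face of maximal size is a facet), so purity is really used just to conclude that every facet has this form.
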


\begin{proof}
For any graph $H$ with $m \neq 3$, the complex $\cf^{n-1}(G)$ is pure by Theorem~\ref{pure-range}. 
By Theorem~\ref{thm:dim}, every facet of $\cf^{n-1}(G)$ has cardinality $2n-2$. 
Hence each facet is of the form $V(G)\setminus\{u,v\}$ for some $u,v \in V(G)$.

($\Rightarrow$) Suppose $\{u,v\}$ satisfies one of the stated conditions. 
Then $V(G)\setminus\{u,v\}$ contains exactly $n-2$ disjoint edges of $G$. 
Moreover, adding either $u$ or $v$ produces $n-1$ disjoint edges. 
Thus $V(G)\setminus\{u,v\}$ is maximal with respect to containing $n-2$ disjoint edges, 
and hence it is a facet of $\cf^{n-1}(G)$.

($\Leftarrow$) Conversely, let $V(G)\setminus\{u,v\}$ be a facet of $\cf^{n-1}(G)$. 
Since every facet has cardinality $2n-2$ and contains exactly $n-2$ disjoint edges, 
the pair $\{u,v\}$ cannot be a whisker edge $\{x_i,y_i\}$, 
as otherwise $V(G)\setminus\{u,v\}$ would already contain $n-1$ disjoint edges.
Furthermore, if $\{u,v\}=\{y_i,y_j\}$ for some $1 \le i,j \le n$ and 
$\{x_i,x_j\} \in E(H)$, then again $V(G)\setminus\{u,v\}$ would contain 
$n-1$ disjoint edges, contradicting maximality.
Therefore, $\{u,v\}$ must satisfy one of the three stated conditions.
\end{proof}

We now combine the preceding results to obtain complete characterizations of the Cohen-Macaulayness of the second and $(n-1)$-st matching-free complexes of $G$ in terms of combinatorial properties of the base graph $H$. 
For convenience, we recall the definitions of the complement complex and the Alexander dual of a simplicial complex.

Let $\Delta$ be a simplicial complex on the vertex set $V=\{x_1,\ldots,x_n\}$. 
The \emph{complement complex} of $\Delta$, denoted by $\Delta^{c}$, is defined by
$\Delta^{c}
=
\left\langle
V \setminus F
\;\middle|\;
F \text{ is a facet of } \Delta
\right\rangle.$
The \emph{Alexander dual} of $\Delta$ is the simplicial complex
$\Delta^\vee
=
\{\, V \setminus F \mid F \notin \Delta \,\}.$

\begin{theorem}\label{cm-chra}
Let $G$ be a graph as in Setup~\ref{setup:WH}.
\begin{enumerate}
    \item The complex $\cf^{2}(G)$ is Cohen-Macaulay if and only if $H$ does not contain an induced cycle of length $3$.
    
    \item We have $m=\infty$ if and only if $\cf^{\,n-1}(G)$ is Cohen-Macaulay.
\end{enumerate}
\end{theorem}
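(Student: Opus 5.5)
Both parts will be handled the same way: get the "if" direction from the shellability and purity results already proved, and get the "only if" direction by producing a non-pure facet or a homological obstruction.

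\textbf{Part (1).} Suppose $H$ has no induced $3$-cycle. Since a shortest cycle is always induced, this means $m\ge 4$, so $2\le \lfloor m/2\rfloor$ (or $m=\infty$). Theorem~\ref{main} makes $\cf^2(G)$ shellable. Since $2<\lceil \ell/2\rceil$ whenever $\ell\ge 5$, and $H$ bipartite gives purity outright, Theorem~\ref{pure-range} makes $\cf^2(G)$ pure. A pure shellable complex is Cohen-Macaulay.

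Conversely, suppose $H$ contains a triangle $x_1x_2x_3$, so $\ell=3$ and $\lceil \ell/2\rceil=2$. Theorem~\ref{pure-range} then says $\cf^2(G)$ is pure only when $2>n-1$, i.e.\ $n\le 2$, which is impossible in the presence of a triangle. So $\cf^2(G)$ is not pure, hence not Cohen-Macaulay. I would also record the explicit small facet from the proof of Theorem~\ref{pure-range}: take $f=\{x_1,x_2\}$ and add $x_3$ together with one vertex from each remaining whisker pair, chosen so that no edge is created. This gives a facet of size $n$, while Theorem~\ref{thm:dim} says the dimension is $n$.

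\textbf{Part (2), the "if" direction.} If $m=\infty$, Theorem~\ref{main} gives shellability of $\cf^{q}(G)$ for every $q\le\nu(G)=n$, in particular for $q=n-1$. Theorem~\ref{pure-range} gives purity because $H$ is bipartite. Hence $\cf^{n-1}(G)$ is Cohen-Macaulay.

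\textbf{Part (2), the "only if" direction.} Suppose $H$ has a cycle. If $m=3$, then $\ell=3$, and purity of $\cf^{n-1}(G)$ would require $n-1>n-1$ or $n-1<2$, both false. So $\cf^{n-1}(G)$ is not pure and not Cohen-Macaulay. Assume from now on $m\ge4$, so Proposition~\ref{facet-complement} applies. The complex $\cf^{n-1}(G)$ is pure with facets $V(G)\setminus\{u,v\}$, and the non-faces of size two in its complement complex are exactly the whisker pairs $\{x_i,y_i\}$ and the pairs $\{y_i,y_j\}$ with $x_ix_j\in E(H)$. The complement complex $\Delta^c$ is therefore the graph (a $1$-dimensional complex) on $V(G)$ whose edges are listed in the proposition.

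I would then use Eagon-Reiner: $\Delta$ is Cohen-Macaulay if and only if $I_{\Delta^\vee}$ has a linear resolution. Here the facets of $\Delta^c$ generate the facet ideal of $\Delta^\vee$. Concretely, $I_{\Delta^\vee}$ is generated by the monomials $uv$ with $\{u,v\}$ a facet complement, so $I_{\Delta^\vee}=I(\overline{K})$, where $K$ is the graph with edges the whisker edges together with $\{y_iy_j : x_ix_j\in E(H)\}$, and $\overline{K}$ denotes its complement. By Fröberg, $I(\overline{K})$ has a linear resolution if and only if $K$ is chordal. The graph $K$ contains a copy of $H$ on the $y$-vertices, so an induced cycle of $H$ of length $m\ge4$ gives an induced cycle of $K$ of length at least $4$. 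Thus $K$ is not chordal, and $\cf^{n-1}(G)$ is not Cohen-Macaulay.

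The main obstacle will be carefully identifying $I_{\Delta^\vee}$ with the edge ideal of the complement of $K$, including the bookkeeping over which pairs count as facets.
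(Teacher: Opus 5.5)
Your proposal is correct and follows the paper's proof essentially step for step. The positive directions combine purity from Theorem~\ref{pure-range} with shellability from Theorem~\ref{main}, and the case $m=3$ is ruled out by non-purity. For $m\ge 4$ you use Eagon--Reiner, identify $I_{\cf^{\,n-1}(G)^\vee}$ via Proposition~\ref{facet-complement} with the edge ideal of the complement of $K\cong W(H)$, and apply Fr\"oberg's theorem, exactly as the paper does. (One small wording slip: it is the facet ideal of $\Delta^c$, not of $\Delta^\vee$, that equals $I_{\Delta^\vee}$, which is what you actually use.)
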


\begin{proof}
(1) Assume that $\cf^{2}(G)$ is Cohen-Macaulay. Then it is pure. 
By Theorem~\ref{pure-range}, this implies that $H$ does not contain an induced cycle of length $3$.
Conversely, suppose that $H$ does not contain an induced cycle of length $3$. 
If $H$ is a forest, then $\cf^{2}(G)$ is Cohen-Macaulay by Corollary~\ref{main-cor}. 
If $H$ is not a forest, then its girth is at least $4$. 
Again by Corollary~\ref{main-cor}, it follows that $\cf^{2}(G)$ is Cohen-Macaulay.

\medskip
\noindent
(2) If $m=\infty$, then $\cf^{n-1}(G)$ is Cohen-Macaulay by Corollary~\ref{main-cor}. 
Conversely, assume that $\cf^{n-1}(G)$ is Cohen-Macaulay and suppose $m<\infty$. 
Let $\ell$ denote the length of the smallest induced cycle of $G$. 
Since $\cf^{n-1}(G)$ is Cohen-Macaulay, it is pure; hence, by Theorem~\ref{pure-range}, $\ell>3$, and therefore $m\ge 4$.
By \cite[Theorem~3]{eagon}, $\cf^{n-1}(G)$ is Cohen-Macaulay if and only if 
$I_{\cf^{n-1}(G)^\vee}$ has a linear resolution. 
By \cite[Lemma~1.2]{HH04} and Proposition~\ref{facet-complement}, we have
$I_{\cf^{n-1}(G)^\vee}=I(T),$
where $T$ is the graph whose edge set consists of the facets of $(\cf^{n-1}(G))^c$, and $I(T)$ denotes its edge ideal.
By Fr\"oberg's theorem~\cite{froberg}, $I(T)$ has a linear resolution if and only if the complement graph $\overline{T}$ is chordal. 
From Proposition~\ref{facet-complement},
\[
E(\overline{T})
=
\{\{x_i, y_i\} \mid 1 \le i \le n\}
\;\cup\;
\{\{y_i, y_j\} \mid \{x_i, x_j\} \in E(H)\}.
\]
Hence the induced subgraph of $\overline{T}$ on $\{y_1,\dots,y_n\}$ is isomorphic to $H$, and each $x_i$ is a pendant vertex adjacent only to $y_i$. Thus $\overline{T}\cong W(H)$.
Since $m\ge 4$, the graph $H$ contains an induced cycle of length $m\ge 4$, which induces an induced cycle of the same length in $\overline{T}$. Therefore $\overline{T}$ is not chordal, and hence $I(T)$ does not have a linear resolution. This contradicts the Cohen-Macaulayness of $\cf^{n-1}(G)$.
Therefore $m=\infty$.
\end{proof}

The following example shows that the bound on $q$ in
Theorem~\ref{main} is sharp.

\begin{example}\label{sharp-bd}
Let $H=C_6$ be the cycle on vertices $\{x_1,\ldots,x_6\}$, and let
$G=W(H)$ be its whisker graph.
Suppose, for contradiction, that $\cf^{4}(G)$ is shellable.
By~\cite[Proposition~3.7]{Russ11}, every link of a shellable simplicial complex is shellable.
Consider the face $F=\{x_1,x_2,x_3,x_4,x_5,x_6\}$.
Then
$\link_{\cf^{4}(G)}(F)=\cf^{1}(K_{3,3}),$
where $K_{3,3}$ is the complete bipartite graph with bipartition
$\{y_1,y_3,y_5\}$ and $\{y_2,y_4,y_6\}$.
However, by~\cite[Corollary~2.10]{VanVilla}, the complex
$\cf^{1}(K_{3,3})$ is not shellable, yielding a contradiction.
Therefore $\cf^{4}(G)$ is not shellable. 
In particular, by Theorem~\ref{cm-chra}, $\cf^{5}(G)$ is not Cohen-Macaulay.
\end{example}

Let $H$ be a graph and let $W(S)$ denote the set of whisker edges attached to a subset $S \subseteq V(H)$.
The graph $H \cup W(S)$ has independence complex $\cf^{1}(H \cup W(S))$.
Francisco and H\`a~\cite{FH} proved that $\cf^{1}(H \cup W(S))$ is sequentially Cohen-Macaulay whenever $S$ is a vertex cover of $H$.
This shows that, in constructing sequentially Cohen-Macaulay graphs via whiskers, the positions of the whiskers are more important than their number.
A natural follow‑up question is whether a similar phenomenon holds for higher matching‑free complexes: for $q \ge 2$, is $\cf^{q}(H \cup W(S))$ sequentially Cohen-Macaulay?  
The next example answers this negatively.

\begin{example}\label{end-ex}
Let $H = C_{5}$ be the 5‑cycle with vertices $\{x_{1},\dots,x_{5}\}$ and take $S = \{x_{1},x_{3},x_{5}\}$, which is a vertex cover of $H$.
For any $t \ge 1$, let $G = H \cup W_{t}(S)$ be the graph obtained by attaching $t$ whiskers to each vertex of $S$; i.e.,
\[
\{x_{1},\alpha_{1}\},\dots,\{x_{1},\alpha_{t}\},\;
\{x_{3},\beta_{1}\},\dots,\{x_{3},\beta_{t}\},\;
\{x_{5},\gamma_{1}\},\dots,\{x_{5},\gamma_{t}\}.
\]
Assume, for contradiction, that $\cf^{2}(G)$ is sequentially Cohen-Macaulay.
Sequential Cohen-Macaulayness is inherited by links~\cite[Proposition~3.7]{Russ11}; therefore the link of any face of $\cf^{2}(G)$ must be sequentially Cohen-Macaulay.
Consider the face
$F = \{\alpha_{1},\dots,\alpha_{t},\,x_{3},\,x_{4}\}.$
One checks that
$\link_{\cf^{2}(G)}(F)=\cf^{1}(K),$
where $K$ is the complete bipartite graph with bipartition
$\{x_{5},\gamma_{1},\dots,\gamma_{t}\}$ and  $\{x_{2},\beta_{1},\dots,\beta_{t}\}.$
By~\cite[Corollary~2.10]{VanVilla}, the independence complex $\cf^{1}(K)$ is \emph{not} sequentially Cohen-Macaulay, contradicting our assumption.
Hence $\cf^{2}(G)$ cannot be sequentially Cohen-Macaulay, regardless of the value of $t$.
\end{example}

\section{Depth of Squarefree Powers of Edge Ideals}\label{depth-sq}
In this section, we determine the depth of the squarefree powers of edge ideals of whisker graphs. The main result of this section is the following theorem.

\begin{theorem}\label{depth-formula}
Let $G$ be a graph as in Setup~\ref{setup:WH}. Then
\[
\depth\!\bigl(R/I(G)^{[q]}\bigr)
=
\begin{cases}
n+q-1, & \text{ if }1\le q\le \lfloor m/2\rfloor,\\
n, & \text{ if } m \text{ odd and } q=\lceil m/2\rceil,\\
n+q-1, & \text{ if } m=\infty \text{ and } 1\le q\le \nu(G).
\end{cases}
\]
\end{theorem}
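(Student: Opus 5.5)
The plan is to read off depth from the Cohen–Macaulay and shellability results already established, using the skeleton formula $\depth(R/I_\Delta)=\max\{i \mid \Delta^{(i)}\text{ is Cohen-Macaulay}\}+1$ together with Proposition~\ref{SR}, which identifies $I(G)^{[q]}$ with $I_{\cf^q(G)}$.

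\emph{Cohen–Macaulay ranges.} In the first and third cases ($1\le q\le\lfloor m/2\rfloor$, or $m=\infty$), Corollary~\ref{main-cor} shows that $\cf^q(G)$ is Cohen-Macaulay. Hence $R/I(G)^{[q]}$ is Cohen-Macaulay, and its depth equals its dimension. By Theorem~\ref{thm:dim}, $\dim R/I(G)^{[q]} = \dim\cf^q(G)+1 = n+q-1$. These two cases are therefore immediate.

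\emph{Odd girth, $q=\lceil m/2\rceil$.} Set $\Delta=\cf^q(G)$. By Theorem~\ref{main}, $\Delta$ is shellable, hence sequentially Cohen-Macaulay. For a sequentially Cohen-Macaulay complex, the depth equals the minimal facet cardinality; equivalently, by the skeleton characterization, $\Delta^{(i)}$ is Cohen-Macaulay exactly when $i+1\le\min_F|F|$. So it suffices to compute the minimal facet size and show it equals $n$.

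\emph{Lower bound on facet size.} Let $F$ be a facet and $f\subset F$ a set of $q-1$ disjoint edges, $m'$ of them in $H$. Proposition~\ref{proposition-pure} gives
\[
|F| = 2(q-1) + (n-m'-q+1) + |F\cap Y| = n+q-1-m'+|F\cap Y|.
\]
Since $m'\le q-1$, we get $|F|\ge n$ regardless of $|F\cap Y|$.

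\emph{Attaining the bound.} Take a shortest odd cycle $x_1\cdots x_m$, with $m=2q-1$. Use the $q-1=\lfloor m/2\rfloor$ edges $\{x_1,x_2\},\ldots,\{x_{m-2},x_{m-1}\}$, so that $f$ has no whisker edges and $m'=q-1$. Apply the construction in the proof of Theorem~\ref{pure-range}: include $x_m$ in the $N$-part and complete $N$ and $S$ by maximum independent sets. Every $y\in Y$ is then even-connected to $x_m$, so by Proposition~\ref{even2} the resulting set $\bar f$ is a facet with $F\cap Y=\emptyset$. That proof gives $|\bar f| = n+q-1-m' = n$.

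One point needs checking. In Theorem~\ref{pure-range} the remaining $q-1-m'$ edges were chosen from $G\setminus\{x_\ell\}$; here there are none, so no range condition on $q$ is needed, and the construction works even when $q>n-\lfloor m/2\rfloor$. Combining, the minimal facet size is $n$, and hence $\depth = n$.

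\emph{Main obstacle.} The delicate step is justifying that depth equals the minimal facet size for this non-pure complex. I would argue it directly. The pure $(n-1)$-skeleton of a shellable complex is shellable (Björner–Wachs), hence Cohen-Macaulay. The full $(n-1)$-skeleton $\Delta^{(n-1)}$ coincides with this pure skeleton, because every face of size at most $n$ lies in a facet of size at least $n$. Finally, $\Delta^{(n)}$ is not pure, since the facet $\bar f$ of size $n$ remains maximal in it, so $\Delta^{(n)}$ is not Cohen-Macaulay. This gives $\max\{i\mid\Delta^{(i)}\text{ CM}\}=n-1$ and therefore $\depth = n$.
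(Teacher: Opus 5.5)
Your proof is correct and is essentially the paper's argument. The Cohen--Macaulay cases come from Corollary~\ref{main-cor} together with Theorem~\ref{thm:dim}, and in the odd case you combine the bound $|F|\ge n+q-1-m'\ge n$ with a facet of size $n$. With the right choice, that facet is exactly the paper's $V(C_m)\cup\{y_i \mid x_i\notin V(C_m)\}$. You must, however, complete $N$ and $S$ by suitable choices such as the whisker vertices $y_i$, not by arbitrary maximum independent sets: an additional $x_i$ in $N$ is even-connected to $x_m$, and one in $S$ may be adjacent to $x_m$, and either would produce $q$ disjoint edges. The only real difference from the paper is that you justify ``depth $=$ minimal facet size'' directly, via the Bj\"orner--Wachs pure-skeleton result and the skeleton formula for depth, whereas the paper cites Faridi's theorem for sequentially Cohen--Macaulay complexes.
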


\begin{proof}
If $1 \le q \le \left\lfloor \dfrac{m}{2} \right\rfloor$, then
$\depth\!\left(\frac{R}{I(G)^{[q]}}\right)=n+q-1$
by Corollary~\ref{main-cor}. 
Now assume that $m$ is odd and 
$q=\left\lceil \dfrac{m}{2} \right\rceil=\frac{m+1}{2}.$
Let
$d'=\min\{ |F| \mid F \text{ is a facet of } \cf^q(G)\}.$
Let $C_m$ be an odd cycle of $H$ of length $m$. 
Since $\nu(C_m)=\lfloor m/2 \rfloor = q-1$, the set 
$\{x_1,\dots,x_m\}$ contains no $q$ pairwise disjoint edges and hence is a face of $\cf^q(G)$.
Let $S=V(H)\setminus V(C_m)$, where $|S|=n-m$, and define
\[
F=\{x_1,\dots,x_m\}\cup
\{y \mid \{x,y\}\text{ is a whisker edge of }G,\ x\in S\}.
\]
Then $F$ is a facet of $\cf^q(G)$, because adding any vertex not in $F$ produces $q$ pairwise disjoint edges. 
Hence $|F|=n$, and therefore $d' \le n$.
On the other hand, let $F$ be any facet of $\cf^q(G)$. By the proof of Theorem~\ref{thm:dim}, we have $|F| \ge n$, and hence $d' \ge n$.
Thus $d'=n$. 
By Corollary~\ref{main-cor} and \cite[Theorem~4]{Faridi13}, we conclude that
$\depth\!\left(\frac{R}{I(G)^{[q]}}\right)=n.$
\end{proof}
We next focus on the case where the base graph contains exactly one cycle. 
Recall that a connected graph with precisely one cycle is called unicyclic. 
In this setting, we obtain the following upper bound for the depth in the range where purity fails.
\begin{theorem}\label{uni-depth}
Let $G$ be a graph as in Setup~\ref{setup:WH}. 
If $H$ is a unicyclic graph and $m$ is odd, then
\[
\depth\!\left(\frac{R}{I(G)^{[q]}}\right) 
\le n+q-1-\left\lfloor \frac{m}{2} \right\rfloor
\quad \text{for all } 
\left\lceil \frac{m}{2} \right\rceil
\le q \le 
n- \left\lfloor \frac{m}{2} \right\rfloor .
\]
\end{theorem}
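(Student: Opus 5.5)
The plan is to bound the depth by the size of a small facet. The key tool is the standard inequality
\[
\depth(R/I_\Delta)\le \min\{|F| \mid F \text{ a facet of } \Delta\},
\]
which follows from the skeleton characterization recalled in Section~\ref{pre}. If $\Delta^{(i)}$ is Cohen-Macaulay, then it is pure of dimension $i$. Every facet $F$ of $\Delta$ with $|F|-1<i$ would still be a facet of $\Delta^{(i)}$, so this forces $i\le \min|F|-1$. Alternatively, one can cite the same fact from \cite[Theorem~4]{Faridi13}, as in the proof of Theorem~\ref{depth-formula}. So it suffices to exhibit, for every $q$ in the range $\lceil m/2\rceil\le q\le n-\lfloor m/2\rfloor$, a facet of $\cf^q(G)$ of cardinality $n+q-1-\lfloor m/2\rfloor$.

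The facet will be the one built in the non-purity half of the proof of Theorem~\ref{pure-range}. Since $m$ is odd and $H$ is unicyclic, the unique cycle $C=x_1x_2\cdots x_mx_1$ is the smallest odd cycle, so $\ell=m$. Set $k=\lfloor m/2\rfloor$. Take the $k$ edges $\{x_1,x_2\},\{x_3,x_4\},\dots,\{x_{m-2},x_{m-1}\}$ of $H$. Complete them with $q-1-k$ further disjoint whisker edges $\{x_j,y_j\}$ with $j\notin\{1,\dots,m\}$. This is possible because $q-1-k\le n-m$, which is equivalent to $q\le n-k$.

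Using only whisker edges for the completion keeps exactly $m'=k$ edges of $H$ in $f$. Then build $\bar f=f\cup S_2\cup S_3$ exactly as in that proof: choose one vertex from each remaining whisker pair so that $G[S_2\cup S_3]$ adds no edges, and put $x_m\in S_3$. Every $y\in Y$ is even-connected to $x_m$ along the odd cycle with respect to the $k$ chosen cycle edges. By Proposition~\ref{even2}, no vertex of $Y$ can be added, and no other vertex can be added either. Hence $\bar f$ is a facet. Proposition~\ref{proposition-pure} then gives
\[
|\bar f|=2(q-1)+(n-k-q+1)=n+q-1-k.
\]
Combined with the depth inequality, this yields the claimed bound.

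The main obstacle is checking that $\bar f$ is genuinely maximal. Two points need care. First, $S_2\cup S_3$ must be chosen so that $\bar f$ still has matching number exactly $q-1$. The safe choice is to take, for each remaining pair $\{x_j,y_j\}$, the whisker vertex $y_j$, except to take $x_m$ for the pair of $x_m$. Second, adding any $x_j$ (with $j\neq m$, $j$ not in $\supp f$) must create $q$ disjoint edges. This holds via the whisker edge $\{x_j,y_j\}$, since $y_j\in\bar f$. Adding $y_m$ creates the edge $\{x_m,y_m\}$. Adding a $y\in Y$ is excluded by the even-connection argument above. Unicyclicity is used only to guarantee $\ell=m$ and the existence of the whisker edges off the cycle; if time permits, I would note that the argument in fact works whenever $m=\ell$.
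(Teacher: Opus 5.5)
Your proof is correct and is essentially the paper's argument: take the facet $\bar f$ from the non-purity half of the proof of Theorem~\ref{pure-range}, which has $n+q-1-\lfloor m/2\rfloor$ vertices, and use that the depth is at most the minimum facet cardinality. Your version is in fact more careful than the paper's. Completing $f$ only by off-cycle whisker edges, and adding only the remaining $y_j$ together with $x_m$, makes the facet check transparent: those $y_j$ are isolated in $G[\bar f]$, and the other $2q-1$ vertices cannot carry $q$ disjoint edges. Your skeleton argument also proves $\depth(R/I_\Delta)\le\min|F|$ for arbitrary $\Delta$ without leaning on \cite[Theorem~4]{Faridi13}.
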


\begin{proof}
Define 
$d' := \min\{\, |F| \mid F \text{ is a facet of } \cf^q(G) \,\}.$
Let $F$ be any facet of $\cf^q(G)$. 
If $G[F]$ does not induce the odd cycle of $H$, then arguments similar to those in Case $\bar V = \emptyset$ and Case $\bar V \neq \emptyset$ of Proposition~\ref{pure-m} show that 
$|F| = n + q - 1$. Hence, $d' \leq n+q-1-\left\lfloor\frac{m}{2}\right\rfloor$ by the proof of Theorem \ref{pure-range}.
Otherwise, suppose that $G[F]$ induces the odd cycle of $H$.
Then the construction of $\bar f$ in the proof of Theorem~\ref{pure-range} shows that 
$d' = n + q - 1 - \left\lfloor \frac{m}{2} \right\rfloor,$
and moreover, $\bar f$ is a facet of cardinality 
$n + q - 1 - \left\lfloor \frac{m}{2} \right\rfloor.$
Therefore,
$d' = n + q - 1 - \left\lfloor \frac{m}{2} \right\rfloor$
in this case.
By \cite[Theorem~4]{Faridi13}, we conclude that
$\depth\!\left(\frac{R}{I(G)^{[q]}}\right)
\le n+q-1-\left\lfloor \frac{m}{2} \right\rfloor,$
as desired.
\end{proof}

The following conjecture was proposed in~\cite{DRS24}.

\begin{conj}\label{conj:cycle}
\cite[Conjecture~6.3]{DRS24}
Let $G=W(C_n)$ be the whisker graph of the cycle $C_n$.
Then
\[
\depth\!\bigl(R/I(G)^{[q]}\bigr)
=
\begin{cases}
n+q-1, & \text{ if }1\le q\le \lfloor n/2\rfloor,\\
2q-1, & \text{ if }\lfloor n/2\rfloor<q\le n.
\end{cases}
\]
\end{conj}

As a consequence of Theorem \ref{depth-formula}, we verify
Conjecture~\ref{conj:cycle} in the range
for certian ranges.

\begin{corollary}\label{cor:partial-cycle}
Let $G = W(C_n)$. Then
\[
\depth\!\bigl(R/I(G)^{[q]}\bigr)
=
\begin{cases}
n+q-1, & \text{ if }1\le q\le \lfloor n/2\rfloor,\\
2q-1, & \text{ if }n \text{ odd and } q=\lceil n/2\rceil.
\end{cases}
\]
\end{corollary}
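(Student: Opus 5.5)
We specialize Theorem~\ref{depth-formula} to $H=C_n$. Here $H$ has a single cycle, so $m=\girth(C_n)=n<\infty$, and $|V(H)|=n$. The proof is a direct substitution, split into the two ranges of $q$.

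For $1\le q\le \lfloor n/2\rfloor=\lfloor m/2\rfloor$, the first case of Theorem~\ref{depth-formula} applies. It gives $\depth(R/I(G)^{[q]})=n+q-1$, which is the first line of the claim.

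Now suppose $n$ is odd and $q=\lceil n/2\rceil=\lceil m/2\rceil$. Then $m=n$ is odd, so the second case of Theorem~\ref{depth-formula} gives $\depth(R/I(G)^{[q]})=n$. It remains to compare this with the conjectured value $2q-1$. Since $n$ is odd, $q=(n+1)/2$, and so $2q-1=n$. The two values agree.

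Finally, this matches Conjecture~\ref{conj:cycle} in these ranges. For odd $n$ we have $\lceil n/2\rceil=\lfloor n/2\rfloor+1$, so this $q$ is the first index of the second branch of the conjecture. Hence the conjecture holds for all $q\le \lfloor n/2\rfloor$, and also for $q=\lceil n/2\rceil$ when $n$ is odd.

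There is no real obstacle here; the only point requiring care is confirming that $\girth(C_n)=n$ and checking the arithmetic identity $2q-1=n$ when $q=\lceil n/2\rceil$ and $n$ is odd.
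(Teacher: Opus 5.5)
Your proposal is correct and matches the paper's approach: the corollary is obtained by specializing Theorem~\ref{depth-formula} to $H=C_n$, where $m=\girth(C_n)=n$. Your check that $2q-1=n$ when $n$ is odd and $q=\lceil n/2\rceil$ is exactly the remaining arithmetic needed.
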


As an immediate application of the preceding results, we recover conclusions
of~\cite{DRS24,FM24}.
\begin{corollary}\label{know-rs}
Let $G$ be a graph.

\begin{enumerate}
\item
Suppose $G$ is a forest such that $\cf^1(G)$ is Cohen-Macaulay of dimension $n$. Then:

\begin{enumerate}
\item[(i)] \cite[Theorem~3.5]{DRS24}
$\depth\!\bigl(R/I(G)^{[q]}\bigr)=n+q-1$
for all $1\le q\le \Mat(G).$

\item[(ii)] \cite[Corollary~3.6]{DRS24}
$R/I(G)^{[q]}$ is Cohen-Macaulay for all $1\le q\le \Mat(G)$.
\end{enumerate}

\item \cite[Theorem~5.8]{DRS24}
Let $W(C_n)$ be the whisker graph of the cycle $C_n$ of length $n$. Then
\[
\depth\!\bigl(R/I(W(C_n))^{[2]}\bigr)
=
\begin{cases}
3, & \text{ if } n=3,\\
n+1, & \text{ if } n>3.
\end{cases}
\]

\item \cite[Corollary~2.10]{FM24}
The only whisker graphs $G$ for which $I(G)^{[q]}$ is Cohen-Macaulay
for all $1\le q\le \Mat(G)$ are Cohen-Macaulay forests.
\end{enumerate}
\end{corollary}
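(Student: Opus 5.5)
The statement to be proved is Corollary~\ref{know-rs}. Each part follows by specializing Theorems~\ref{main}, \ref{depth-formula}, \ref{cm-chra} and Corollary~\ref{main-cor}. The plan is to identify, in each case, the whisker structure and the girth $m$ of the base graph, and then read off the conclusion.

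For part (1), I first recall the classical fact (Villarreal) that a forest $G$ has Cohen-Macaulay independence complex exactly when $G=W(H)$ for a forest $H$. The degenerate case, where $G$ is a single edge or has isolated vertices, I treat directly, since those contribute only simplex join factors. Then $m=\girth(H)=\infty$, and Theorem~\ref{main} gives shellability of $\cf^q(G)$ for all $1\le q\le\nu(G)$. Theorem~\ref{pure-range} gives purity, because $H$ has no odd cycle. A pure shellable complex is Cohen-Macaulay, which yields (ii). For (i), the third case of Theorem~\ref{depth-formula} gives $\depth = |V(H)|+q-1$. Here the ``$n$'' of the corollary, $\dim\cf^1(G)=n$ in the paper's normalization, must be matched with $|V(H)|$ via Theorem~\ref{thm:dim} at $q=1$. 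This bookkeeping of the dimension convention is the only delicate point in part (1), and I would state it explicitly.

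For part (2), take $H=C_n$, so $m=n$. If $n>3$, then $2\le\lfloor n/2\rfloor$, and the first case of Theorem~\ref{depth-formula} gives $\depth = n+1$. If $n=3$, then $m$ is odd and $q=2=\lceil 3/2\rceil$, so the second case gives $\depth = n = 3$. Equivalently, Corollary~\ref{cor:partial-cycle} can be invoked directly.

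For part (3), one direction is part (1)(ii). For the converse, suppose $G=W(H)$ and $I(G)^{[q]}$ is Cohen-Macaulay for all $q\le\nu(G)=n$. If $n\ge 2$, take $q=n-1$. Then $\cf^{n-1}(G)$ is Cohen-Macaulay, and Theorem~\ref{cm-chra}(2) forces $m=\infty$, so $H$, and hence $G$, is a forest. The case $n=1$ is trivial. Thus $G$ is a Cohen-Macaulay forest.

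I expect the main obstacle to be only the matching of hypotheses. In (1) this means converting ``forest with Cohen-Macaulay independence complex'' into ``whisker graph of a forest''. In (3) it means checking that $n-1\ge 1$ lies in the range of $q$ covered by the hypothesis; small $n$ are handled by hand.
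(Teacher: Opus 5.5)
Your proposal is correct and follows the paper's route. The paper likewise uses Villarreal's theorem to reduce to $G=W(H)$ with $H$ a forest on $n$ vertices, and then reads everything off from Theorem~\ref{main}, Theorem~\ref{thm:dim} and Corollary~\ref{cor:partial-cycle}; your explicit appeal to Theorem~\ref{pure-range} for purity and to Theorem~\ref{cm-chra}(2) for the converse in (3) spells out what the paper's one-line proof leaves implicit.

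On the bookkeeping point you flag, your phrase ``$\dim\cf^1(G)=n$ in the paper's normalization'' is off by one. The hypothesis ``dimension $n$'' must be read as $\dim R/I(G)=n$, i.e.\ $\dim\cf^1(G)=n-1$ in the paper's simplicial convention. By Theorem~\ref{thm:dim} this is exactly what gives $|V(H)|=n$, and hence $\depth(R/I(G)^{[q]})=n+q-1$.
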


\begin{proof}
Let $G$ be a Cohen-Macaulay forest of dimension $n$.
By~\cite[Theorem~2.4]{vill_cohen}, we have $G = W(H)$ for some forest $H$ on $n$ vertices.
The assertions now follow from Theorem~\ref{main}, 
Theorem~\ref{thm:dim}, and 
Corollary~\ref{cor:partial-cycle}.
\end{proof}

\vspace*{1mm}
\noindent
\textbf{Acknowledgments.}  
The authors acknowledge support from the Science and Engineering Research Board (SERB),
and the second author acknowledges support from the National Board for Higher Mathematics (NBHM).

\vspace*{1mm}
\noindent
\textbf{Data availability statement.}  
Data sharing is not applicable to this article as no datasets were generated or analyzed during the current study.

\vspace*{1mm}
\noindent
\textbf{Conflict of interest.}  
The authors declare that they have no known competing financial interests or personal relationships that could have appeared to influence the work reported in this paper.

\bibliographystyle{abbrv}
\bibliography{refs} 

@article{EF26,
author = {Erey, Nursel and Ficarra, Antonino},
title = {Matching powers of monomial ideals and edge ideals of weighted oriented graphs},
journal = {Journal of Algebra and Its Applications},
volume = {0},
number = {0},
pages = {2650118},
year = {0},
doi = {10.1142/S0219498826501185},
}

@ARTICLE{FM24,
       author = {{Ficarra}, Antonino and {Moradi}, Somayeh},
        title = "{Monomial ideals whose all matching powers are Cohen-Macaulay}",
      journal = {arXiv e-prints},
         year = 2024,
        month = oct,
          eid = {arXiv:2410.01666},
        pages = {arXiv:2410.01666},
          doi = {10.48550/arXiv.2410.01666},
archivePrefix = {arXiv},
       eprint = {2410.01666},
 primaryClass = {math.AC},
       adsurl = {https://ui.adsabs.harvard.edu/abs/2024arXiv241001666F}
}

@article {CFE25,
    AUTHOR = {Crupi, Marilena and Ficarra, Antonino and Lax, Ernesto},
     TITLE = {Matchings, squarefree powers, and {B}etti splittings},
   JOURNAL = {Illinois J. Math.},
  FJOURNAL = {Illinois Journal of Mathematics},
    VOLUME = {69},
      YEAR = {2025},
    NUMBER = {2},
     PAGES = {353--372},
      ISSN = {0019-2082,1945-6581},
   MRCLASS = {13C15 (05E40 13A70)},
  MRNUMBER = {4919940},
MRREVIEWER = {Matthew\ Weaver},
       DOI = {10.1215/00192082-11919226},
       URL = {https://doi.org/10.1215/00192082-11919226},
}

@article {EH25,
    AUTHOR = {Erey, Nursel and Hibi, Takayuki},
     TITLE = {Density of linearity index in the interval of matching
              numbers},
   JOURNAL = {J. Algebraic Combin.},
  FJOURNAL = {Journal of Algebraic Combinatorics. An International Journal},
    VOLUME = {61},
      YEAR = {2025},
    NUMBER = {4},
     PAGES = {Paper No. 52, 7},
      ISSN = {0925-9899,1572-9192},
   MRCLASS = {05E40 (05C70 13D02)},
  MRNUMBER = {4921584},
MRREVIEWER = {Jorge\ Neves},
       DOI = {10.1007/s10801-025-01421-7},
       URL = {https://doi.org/10.1007/s10801-025-01421-7},
}

@article {Fa25,
    AUTHOR = {Seyed Fakhari, S. A.},
     TITLE = {Matchings and {C}astelnuovo-{M}umford regularity of squarefree
              powers of edge ideals},
   JOURNAL = {J. Commut. Algebra},
  FJOURNAL = {Journal of Commutative Algebra},
    VOLUME = {17},
      YEAR = {2025},
    NUMBER = {2},
     PAGES = {203--207},
      ISSN = {1939-0807,1939-2346},
   MRCLASS = {13D02 (05E40 13A70)},
  MRNUMBER = {4954445},
       DOI = {10.1216/jca.2025.17.203},
       URL = {https://doi.org/10.1216/jca.2025.17.203},
}

@ARTICLE{DRS25,    
    AUTHOR = {Das, Kanoy and Roy, Amit and Saha, Kamalesh},
     TITLE = {Square-free powers of {C}ohen-{M}acaulay simplicial forests},
   JOURNAL = {Proc. Amer. Math. Soc.},
  FJOURNAL = {Proceedings of the American Mathematical Society},
    VOLUME = {154},
      YEAR = {2026},
    NUMBER = {2},
     PAGES = {549--565},
      ISSN = {0002-9939,1088-6826},
   MRCLASS = {13H10 (05E40 13C15 13F55)},
  MRNUMBER = {5016541},
       DOI = {10.1090/proc/17470},
       URL = {https://doi.org/10.1090/proc/17470},
}

@article {EHHS24,
    AUTHOR = {Erey, Nursel and Herzog, J\"urgen and Hibi, Takayuki and
              Saeedi Madani, Sara},
     TITLE = {The normalized depth function of squarefree powers},
   JOURNAL = {Collect. Math.},
  FJOURNAL = {Collectanea Mathematica},
    VOLUME = {75},
      YEAR = {2024},
    NUMBER = {2},
     PAGES = {409--423},
      ISSN = {0010-0757,2038-4815},
   MRCLASS = {13C15 (05C70 05E40)},
  MRNUMBER = {4724120},
MRREVIEWER = {Yi-Huang\ Shen},
       DOI = {10.1007/s13348-023-00392-x},
       URL = {https://doi.org/10.1007/s13348-023-00392-x},
}

@article {EH21,
    AUTHOR = {Erey, Nursel and Hibi, Takayuki},
     TITLE = {Squarefree powers of edge ideals of forests},
   JOURNAL = {Electron. J. Combin.},
  FJOURNAL = {Electronic Journal of Combinatorics},
    VOLUME = {28},
      YEAR = {2021},
    NUMBER = {2},
     PAGES = {Paper No. 2.32, 16},
      ISSN = {1077-8926},
   MRCLASS = {05E40 (05C05 13D02)},
  MRNUMBER = {4272721},
MRREVIEWER = {Mehrdad\ Nasernejad},
       DOI = {10.37236/10038},
       URL = {https://doi.org/10.37236/10038},
}

@article {CF26,
    AUTHOR = {Crupi, Marilena and Ficarra, Antonino},
     TITLE = {Edge ideals whose all matching powers are
              bi-{C}ohen-{M}acaulay},
   JOURNAL = {Comm. Algebra},
  FJOURNAL = {Communications in Algebra},
    VOLUME = {54},
      YEAR = {2026},
    NUMBER = {2},
     PAGES = {661--668},
      ISSN = {0092-7872,1532-4125},
   MRCLASS = {13C14 (05E40 13C05 13C15)},
  MRNUMBER = {4996679},
       DOI = {10.1080/00927872.2025.2537272},
       URL = {https://doi.org/10.1080/00927872.2025.2537272},
}

@article {HS25,
    AUTHOR = {Hibi, Takayuki and Seyed Fakhari, Seyed Amin},
     TITLE = {Bounded powers of edge ideals: regularity and linear
              quotients},
   JOURNAL = {Proc. Amer. Math. Soc.},
  FJOURNAL = {Proceedings of the American Mathematical Society},
    VOLUME = {153},
      YEAR = {2025},
    NUMBER = {11},
     PAGES = {4619--4631},
      ISSN = {0002-9939,1088-6826},
   MRCLASS = {13D02 (05E40)},
  MRNUMBER = {4971559},
       DOI = {10.1090/proc/17376},
       URL = {https://doi.org/10.1090/proc/17376},
}

@article {TT12,
    AUTHOR = {Terai, Naoki and Trung, Ngo Viet},
     TITLE = {Cohen-{M}acaulayness of large powers of {S}tanley-{R}eisner
              ideals},
   JOURNAL = {Adv. Math.},
  FJOURNAL = {Advances in Mathematics},
    VOLUME = {229},
      YEAR = {2012},
    NUMBER = {2},
     PAGES = {711--730},
      ISSN = {0001-8708,1090-2082},
   MRCLASS = {13F55 (13H10)},
  MRNUMBER = {2855076},
MRREVIEWER = {Amir\ Mafi},
       DOI = {10.1016/j.aim.2011.10.004},
       URL = {https://doi.org/10.1016/j.aim.2011.10.004},
}

@article {CN76,
    AUTHOR = {Cowsik, R. C. and Nori, M. V.},
     TITLE = {On the fibres of blowing up},
   JOURNAL = {J. Indian Math. Soc. (N.S.)},
  FJOURNAL = {The Journal of the Indian Mathematical Society. New Series},
    VOLUME = {40},
      YEAR = {1976},
    NUMBER = {1-4},
     PAGES = {217--222},
      ISSN = {0019-5839,2455-6475},
   MRCLASS = {14M10},
  MRNUMBER = {572990},
}

@book {BH93,
    AUTHOR = {Bruns, Winfried and Herzog, J\"urgen},
     TITLE = {Cohen-{M}acaulay rings},
    SERIES = {Cambridge Studies in Advanced Mathematics},
    VOLUME = {39},
 PUBLISHER = {Cambridge University Press, Cambridge},
      YEAR = {1993},
     PAGES = {xii+403},
      ISBN = {0-521-41068-1},
   MRCLASS = {13H10 (13-02)},
  MRNUMBER = {1251956},
MRREVIEWER = {Matthew\ Miller},
}

@article {BW97,
    AUTHOR = {Bj\"orner, Anders and Wachs, Michelle L.},
     TITLE = {Shellable nonpure complexes and posets. {II}},
   JOURNAL = {Trans. Amer. Math. Soc.},
  FJOURNAL = {Transactions of the American Mathematical Society},
    VOLUME = {349},
      YEAR = {1997},
    NUMBER = {10},
     PAGES = {3945--3975},
      ISSN = {0002-9947,1088-6850},
   MRCLASS = {06A08 (05E99)},
  MRNUMBER = {1401765},
MRREVIEWER = {Volkmar\ Welker},
       DOI = {10.1090/S0002-9947-97-01838-2},
       URL = {https://doi.org/10.1090/S0002-9947-97-01838-2},
}

@article {BW96,
    AUTHOR = {Bj\"orner, Anders and Wachs, Michelle L.},
     TITLE = {Shellable nonpure complexes and posets. {I}},
   JOURNAL = {Trans. Amer. Math. Soc.},
  FJOURNAL = {Transactions of the American Mathematical Society},
    VOLUME = {348},
      YEAR = {1996},
    NUMBER = {4},
     PAGES = {1299--1327},
      ISSN = {0002-9947,1088-6850},
   MRCLASS = {06A08 (05E99 52B99)},
  MRNUMBER = {1333388},
MRREVIEWER = {T.\ S.\ Blyth},
       DOI = {10.1090/S0002-9947-96-01534-6},
       URL = {https://doi.org/10.1090/S0002-9947-96-01534-6},
}

@article {EHHM22,
    AUTHOR = {Erey, Nursel and Herzog, J\"urgen and Hibi, Takayuki and
              Saeedi Madani, Sara},
     TITLE = {Matchings and squarefree powers of edge ideals},
   JOURNAL = {J. Combin. Theory Ser. A},
  FJOURNAL = {Journal of Combinatorial Theory. Series A},
    VOLUME = {188},
      YEAR = {2022},
     PAGES = {Paper No. 105585, 24},
      ISSN = {0097-3165,1096-0899},
   MRCLASS = {05E40 (13A70)},
  MRNUMBER = {4364935},
MRREVIEWER = {Mehrdad\ Nasernejad},
       DOI = {10.1016/j.jcta.2021.105585},
       URL = {https://doi.org/10.1016/j.jcta.2021.105585},
}

@ARTICLE{Faridi13,
       author = {{Faridi}, Sara},
        title = "{The projective dimension of sequentially Cohen-Macaulay monomial ideals}",
      journal = {arXiv e-prints},
         year = 2013,
        month = oct,
          eid = {arXiv:1310.5598},
        pages = {arXiv:1310.5598},
          doi = {10.48550/arXiv.1310.5598},
archivePrefix = {arXiv},
       eprint = {1310.5598},
 primaryClass = {math.AC},
       adsurl = {https://ui.adsabs.harvard.edu/abs/2013arXiv1310.5598F}
}

@article {FHH23,
    AUTHOR = {Ficarra, Antonino and Herzog, J\"urgen and Hibi, Takayuki},
     TITLE = {Behaviour of the normalized depth function},
   JOURNAL = {Electron. J. Combin.},
  FJOURNAL = {Electronic Journal of Combinatorics},
    VOLUME = {30},
      YEAR = {2023},
    NUMBER = {2},
     PAGES = {Paper No. 2.31, 16},
      ISSN = {1077-8926},
   MRCLASS = {13C15 (05C70 05E40)},
  MRNUMBER = {4596339},
MRREVIEWER = {Nursel\ Erey},
       DOI = {10.37236/11611},
       URL = {https://doi.org/10.37236/11611},
}

@ARTICLE{DRS24,
       author = {{Das}, Kanoy Kumar and {Roy}, Amit and {Saha}, Kamalesh},
        title = "{Square-free powers of Cohen-Macaulay forests, cycles, and whiskered cycles}",
      journal = {arXiv e-prints},
         year = 2024,
        month = sep,
          eid = {arXiv:2409.06021},
        pages = {arXiv:2409.06021},
          doi = {10.48550/arXiv.2409.06021},
archivePrefix = {arXiv},
       eprint = {2409.06021},
 primaryClass = {math.AC},
       adsurl = {https://ui.adsabs.harvard.edu/abs/2024arXiv240906021D}
}

@article {Sey24,
    AUTHOR = {Seyed Fakhari, S. A.},
     TITLE = {On the {C}astelnuovo-{M}umford regularity of squarefree powers
              of edge ideals},
   JOURNAL = {J. Pure Appl. Algebra},
  FJOURNAL = {Journal of Pure and Applied Algebra},
    VOLUME = {228},
      YEAR = {2024},
    NUMBER = {3},
     PAGES = {Paper No. 107488, 12},
      ISSN = {0022-4049,1873-1376},
   MRCLASS = {13D02 (05C70 05E40)},
  MRNUMBER = {4624455},
MRREVIEWER = {Jorge\ Neves},
       DOI = {10.1016/j.jpaa.2023.107488},
       URL = {https://doi.org/10.1016/j.jpaa.2023.107488},
}

@article {DochEng09,
    AUTHOR = {Dochtermann, Anton and Engstr\"om, Alexander},
     TITLE = {Algebraic properties of edge ideals via combinatorial
              topology},
   JOURNAL = {Electron. J. Combin.},
  FJOURNAL = {Electronic Journal of Combinatorics},
    VOLUME = {16},
      YEAR = {2009},
    NUMBER = {2, Special volume in honor of Anders Bj\"orner},
     PAGES = {Research Paper 2, 24},
      ISSN = {1077-8926},
   MRCLASS = {13F55 (05C10 13D02)},
  MRNUMBER = {2515765},
MRREVIEWER = {Christopher A. Francisco},
       URL = {http://www.combinatorics.org/Volume_16/Abstracts/v16i2r2.html},
}

@article {HH04,
    AUTHOR = {Herzog, J\"urgen and Hibi, Takayuki and Zheng, Xinxian},
     TITLE = {Dirac's theorem on chordal graphs and {A}lexander duality},
   JOURNAL = {European J. Combin.},
  FJOURNAL = {European Journal of Combinatorics},
    VOLUME = {25},
      YEAR = {2004},
    NUMBER = {7},
     PAGES = {949--960},
      ISSN = {0195-6698},
   MRCLASS = {05C38 (13F55 52B20)},
  MRNUMBER = {2083448},
MRREVIEWER = {Andrew Vince},
       URL = {https://doi.org/10.1016/j.ejc.2003.12.008},
}

@article {VanVilla,
    AUTHOR = {Van Tuyl, Adam and Villarreal, Rafael H.},
     TITLE = {Shellable graphs and sequentially {C}ohen-{M}acaulay bipartite
              graphs},
   JOURNAL = {J. Combin. Theory Ser. A},
  FJOURNAL = {Journal of Combinatorial Theory. Series A},
    VOLUME = {115},
      YEAR = {2008},
    NUMBER = {5},
     PAGES = {799--814},
      ISSN = {0097-3165},
   MRCLASS = {13F55 (05C38 05C75 05E99)},
  MRNUMBER = {2417022},
MRREVIEWER = {Siamak Yassemi},
       URL = {https://doi.org/10.1016/j.jcta.2007.11.001},
}

@article {banerjee,
  AUTHOR = {Banerjee, Arindam},
     TITLE = {The regularity of powers of edge ideals},
   JOURNAL = {J. Algebraic Combin.},
  FJOURNAL = {Journal of Algebraic Combinatorics. An International Journal},
    VOLUME = {41},
      YEAR = {2015},
    NUMBER = {2},
     PAGES = {303--321},
   MRCLASS = {13F20 (05C10 13D02)},
  MRNUMBER = {3306074},
MRREVIEWER = {Adam L. Van Tuyl},
}

@article {vill_cohen,
    AUTHOR = {Villarreal, Rafael H.},
     TITLE = {Cohen-{M}acaulay graphs},
   JOURNAL = {Manuscripta Math.},
  FJOURNAL = {Manuscripta Mathematica},
    VOLUME = {66},
      YEAR = {1990},
    NUMBER = {3},
     PAGES = {277--293},
     CODEN = {MSMHB2},
   MRCLASS = {13H10 (05C05 05E25 52B20)},
  MRNUMBER = {1031197},
MRREVIEWER = {Aron Simis},
}

@article {BHZ18,
    AUTHOR = {Bigdeli, Mina and Herzog, J\"urgen and Zaare-Nahandi, Rashid},
     TITLE = {On the index of powers of edge ideals},
   JOURNAL = {Comm. Algebra},
  FJOURNAL = {Communications in Algebra},
    VOLUME = {46},
      YEAR = {2018},
    NUMBER = {3},
     PAGES = {1080--1095},
      ISSN = {0092-7872,1532-4125},
   MRCLASS = {13D02 (05E40 13C13)},
  MRNUMBER = {3780221},
MRREVIEWER = {Siamak\ Yassemi},
       DOI = {10.1080/00927872.2017.1339058},
       URL = {https://doi.org/10.1080/00927872.2017.1339058},
}

@incollection {froberg,
AUTHOR = {Fr{\"o}berg, Ralf},
     TITLE = {On {S}tanley-{R}eisner rings},
 BOOKTITLE = {Topics in algebra, {P}art 2 ({W}arsaw, 1988)},
    SERIES = {Banach Center Publ.},
    VOLUME = {26},
     PAGES = {57--70},
 PUBLISHER = {PWN, Warsaw},
      YEAR = {1990},
   MRCLASS = {13D25 (13H10)},
  MRNUMBER = {1171260},
}

@article {ProvLouis,
    AUTHOR = {Provan, J. Scott and Billera, Louis J.},
     TITLE = {Decompositions of simplicial complexes related to diameters of
              convex polyhedra},
   JOURNAL = {Math. Oper. Res.},
  FJOURNAL = {Mathematics of Operations Research},
    VOLUME = {5},
      YEAR = {1980},
    NUMBER = {4},
     PAGES = {576--594},
      ISSN = {0364-765X},
   MRCLASS = {52A25 (90C05)},
  MRNUMBER = {593648},
MRREVIEWER = {J. Parida},
       DOI = {10.1287/moor.5.4.576},
       URL = {https://doi.org/10.1287/moor.5.4.576},
}

@article {J05,
    AUTHOR = {Jonsson, Jakob},
     TITLE = {Optimal decision trees on simplicial complexes},
   JOURNAL = {Electron. J. Combin.},
  FJOURNAL = {Electronic Journal of Combinatorics},
    VOLUME = {12},
      YEAR = {2005},
     PAGES = {Research Paper 3, 31},
      ISSN = {1077-8926},
   MRCLASS = {05E25 (06A11 55U10)},
  MRNUMBER = {2134166},
MRREVIEWER = {Patricia\ L.\ Hersh},
       DOI = {10.37236/1900},
       URL = {https://doi.org/10.37236/1900},
}

@book {J08,
    AUTHOR = {Jonsson, Jakob},
     TITLE = {Simplicial complexes of graphs},
    SERIES = {Lecture Notes in Mathematics},
    VOLUME = {1928},
 PUBLISHER = {Springer-Verlag, Berlin},
      YEAR = {2008},
     PAGES = {xiv+378},
      ISBN = {978-3-540-75858-7},
   MRCLASS = {05C10 (05C20 05C40 05E30 06A11 55U10)},
  MRNUMBER = {2368284},
       DOI = {10.1007/978-3-540-75859-4},
       URL = {https://doi.org/10.1007/978-3-540-75859-4},
}

@article {Russ11,
    AUTHOR = {Woodroofe, Russ},
     TITLE = {Chordal and sequentially {C}ohen-{M}acaulay clutters},
   JOURNAL = {Electron. J. Combin.},
  FJOURNAL = {Electronic Journal of Combinatorics},
    VOLUME = {18},
      YEAR = {2011},
    NUMBER = {1},
     PAGES = {Paper 208, 20},
      ISSN = {1077-8926},
   MRCLASS = {05E45 (05B35 05C65 13D02 13F55)},
  MRNUMBER = {2853065},
MRREVIEWER = {Seyed Amin Seyed Fakhari},
}

@article {FH,
    AUTHOR = {Francisco, Christopher A. and H{\`a}, Huy T{\`a}i},
     TITLE = {Whiskers and sequentially {C}ohen-{M}acaulay graphs},
   JOURNAL = {J. Combin. Theory Ser. A},
  FJOURNAL = {Journal of Combinatorial Theory. Series A},
    VOLUME = {115},
      YEAR = {2008},
    NUMBER = {2},
     PAGES = {304--316},
     CODEN = {JCBTA7},
   MRCLASS = {13F55 (05C62)},
  MRNUMBER = {2382518},
MRREVIEWER = {Adam L. Van Tuyl},
}

@book {west,
    AUTHOR = {West, Douglas B.},
     TITLE = {Introduction to graph theory},
 PUBLISHER = {Prentice Hall, Inc., Upper Saddle River, NJ},
      YEAR = {1996},
     PAGES = {xvi+512},
   MRCLASS = {05-01},
  MRNUMBER = {1367739},
}

@article {eagon,
    AUTHOR = {Eagon, John A. and Reiner, Victor},
     TITLE = {Resolutions of {S}tanley-{R}eisner rings and {A}lexander
              duality},
   JOURNAL = {J. Pure Appl. Algebra},
  FJOURNAL = {Journal of Pure and Applied Algebra},
    VOLUME = {130},
      YEAR = {1998},
    NUMBER = {3},
     PAGES = {265--275},
    CODEN = {JPAAA2},
    MRCLASS = {13D25 (13D02 13H10)},
    MRNUMBER = {1633767},
    MRREVIEWER = {Ralf Fr{\"o}berg},
 }
\end{document}